\newtheorem{prop}[subsection]{Proposition}
\newtheorem{conj}[subsection]{Conjecture}
\newtheorem{teor}[subsection]{Theorem}
\newtheorem{lema}[subsection]{Lemma}
\newtheorem{cor} [subsection]{Corollary}
\theoremstyle{definition}
\newtheorem{dfn} [subsection]{Definition}
\theoremstyle{remark}
\newtheorem{obs} [subsection]{Remark}
\newcommand{\Zng}{$\mathbb Z^n$-graded $S$-module}
\newcommand{\me}{\mathfrak m}
\newcommand{\oS}{\overline S}
\newcommand{\St}{\mathcal S}
\def\sdepth{\operatorname{sdepth}}
\def\hdepth{\operatorname{hdepth}}
\def\depth{\operatorname{depth}}
\def\supp{\operatorname{supp}}
\def\deg{\operatorname{deg}}
\def\Ass{\operatorname{Ass}}
\numberwithin{equation}{section}
\begin{document}

\title[On the sdepth and hdepth of some classes of edge ideals of graphs]{On the Stanley depth and Hilbert depth of some classes of edge ideals of graphs}
\author[Andreea I.\ Bordianu,  Mircea Cimpoea\c s]
  {Andreea I.\ Bordianu$^1$ and Mircea Cimpoea\c s$^2$}
\date{}

\keywords{Stanley depth, Hilbert depth, Edge ideal, Path graph, Cycle graph}

\subjclass[2020]{05A18, 06A07, 13C15, 13P10, 13F20}

\footnotetext[1]{ \emph{Andreea I.\ Bordianu}, National University of Science and Technology Politehnica Bucharest, Faculty of
Applied Sciences, 
Bucharest, 060042, E-mail: andreea.bordianu@stud.fsa.upb.ro}
\footnotetext[2]{ \emph{Mircea Cimpoea\c s}, National University of Science and Technology Politehnica Bucharest, Faculty of
Applied Sciences, 
Bucharest, 060042, Romania and Simion Stoilow Institute of Mathematics, Research unit 5, P.O.Box 1-764,
Bucharest 014700, Romania, E-mail: mircea.cimpoeas@upb.ro,\;mircea.cimpoeas@imar.ro}

\begin{abstract}
We study the Stanley depth and the Hilbert depth of the edge ideals of path graphs, cycle graphs, 
generalized star graphs and double broom graphs.

\end{abstract}

\maketitle

\section{Introduction}

Let $K$ be a field and $S=K[x_1,\ldots,x_n]$ the polynomial ring over $K$.
Let $M$ be a \Zng. A \emph{Stanley decomposition} of $M$ is a direct sum $\mathcal D: M = \bigoplus_{i=1}^rm_i K[Z_i]$ as a 
$\mathbb Z^n$-graded $K$-vector space, where $m_i\in M$ is homogeneous with respect to $\mathbb Z^n$-grading, 
$Z_i\subset\{x_1,\ldots,x_n\}$ such that $m_i K[Z_i] = \{um_i:\; u\in K[Z_i] \}\subset M$ is a free $K[Z_i]$-submodule of $M$. 
We define $\sdepth(\mathcal D)=\min_{i=1,\ldots,r} |Z_i|$ and $$\sdepth(M)=\max\{\sdepth(\mathcal D)|\;\mathcal D\text{ is 
a Stanley decomposition of }M\}.$$ The number $\sdepth(M)$ is called the \emph{Stanley depth} of $M$. 

Herzog, Vladoiu and Zheng show in \cite{hvz} that $\sdepth(M)$ can be computed in a finite number of steps if $M=I/J$, 
where $J\subset I\subset S$ are monomial ideals. 
In \cite{apel}, J.\ Apel restated a conjecture firstly given by Stanley in 
\cite{stan}, namely that $$\sdepth(M)\geq\depth(M),$$ for any \Zng $\;M$. This conjecture proves to be false, in general, for 
$M=S/I$ and $M=J/I$, where $0\neq I\subset J\subset S$ are monomial ideals, see \cite{duval}, but remains open for $M=I$.


In \cite{uli}, Uliczka introduced a new invariant associated to a finitely generated graded $S$-module $M$, called \emph{Hilbert depth}, and
denoted by $\hdepth(M)$, which gives a natural upper bound for $\sdepth(M)$. The Hilbert depth of $M$ is the maximal depth of a finitely
graded $S$-module $N$ with the same Hilbert series as $M$. Another equivalent characterization of the Hilbert depth is
$$\hdepth(M)=\max\{r\;:\;(1-t)^rH_M(t)\text{ is positive}\}.$$
For further details regarding the Stanley depth and Hilbert depth we refer the reader to \cite{bruns} and \cite{her}.

Using the last aforementioned characterization for Hilbert depth, in \cite[Theorem 2.4]{lucrare2} we proved that if 
$0\subset I\subsetneq J\subset S$ are two squarefree monomial ideals, then:
$$\hdepth(J/I)=\max\{q\;:\;\beta_k^q(J/I)=\sum_{j=0}^k (-1)^{k-j}\binom{q-j}{k-j}\alpha_j(J/I)\geq 0\text{ for all }0\leq k\leq q\},$$
where $\alpha_j(J/I)$ is the number of squarefree monomials of degree $j$ in $J\setminus I$, for $0\leq j\leq n$. 
Our aim is to use this combinatorial characterization, in order to study the Hilbert depth of edge ideals for several classes of graphs.

Let $I_n=(x_1x_2,x_2x_3,\ldots,x_{n-1}x_n)\subset S$ be the edge ideal of a path graph of length $n-1$.
In Theorem \ref{cory} we give combinatorial formulas for $\hdepth(I_n)$ and $\hdepth(S/I_n)$. Moreover, we strongly believe that
$$\hdepth(I_n)\geq \left\lfloor \frac{2n+1}{3} \right\rfloor.$$
See Conjecture \ref{conj} and Remark \ref{obsy}.

Let $J_n=I_n+(x_nx_1)\subset S$ be the edge ideal of cycle graph of length $n$. 
In Theorem \ref{cory2} we give combinatorial formulas for $\hdepth(J_n)$ and $\hdepth(S/J_n)$.
Also, we conjecture that \small
$$\hdepth(I_n)-\hdepth(J_n),\;\hdepth(S/I_n)-\hdepth(S/J_n) \in \{0,1\}\text{ and }\hdepth(J_n)\geq \left\lfloor \frac{2n+1}{3} \right\rfloor.$$
\normalsize See Conjecture \ref{conj2}. Moreover, we believe that $\hdepth(I_n)=\hdepth(J_n)$ with a probability of $\frac{2}{3}$, while
$\hdepth(S/I_n)-\hdepth(S/J_n)$ with a probability of $\frac{5}{6}$; see Conjecture \ref{conj3}.
In Theorem \ref{cory3} we show that 
$$\hdepth(J_n/I_n)=\hdepth(K[x_1,\ldots,x_{n-4}]/I_n)+2\text{ for all }n\geq 6.$$
Note that similar formulas hold for $\depth$ and $\sdepth$; see \cite[Proposition 1.10]{mir2}.

In the last section we present several applications of our results. Let $I\subset S$ be the edge ideal of a $(k,n_1,\ldots,n_k)$ generalized star graph; 
see Definition \ref{gsg} for details. In Theorem \ref{t4} we give an upper and a lower bound for $\sdepth(S/I)$. In particular, we note that 
$$\hdepth(S/I)\geq \sdepth(S/I) \geq \left\lceil \frac{n_1+\cdots+n_k+k}{2} \right \rceil.$$
Let $I\subset S$ be the edge ideal of the double broom graph $P(n_1,n,n_2)$; see Definition \ref{dbg}. In Theorem \ref{teor2} we prove that
$$ \hdepth(S/I)\geq \sdepth(S/I)=\depth(S/I)=2+  \left\lceil \frac{n-2}{3} \right \rceil.$$
Also, in Proposition \ref{p10} we prove that 
$$ \hdepth(I)\geq \sdepth(I) \geq \left\lceil \frac{n_1+n_2+n+1}{2} \right \rceil.$$
Finally, in Proposition \ref{p12} we compute $\beta_k^q(I)$ and $\beta_k^q(S/I)$ in the case $n=2$.

\pagebreak

\section{Preliminaries}

First, we recall the well known Depth Lemma, see for instance \cite[Lemma 2.3.9]{real}. 

\begin{lema}\label{lem1}(Depth Lemma)
If $0 \rightarrow U \rightarrow M \rightarrow N \rightarrow 0$ is a short exact sequence of modules over a local ring $S$, 
or a Noetherian graded ring with $S_0$ local, then
\begin{enumerate}
\item[(1)] $\depth M \geq \min\{\depth N,\depth U\}$.
\item[(2)] $\depth U \geq \min\{\depth M,\depth N +1\}$.
\item[(3)] $\depth N \geq \min\{\depth U-1,\depth M\}$.
\end{enumerate}
\end{lema}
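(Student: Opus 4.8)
The plan is to deduce all three statements from a single long exact sequence, so that the three parts become visibly parallel. First I would invoke the cohomological description of depth: over a Noetherian local ring $(S,\me)$, or a graded ring with $S_0$ local and $\me$ the graded maximal ideal, one has
$$\depth M=\min\{i:\;H^i_{\me}(M)\neq 0\}$$
for every finitely generated $M$, with the convention $\depth 0=+\infty$ (equivalently one may use $\depth M=\min\{i:\;\operatorname{Ext}^i_S(S/\me,M)\neq 0\}$; either works verbatim below). Applying $H^{\bullet}_{\me}(-)$ to the short exact sequence $0\to U\to M\to N\to 0$ produces the long exact sequence
$$\cdots\to H^{i-1}_{\me}(N)\to H^i_{\me}(U)\to H^i_{\me}(M)\to H^i_{\me}(N)\to H^{i+1}_{\me}(U)\to\cdots.$$

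Each inequality is then a one-line vanishing argument. For (1), if $i<\min\{\depth N,\depth U\}$ then $H^i_{\me}(U)=H^i_{\me}(N)=0$, and exactness of $H^i_{\me}(U)\to H^i_{\me}(M)\to H^i_{\me}(N)$ forces $H^i_{\me}(M)=0$, whence $\depth M\ge\min\{\depth N,\depth U\}$. For (2), if $i<\min\{\depth M,\depth N+1\}$ then $H^i_{\me}(M)=0$ and $H^{i-1}_{\me}(N)=0$, so exactness of $H^{i-1}_{\me}(N)\to H^i_{\me}(U)\to H^i_{\me}(M)$ gives $H^i_{\me}(U)=0$, whence $\depth U\ge\min\{\depth M,\depth N+1\}$. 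For (3), if $i<\min\{\depth U-1,\depth M\}$ then $H^i_{\me}(M)=0$ and $H^{i+1}_{\me}(U)=0$, so exactness of $H^i_{\me}(M)\to H^i_{\me}(N)\to H^{i+1}_{\me}(U)$ gives $H^i_{\me}(N)=0$, whence $\depth N\ge\min\{\depth U-1,\depth M\}$.

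There is no serious obstacle here; the only point requiring care is the bookkeeping in the degenerate cases, where one of $U,M,N$ is the zero module and hence has depth $+\infty$. Under the convention $\depth 0=+\infty$ the stated minima are controlled by the finite entries and the long exact sequence degenerates accordingly, so all three inequalities persist. If one prefers to avoid local cohomology, the same statements admit an induction on $\depth M+\depth N$ using a common $M$-, $U$- and $N$-regular element $x\in\me$ (such an $x$ exists by prime avoidance when none of the depths is $0$): one passes to $0\to U/xU\to M/xM\to N/xN\to 0$ over $S/xS$ and uses $\depth_{S/xS}(-/x(-))=\depth_S(-)-1$, the base case $\depth=0$ being the criterion $\depth M=0\iff \me\in\Ass(M)$. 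I would present the cohomological argument as the main line, since it is shorter and treats (1)--(3) uniformly.
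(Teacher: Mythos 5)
Your proof is correct and is essentially the standard argument behind the cited result (Villarreal, Lemma~2.3.9; cf.\ also Bruns--Herzog, Proposition~1.2.9): the paper itself gives no proof, only the reference, and the reference's proof is exactly this long exact sequence in local cohomology (or $\operatorname{Ext}^{\bullet}_S(S/\me,-)$) combined with the vanishing characterization of depth. All three case analyses check out, so nothing further is needed.
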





In \cite{asia}, A.\ Rauf proved the following analog of Depth Lemma:

\begin{lema}\label{asia}(Sdepth Lemma)
If $0 \rightarrow U \rightarrow M \rightarrow N \rightarrow 0$ is a short exact sequence of finitely generated $\mathbb Z^n$-graded $S$-modules, then
$$\sdepth(M) \geq \min\{\sdepth(U),\sdepth(N) \}.$$
\end{lema}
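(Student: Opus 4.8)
The plan is to construct an optimal Stanley decomposition of $M$ by splitting the short exact sequence at the level of $\mathbb Z^n$-graded $K$-vector spaces and transporting into $M$ the best Stanley decompositions of $U$ and $N$. Write the maps as $\iota\colon U\hookrightarrow M$ and $\pi\colon M\twoheadrightarrow N$; both are homogeneous, $\iota$ is injective, $\pi$ is surjective, and $\ker\pi=\iota(U)$.

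First I would fix a Stanley decomposition $\mathcal D_N\colon N=\bigoplus_{j} n_j K[Z_j]$ with $\sdepth(\mathcal D_N)=\sdepth(N)$, and for each homogeneous generator $n_j$ choose a homogeneous preimage $\widetilde n_j\in M$ with $\pi(\widetilde n_j)=n_j$; this is possible because $\pi$ is a surjective graded map. Set $M'=\sum_j \widetilde n_j K[Z_j]\subseteq M$. The key verification is threefold: (i) each $\widetilde n_j K[Z_j]$ is a free $K[Z_j]$-module, since $u\widetilde n_j=0$ for $u\in K[Z_j]$ forces $u n_j=\pi(u\widetilde n_j)=0$, hence $u=0$ by freeness of $n_j K[Z_j]$; (ii) the sum defining $M'$ is direct, because applying $\pi$ to a relation $\sum_j u_j\widetilde n_j=0$ gives $\sum_j u_j n_j=0$, so each $u_j n_j=0$ and thus $u_j=0$; (iii) $M=M'\oplus\iota(U)$ as $\mathbb Z^n$-graded $K$-vector spaces, since $\pi(M')=\sum_j n_j K[Z_j]=N$ implies every $m\in M$ satisfies $m-m'\in\ker\pi=\iota(U)$ for some $m'\in M'$, while $M'\cap\iota(U)=0$ follows exactly as in (ii) because any element of this intersection lies in $\ker\pi$. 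Everything is grading-compatible since all maps and all chosen generators are homogeneous, so $M'=\bigoplus_j \widetilde n_j K[Z_j]$ is a Stanley decomposition of $M'$ with $\sdepth\ge\sdepth(\mathcal D_N)$.

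Next I would take a Stanley decomposition $\mathcal D_U\colon U=\bigoplus_i m_i K[Z_i]$ with $\sdepth(\mathcal D_U)=\sdepth(U)$; since $\iota$ is an injective $S$-module homomorphism, $\iota(U)=\bigoplus_i \iota(m_i)K[Z_i]$ is a Stanley decomposition of $\iota(U)$ of the same Stanley depth. Combining this with (iii),
$$M=\Big(\bigoplus_j \widetilde n_j K[Z_j]\Big)\oplus\Big(\bigoplus_i \iota(m_i)K[Z_i]\Big)$$
is a Stanley decomposition of $M$ with $\sdepth\ge\min\{\sdepth(\mathcal D_N),\sdepth(\mathcal D_U)\}=\min\{\sdepth(U),\sdepth(N)\}$, which gives the claim.

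The only genuinely delicate point is step (iii): one must check that the submodule $M'$ generated by the lifted generators maps onto $N$ and meets $\iota(U)$ trivially, so that $M$ really is the internal direct sum $M'\oplus\iota(U)$ as graded vector spaces; everything else is a formal consequence of surjectivity of $\pi$, injectivity of $\iota$, and the freeness built into the two decompositions. Note that one neither needs nor can expect an $S$-linear splitting — working over $K$ with a graded vector space direct sum is precisely what makes the argument succeed, and it also explains why no reverse inequality or analogue of parts (2) and (3) of the Depth Lemma is asserted here.
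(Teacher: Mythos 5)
Your proof is correct. The paper does not prove this lemma itself --- it simply quotes it from A.\ Rauf, \emph{Depth and Stanley depth of multigraded modules} --- and your argument (lift a Stanley decomposition of $N$ through the surjection by choosing homogeneous preimages, push a Stanley decomposition of $U$ forward through the injection, and verify that the two together give a graded $K$-vector space direct sum decomposition of $M$) is precisely the standard proof found in that reference, with all the delicate points, in particular the verification that $M=M'\oplus\iota(U)$, handled correctly.
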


A similar result holds for Hilbert depth, see for instance \cite[Corollary 3.3]{uli}:

\begin{lema}\label{uli}(Hdepth Lemma)
If $0 \rightarrow U \rightarrow M \rightarrow N \rightarrow 0$ is a short exact sequence of finitely generated graded $S$-modules, then
$$\hdepth(M) \geq \min\{\hdepth(U),\hdepth(N) \}.$$
\end{lema}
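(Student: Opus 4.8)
The plan is to deduce the statement from the power-series characterization $\hdepth(M)=\max\{r:(1-t)^rH_M(t)\text{ is positive}\}$ recalled above, combined with the additivity of Hilbert series along short exact sequences. Since the sequence $0\to U_i\to M_i\to N_i\to 0$ of $K$-vector spaces is exact in every degree $i$, taking dimensions gives $H_M(t)=H_U(t)+H_N(t)$.

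First I would record the monotonicity of the defining property: a formal power series with nonnegative coefficients stays such after multiplication by $\frac{1}{1-t}=1+t+t^2+\cdots$, hence by $\frac{1}{(1-t)^{s-s'}}$ for any $s'\le s$; therefore, if $(1-t)^sH(t)$ is positive then so is $(1-t)^{s'}H(t)=\frac{1}{(1-t)^{s-s'}}(1-t)^sH(t)$. In particular, for any module the property ``$(1-t)^rH(t)$ is positive'' holds for every $r\le\hdepth$ of that module. Now put $r=\min\{\hdepth(U),\hdepth(N)\}$. Applying the previous remark to $U$ and to $N$ shows that both $(1-t)^rH_U(t)$ and $(1-t)^rH_N(t)$ are positive, so their sum $(1-t)^rH_M(t)=(1-t)^rH_U(t)+(1-t)^rH_N(t)$ has nonnegative coefficients, i.e.\ it is positive. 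By the characterization, $\hdepth(M)\ge r=\min\{\hdepth(U),\hdepth(N)\}$, as claimed.

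There is no genuine obstacle here: the whole argument rests on the fact that the set of formal power series with nonnegative coefficients is closed under addition and under multiplication by $(1-t)^{-1}$, both of which are immediate. Alternatively one could mimic the proof of the Depth Lemma by constructing a finitely generated graded module of depth at least $r$ with Hilbert series $H_M$ out of the corresponding modules for $U$ and $N$, but this is more cumbersome and unnecessary given the clean numerical characterization of $\hdepth$.
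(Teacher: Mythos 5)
Your argument is correct. Note that the paper itself gives no proof of this lemma---it is quoted from Uliczka's paper (\cite[Corollary 3.3]{uli})---and your derivation is exactly the standard one there: additivity of Hilbert series along a degreewise-exact sequence, plus the observation that the set of (Laurent) series with nonnegative coefficients is closed under addition and under multiplication by $(1-t)^{-1}=1+t+t^2+\cdots$, which makes the positivity condition downward closed in $r$. Nothing is missing.
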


We also recall the following well known results: See for instance \cite[Corollary 1.3]{asia}, \cite[Proposition 2.7]{mirci},
\cite[Theorem 1.1]{mir}, \cite[Lemma 3.6]{hvz} and \cite[Corollary 3.3]{asia}.

\begin{lema}\label{lem}
Let $I\subset S$ be a monomial ideal and let $u\in S$ a monomial which is not in $I$. We have that:
\begin{enumerate}
\item[(1)] $\sdepth(S/(I:u))\geq \sdepth(S/I)$.
\item[(2)] $\sdepth(I:u)\geq \sdepth(I)$.
\item[(3)] $\depth(S/(I:u))\geq \depth(S/I)$.
\item[(4)] If $I=u(I:u)$, then:
  \begin{enumerate}
  \item $\sdepth(S/(I:u))=\sdepth(S/I)$.
	\item $\depth(S/(I:u))=\depth(S/I)$.
	\item $\sdepth(I:u)=\sdepth(I)$.
  \end{enumerate}
\item[(5)] If $u$ is regular on $S/I$, then:
  \begin{enumerate}
	\item $\sdepth(S/(I,u))=\sdepth(S/I)-1$.
  \item $\depth(S/(I,u))=\depth(S/I)-1$.
  \end{enumerate}
\item[(6)] If $S'=S[x_{n+1}]$, then:
\begin{enumerate}
\item $\sdepth_{S'}(S'/IS')=\sdepth_S(S/I)+1$, $\sdepth_{S'}(IS')=\sdepth_S(I)+1$.
\item $\depth_{S'}(S'/IS')=\depth_S(S/I)+1$.
\end{enumerate}
\end{enumerate}
\end{lema}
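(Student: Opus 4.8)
The plan is to derive all six items from a small toolkit: the multiplication-by-$u$ maps and the short exact sequences they produce, the Herzog--Vladoiu--Zheng combinatorial model of $\sdepth$, and the classical behaviour of $\depth$ under these operations. It is convenient to split the list into three groups: (1)--(4) concern the colon ideal $(I:u)$, (5) the quotient by a regular element, and (6) the one-variable extension of the base ring.

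For (1)--(3), the starting point is that multiplication by $u$ is an injective $\mathbb Z^n$-graded map $S/(I:u)\hookrightarrow S/I$ with image $(I+uS)/I$, so that
$$0\longrightarrow S/(I:u)(-\deg u)\longrightarrow S/I\longrightarrow S/(I,u)\longrightarrow 0$$
is exact, and likewise $(I:u)(-\deg u)\cong uS\cap I$ is a submodule of $I$. Because the Sdepth Lemma only bounds the middle term of a short exact sequence, for the $\sdepth$ inequalities (1) and (2) I would instead work in the HVZ poset model: translation by the exponent vector of $u$ relates the characteristic poset of $(I:u)$ to that of $I$, and this translation argument is exactly how these inequalities are established in \cite{asia}; alternatively one just cites those statements. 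Item (3) is the classical fact that taking a colon by a monomial does not lower the depth of the quotient; I would cite it, or prove it by induction on $\deg u$, reducing to $u=x_i$ and analysing the displayed sequence together with a primary decomposition of $I$.

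For (4), the hypothesis $I=u(I:u)$ means precisely that $I\subseteq(u)$, i.e.\ every monomial of $I$ is divisible by $u$; hence $uS\cap I=I$ and multiplication by $u$ becomes an isomorphism $(I:u)(-\deg u)\xrightarrow{\ \sim\ }I$, which is (4c). Under the same hypothesis $(I,u)=(u)$, so the displayed sequence becomes $0\to S/(I:u)(-\deg u)\to S/I\to S/(u)\to 0$ with $\sdepth(S/(u))=\depth(S/(u))=n-1$. Plugging this into the Sdepth and Depth Lemmas and using $\sdepth(S/(I:u)),\depth(S/(I:u))\le\dim(S/(I:u))\le n-1$ gives $\sdepth(S/I)\ge\sdepth(S/(I:u))$ and $\depth(S/I)\ge\depth(S/(I:u))$, which combined with (1) and (3) yield (4a) and (4b). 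For (5): when $u$ is regular on $S/I$ the sequence $0\to S/I(-\deg u)\xrightarrow{\ u\ }S/I\to S/(I,u)\to 0$ is exact, and (5b) is the standard statement that a regular element drops depth by exactly one (the inequality $\ge$ is the Depth Lemma, and $\le$ follows by splicing $u$ in front of a maximal $S/(I,u)$-regular sequence); (5a) is its Stanley-depth counterpart, which I would take from the cited literature, and which reduces to (6) read backwards when $u$ is a variable. Finally (6) is immediate from $S'/IS'\cong(S/I)\otimes_K K[x_{n+1}]$ and $IS'\cong I\otimes_K K[x_{n+1}]$ for $\depth$, while the Stanley-depth half is \cite[Lemma 3.6]{hvz}.

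The one genuine obstacle is obtaining the \emph{lower} bounds for $\sdepth$ in (1), (2) and (5a): the Sdepth Lemma controls only the middle module of a short exact sequence, so these must be extracted from the poset model (or imported wholesale from the transfer results cited after the statement), and in the colon case one has to verify that enlarging the bound vector to absorb the shift by $\deg u$ leaves the computed Stanley depth unchanged. Everything else is bookkeeping with the two short exact sequences above.
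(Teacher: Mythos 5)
The paper offers no proof of this lemma at all: it is introduced with ``We also recall the following well known results'' and a list of references (Rauf, Cimpoea\c s, Herzog--Vl\u adoiu--Zheng, etc.), so the ``official'' argument is citation. Your write-up is consistent with that and is correct as far as it goes --- the exact sequences, the observation that $I=u(I:u)$ is equivalent to $I\subseteq(u)$, and the reduction of (4) and (6) are all sound, and you rightly defer to the same literature for the nontrivial Stanley-depth transfer statements (1), (2) and (5a), which is exactly what the authors do.
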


\begin{lema}\label{lem7}
Let $I\subset S$ be a monomial ideal. Then the following assertions are equivalent:
\begin{enumerate}
\item[(1)] $\mathfrak m=(x_1,\ldots,x_n)\in \Ass(S/I)$.
\item[(2)] $\depth(S/I)=0$.
\item[(3)] $\sdepth(S/I)=0$.
\end{enumerate}
\end{lema}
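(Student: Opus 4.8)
The plan is to prove the equivalences via the implications $(2)\Rightarrow(1)$, $(1)\Rightarrow(2)$, $(1)\Rightarrow(3)$ and $(3)\Rightarrow(2)$; the first three are routine and the last carries all the weight. For $(1)\Leftrightarrow(2)$ I would use only classical commutative algebra: $\depth(S/I)=0$ means $\mathfrak m$ contains no $S/I$-regular element, i.e. $\mathfrak m$ lies inside the set of zerodivisors of $S/I$, which is $\bigcup_{P\in\Ass(S/I)}P$; by prime avoidance $\mathfrak m\subseteq P$ for some $P\in\Ass(S/I)$, whence $\mathfrak m=P\in\Ass(S/I)$. Conversely, if $\mathfrak m\in\Ass(S/I)$ then every element of $\mathfrak m$ is a zerodivisor on $S/I$, so $\depth(S/I)=0$.

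For $(1)\Rightarrow(3)$: since $I$ is a monomial ideal and $\mathfrak m\in\Ass(S/I)$, there is a monomial $u\notin I$ with $(I:u)=\mathfrak m$, i.e. $x_ju\in I$ for all $j$. Let $\mathcal D:\,S/I=\bigoplus_{i=1}^r m_iK[Z_i]$ be an arbitrary Stanley decomposition. Since $S/I$ is $\mathbb Z^n$-graded with at most one-dimensional graded components, the nonzero homogeneous element $\bar u$ lies in a single Stanley space, say $\bar u=m_{i_0}f$ with $0\neq f\in K[Z_{i_0}]$. If some $x_j\in Z_{i_0}$, then $x_jf\neq 0$, so $x_j\bar u=m_{i_0}(x_jf)\neq 0$ by freeness of $m_{i_0}K[Z_{i_0}]$, contradicting $x_ju\in I$. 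Hence $Z_{i_0}=\emptyset$ and $\sdepth(\mathcal D)=0$; as $\mathcal D$ was arbitrary, $\sdepth(S/I)=0$.

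For $(3)\Rightarrow(2)$ I would argue the contrapositive: assuming $\mathfrak m\notin\Ass(S/I)$, show $\sdepth(S/I)\geq 1$ using the combinatorial description of Stanley depth from \cite{hvz}. Fix $g$ at least as large as every exponent occurring in a minimal generator of $I$, set $\mathbf g=(g,\ldots,g)$, and let $P^{\mathbf g}=\{u\notin I : u\mid x_1^g\cdots x_n^g\}$, a finite poset under divisibility; then $\sdepth(S/I)$ equals the maximum over partitions of $P^{\mathbf g}$ into Stanley intervals $[a,b]$ of $\min_{[a,b]}|\{j:b_j=g\}|$. Under our hypothesis every maximal element $b$ of $P^{\mathbf g}$ satisfies $b_j=g$ for some $j$: otherwise $x_jb\in I$ for all $j$, forcing $(I:b)=\mathfrak m$ and hence $\mathfrak m\in\Ass(S/I)$ (the choice of $g$ guarantees that any such witness divides $x_1^g\cdots x_n^g$, since if $b_j>g$ the minimal generator dividing $x_jb$ already divides $b$). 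So it suffices to partition $P^{\mathbf g}$ into intervals each of whose top is a maximal element, as such a partition yields $\min_{[a,b]}|\{j:b_j=g\}|\geq 1$.

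The main obstacle is producing that partition: one must cover $P^{\mathbf g}$ by pairwise disjoint Stanley intervals $[a,b]$ with every $b$ maximal in $P^{\mathbf g}$. I would build it greedily, processing monomials in order of decreasing total degree (so the maximal elements first) and, whenever an uncovered monomial $a$ is met, appending it to a chain leading up to an already chosen maximal top lying above it — using that a non-maximal element of $P^{\mathbf g}$ always has an upper cover $ax_j\in P^{\mathbf g}$. Checking that this greedy procedure terminates with a genuine partition into Stanley intervals is the technical heart of the argument. Since the equivalence $\sdepth(S/I)=0\Leftrightarrow\depth(S/I)=0$ is well known, one may alternatively simply invoke \cite{hvz} (together with \cite{asia}, \cite{mir}, \cite{mirci}) for this step.
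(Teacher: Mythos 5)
The paper does not prove this lemma at all: it is listed among the ``well known results'' recalled in Section 2 and is covered only by the citations preceding it (Rauf, Cimpoea\c s, Herzog--Vladoiu--Zheng, etc.), so there is no in-paper argument to compare against. Measured on its own terms, your proposal is correct and complete for three of the four implications: the equivalence $(1)\Leftrightarrow(2)$ via zerodivisors and prime avoidance is standard and right, and your proof of $(1)\Rightarrow(3)$ is clean --- the key points (associated primes of a monomial ideal have the form $(I:u)$ with $u$ a monomial, the multigraded components of $S/I$ are at most one-dimensional so $\bar u$ sits in a single Stanley space, and freeness of $m_{i_0}K[Z_{i_0}]$ forces $Z_{i_0}=\emptyset$) are all correctly deployed.

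The gap is in $(3)\Rightarrow(2)$, exactly where you flag it, and the greedy construction as described would not work as stated. A Stanley interval $[a,b]$ in the poset $P^{\mathbf g}$ is the \emph{full} set $\{c: a\le c\le b\}$, which is a box, not a chain; so ``appending an uncovered element $a$ to a chain leading up to an already chosen maximal top $b$'' does not produce a legal partition, because $[a,b]$ will in general contain many intermediate elements that earlier stages of the greedy procedure have already assigned to other intervals. (Even the weaker goal suffices --- you only need each top to have some coordinate equal to $g$, not to be maximal --- but the disjointness obstruction is the same.) Proving that a partition with $\min|\{j:b_j=g\}|\ge 1$ exists whenever $\mathfrak m\notin\Ass(S/I)$ is genuinely the content of the known theorem $\depth(S/I)=0\Leftrightarrow\sdepth(S/I)=0$, and your sketch does not close it. Your stated fallback --- citing the literature for this equivalence --- is legitimate and is in effect what the authors themselves do, so the proposal is acceptable with that step treated as a citation rather than as a proof; but the self-contained version you outline is not yet a proof.
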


Other useful results are the following:

\begin{teor}(See \cite[Theorem 2.4]{shen})\label{shen}

Let $I\subset S$ be a complete intersection monomial ideal with $|G(I)|=m$. Then 
$$\sdepth(I) = n-\left\lfloor \frac{m}{2} \right\rfloor.$$
\end{teor}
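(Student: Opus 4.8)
The plan is to derive both inequalities in $\sdepth(I)=n-\lfloor m/2\rfloor$ from the known value $\sdepth(\mathfrak m_m)=\lceil m/2\rceil$ of the Stanley depth of the graded maximal ideal $\mathfrak m_m=(x_1,\dots,x_m)$ of $K[x_1,\dots,x_m]$ (Bir\'{o}, Howard, Keller, Shareshian and Trotter). First I would record the structural fact that, $I$ being a complete intersection, its minimal monomial generators $u_1,\dots,u_m$ have pairwise disjoint supports; write $X_i=\supp(u_i)$, let $W$ be the set of variables lying in no $u_i$, and put $n_i=|X_i|$, so that $n=|W|+n_1+\dots+n_m$. Applying Lemma \ref{lem}(6) to the $n-m$ variables outside $(x_1,\dots,x_m)$ gives the benchmark $\sdepth\big((x_1,\dots,x_m)S\big)=\lceil m/2\rceil+(n-m)=n-\lfloor m/2\rfloor$, against which $\sdepth(I)$ is to be sandwiched.

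For the upper bound I would pick a variable $y_i\in X_i$ for each $i$ and colon successively by the monomials $u_i/y_i$. Because the supports are pairwise disjoint, a direct computation of monomial colon ideals gives $(y_1,\dots,y_{i-1},u_i,\dots,u_m):(u_i/y_i)=(y_1,\dots,y_i,u_{i+1},\dots,u_m)$, while $u_i/y_i$ does not lie in the ideal it is applied to; so Lemma \ref{lem}(2) produces the chain $\sdepth(I)\le\sdepth\big((y_1,u_2,\dots,u_m)\big)\le\dots\le\sdepth\big((y_1,\dots,y_m)\big)=n-\lfloor m/2\rfloor$.

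For the lower bound I would use the combinatorial criterion of \cite{hvz}: applied to $\mathfrak m_m$ it says that $\sdepth(\mathfrak m_m)$ is the largest $d$ for which the nonempty subsets of $[m]=\{1,\dots,m\}$ admit a partition into intervals $[A,B]=\{C:A\subseteq C\subseteq B\}$ with $|B|\ge d$ throughout; by the quoted value, fix such a partition $\mathcal Q$ with $|B|\ge\lceil m/2\rceil$ for every $[A,B]\in\mathcal Q$. Sorting each monomial $\mu\in I$ by the nonempty set $\beta(\mu)=\{\,j:u_j\mid\mu\,\}$, we get a partition of $I$ into the graded $K$-subspaces $I_{[A,B]}=\{\mu\in I:A\subseteq\beta(\mu)\subseteq B\}$, $[A,B]\in\mathcal Q$. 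Relative to $S=\bigotimes_jK[X_j]\otimes_K K[W]$, the subspace $I_{[A,B]}$ is graded-isomorphic to the tensor product over $K$ of $u_jK[X_j]$ for $j\in A$, of $K[X_j]$ for $j\in B\setminus A$, of the span of the monomials of $K[X_j]$ not divisible by $u_j$ (graded-isomorphic to $K[X_j]/(u_j)$) for $j\notin B$, and of $K[W]$; tensoring Stanley decompositions of the factors gives one of $I_{[A,B]}$, and assembling these over $\mathcal Q$ gives a Stanley decomposition of $I$. Now $u_jK[X_j]$ and $K[X_j]$ are free on $n_j$ variables, for $X_j=\{x_{i_1},\dots,x_{i_r}\}$ and $u_j=\prod_s x_{i_s}^{c_s}$ one has the explicit decomposition
$$K[X_j]/(u_j)=\bigoplus_{l=1}^{r}\ \bigoplus_{t=0}^{c_l-1}\ \big(x_{i_1}^{c_1}\cdots x_{i_{l-1}}^{c_{l-1}}x_{i_l}^{t}\big)\,K[x_{i_1},\dots,\widehat{x_{i_l}},\dots,x_{i_r}]$$
with every part free on $r-1=n_j-1$ variables, and the Stanley depth of a tensor product (over $K$) of graded subspaces in disjoint sets of variables is at least the sum of their Stanley depths; hence $\sdepth(I_{[A,B]})\ge\sum_{j\in B}n_j+\sum_{j\notin B}(n_j-1)+|W|=n-(m-|B|)\ge n-\lfloor m/2\rfloor$, and therefore $\sdepth(I)\ge n-\lfloor m/2\rfloor$.

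I expect the single genuinely hard ingredient to be the identity $\sdepth(\mathfrak m_m)=\lceil m/2\rceil$ itself. Its lower bound is the delicate interval-partition construction of Bir\'{o}, Howard, Keller, Shareshian and Trotter, which, ``blown up'' along the blocks $X_j$ as above, supplies the lower bound here; its upper bound is the matching nontrivial estimate, which the colon reduction transports to $I$. Everything else --- the monomial colon identities, the explicit decomposition of $K[X_j]/(u_j)$, the behaviour of $\sdepth$ under tensor products over $K$, the translation between Stanley decompositions and interval partitions provided by \cite{hvz}, and checking that the variables in $W$ contribute exactly $|W|$ to each side --- is routine bookkeeping.
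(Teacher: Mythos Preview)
The paper does not supply its own proof of this statement: it is quoted as a known result with the attribution ``(See \cite[Theorem 2.4]{shen})'' and no argument is given. So there is nothing in the paper to compare against beyond the bare citation.

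Your proposal is correct and, in fact, recapitulates Shen's original strategy: both bounds are reduced to the Bir\'o--Howard--Keller--Shareshian--Trotter computation $\sdepth(\mathfrak m_m)=\lceil m/2\rceil$. The upper bound via successive colons by $u_i/y_i$ is sound (your verification that $(y_1,\dots,y_{i-1},u_i,\dots,u_m):(u_i/y_i)=(y_1,\dots,y_i,u_{i+1},\dots,u_m)$ and that $u_i/y_i$ is not in the ideal uses exactly the pairwise disjointness of the supports). The lower bound --- pulling back the interval partition of $2^{[m]}\setminus\{\emptyset\}$ along the map $\mu\mapsto\{j:u_j\mid\mu\}$, factoring each block $I_{[A,B]}$ as a tensor product over the variable blocks $X_j$ and $W$, and assembling Stanley decompositions componentwise --- is also correct; the explicit decomposition you wrote for $K[X_j]/(u_j)$ indeed has every summand free on $n_j-1$ variables, so the count $\sdepth(I_{[A,B]})\geq n-(m-|B|)$ goes through. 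Your assessment of difficulty is accurate: the only deep ingredient is the interval partition of Bir\'o \emph{et al.}, and the rest is bookkeeping.
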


\begin{lema}(See \cite[Lemma 2.5]{okazaki})\label{oka}

Let $I\subset S$ be a monomial ideal with $G(I)=\{v_1,\ldots,v_m\}$. Assume that $x_n$ divides $v_i$ for $1\leq i\leq r$ but not for $r+1\leq i\leq m$,
where $1\leq r\leq m-1$. Let $J=(v_{r+1},\ldots,v_m)$. Then $I/x_n(I:x_n) \cong J\cap S'$, where $S'=K[x_1,\ldots,x_{n-1}]$.
\end{lema}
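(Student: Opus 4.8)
The plan is to show that both modules have a $K$-basis indexed by the same set of monomials, and that the evident bijection between these bases extends to an isomorphism of ($\mathbb Z^n$-)graded modules. First I would pin down $x_n(I:x_n)$ explicitly: since $I$ and $(I:x_n)$ are monomial ideals, so is $x_n(I:x_n)$, and its monomials are precisely the monomials of the form $x_n u$ with $x_n u\in I$; equivalently, $x_n(I:x_n)$ is the $K$-span inside $I$ of all monomials of $I$ that are divisible by $x_n$. In particular $x_n(I:x_n)\subseteq I$, so the quotient is defined, and $I/x_n(I:x_n)$ has as $K$-basis the residue classes of those monomials of $I$ that are \emph{not} divisible by $x_n$. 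Moreover, if $w$ is such a monomial, then $x_nw\in I$ is divisible by $x_n$, hence $x_nw\in x_n(I:x_n)$; thus $x_n$ annihilates $I/x_n(I:x_n)$, so this quotient is naturally a module over $S/(x_n)\cong S'$.

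Next I would describe $J\cap S'$ in parallel terms. By hypothesis $v_{r+1},\dots,v_m\in S'$, so $J\cap S'$ is the monomial ideal of $S'$ generated by $v_{r+1},\dots,v_m$, and its $K$-basis consists of the monomials of $S'$ divisible by some $v_j$ with $r+1\le j\le m$. The combinatorial heart of the argument is the claim: a monomial $w$ lies in $I$ and is not divisible by $x_n$ if and only if $w\in J\cap S'$. For one direction, $w\in J\cap S'$ gives $w\in J\subseteq I$ and $x_n\nmid w$ at once. For the other, a monomial $w\in I$ is divisible by some $v_i\in G(I)$; if $1\le i\le r$ then $x_n\mid v_i\mid w$, so when $x_n\nmid w$ we are forced into $r+1\le i\le m$, whence $w\in J$, and $w\in S'$ by assumption on $w$.

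Finally I would let $\varphi\colon I/x_n(I:x_n)\to J\cap S'$ be the $K$-linear map sending the class of a monomial $w\in I$ with $x_n\nmid w$ to $w$ itself. By the previous paragraph $\varphi$ carries the distinguished $K$-basis of the source bijectively onto that of the target, so it is an isomorphism of $\mathbb Z^n$-graded $K$-vector spaces, and it clearly preserves multidegrees. It then remains to check that $\varphi$ is $S$-linear, where $S$ acts on $J\cap S'$ through the canonical surjection $S\to S/(x_n)\cong S'$: for a monomial $u\in S$ and a basis monomial $w$, if $x_n\nmid u$ then $uw$ is again a monomial of $I$ not divisible by $x_n$ and $\varphi(u\cdot\overline w)=\varphi(\overline{uw})=uw=u\cdot\varphi(\overline w)$, while if $x_n\mid u$ then $u\cdot\overline w=0$ (as $x_n$ kills the quotient) and $u$ also acts as $0$ on the target. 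This gives the asserted isomorphism.

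I do not expect a genuine obstacle here; the only points requiring care are the first step — identifying $x_n(I:x_n)$ with the span of the $x_n$-divisible monomials of $I$ — and keeping the two compatible module structures (over $S$ and over $S'$) straight throughout.
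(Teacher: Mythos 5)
Your proof is correct, and the paper itself offers no argument for this lemma --- it is quoted verbatim from Okazaki's paper \cite[Lemma 2.5]{okazaki}, so there is nothing internal to compare against. Your route (identifying the monomial $K$-basis of $x_n(I:x_n)$ with the $x_n$-divisible monomials of $I$, hence the basis of the quotient with the monomials of $I$ not divisible by $x_n$, and matching these with the monomials of $J\cap S'$, with the $S$-action factoring through $S/(x_n)\cong S'$) is the standard one and coincides in substance with Okazaki's original argument.
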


Lemma \ref{oka} plays a crucial role in the proof of the following result:

\begin{teor}(See \cite[Theorem 2.3]{okazaki})\label{okaz}
Let $I\subset S$ be a monomial ideal with $|G(I)|=m$. Then 
$$\sdepth(I)\geq n-\left\lfloor \frac{m}{2} \right\rfloor.$$
\end{teor}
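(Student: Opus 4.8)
The plan is to induct on $n$, using Lemma \ref{oka} to peel off the variable $x_n$ and pass to a polynomial ring in one fewer variable, combined with the Sdepth Lemma \ref{asia}. First I would dispose of trivial cases: if $m=1$ then $I$ is principal, $I\cong S$ as a graded module (up to shift), so $\sdepth(I)=n=n-\lfloor 1/2\rfloor$, and if every generator of $I$ is divisible by $x_n$ we may write $I=x_n I'$ with $I'\subset S$ and $\sdepth(I)=\sdepth(I')$, reducing to the same statement for $I'$; similarly if some variable does not divide any generator we split off that variable using Lemma \ref{lem}(6)(a), which both decreases $n$ by one and leaves $m$ unchanged while increasing $\sdepth$ by one, so the inequality is preserved. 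Thus I may assume $1\le r\le m-1$ in the notation of Lemma \ref{oka}, i.e. $x_n$ divides some but not all of the generators $v_1,\ldots,v_m$.

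Now consider the short exact sequence of $\mathbb Z^n$-graded $S$-modules
\begin{equation*}
0 \longrightarrow I/x_n(I:x_n) \longrightarrow S/x_n(I:x_n) \longrightarrow S/I \longrightarrow 0,
\end{equation*}
or rather the sequence I actually want, relating $I$, $x_n(I:x_n)$ and their quotient. The key structural input is Lemma \ref{oka}: $I/x_n(I:x_n)\cong J\cap S'$ where $J=(v_{r+1},\ldots,v_m)$ is generated by the $m-r$ generators not divisible by $x_n$, viewed inside $S'=K[x_1,\ldots,x_{n-1}]$. On the other hand $x_n(I:x_n)\cong (I:x_n)$ (up to the obvious isomorphism, multiplication by $x_n$), and $(I:x_n)$ has at most $m$ generators — in fact one can arrange it to be generated by the $v_i/x_n$ for $i\le r$ together with the $v_j$ for $j>r$, and after simplification it has a generating set of size at most $m-1$ once one accounts for divisibility relations; the point is to set up the induction so that the "$x_n(I:x_n)$ part" is handled by Lemma \ref{lem}(4)(c) giving $\sdepth(x_n(I:x_n))=\sdepth(I:x_n)$, and then apply the inductive hypothesis to $(I:x_n)$ with a smaller generator count or invoke Lemma \ref{lem}(2). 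For the quotient piece $J\cap S'$, since $J$ has $m-r\le m-1$ generators in the ring $S'$ with $n-1$ variables, by induction $\sdepth_{S'}(J)\ge (n-1)-\lfloor (m-r)/2\rfloor$; and $J\cap S'$ regarded back in $S$ (adjoining $x_n$ as a free variable) gains one in Stanley depth by Lemma \ref{lem}(6)(a), so $\sdepth(J\cap S')\ge n-\lfloor(m-r)/2\rfloor \ge n-\lfloor m/2\rfloor$.

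Assembling: from the short exact sequence $0\to x_n(I:x_n)\to I\to I/x_n(I:x_n)\to 0$ and Lemma \ref{asia},
\begin{equation*}
\sdepth(I)\ge \min\{\sdepth(x_n(I:x_n)),\ \sdepth(I/x_n(I:x_n))\}.
\end{equation*}
The second term is $\ge n-\lfloor m/2\rfloor$ by the previous paragraph. For the first term, $\sdepth(x_n(I:x_n))=\sdepth(I:x_n)\ge \sdepth(I)$ by Lemma \ref{lem}(4)(c) and (2) — but this is circular as stated, so instead I would bound $(I:x_n)$ directly by the inductive hypothesis after noting it has at most $m-1$ generators (the generator $v_i/x_n$ may coincide with or be divisible by some $v_j$, $j>r$, but in the worst case there are $r$ generators $v_i/x_n$ and $m-r$ generators $v_j$, so one needs a slightly more careful count; the honest approach is to run the induction on the pair $(n,m)$ lexicographically, or to observe $|G(I:x_n)|\le m$ and the case of equality forces $x_n\nmid v_j$ for all $j>r$ to already be redundant). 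The main obstacle is precisely this bookkeeping on the number of generators of $I:x_n$ versus $J$: one must verify that $\lfloor m/2\rfloor$ is large enough to absorb both branches simultaneously, using the elementary inequality $\lfloor r/2\rfloor + \lceil (m-r)/2\rceil$ comparisons. Once that combinatorial lemma on floors is in hand, the induction closes cleanly.
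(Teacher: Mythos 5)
Your overall strategy --- the exact sequence $0\to x_n(I:x_n)\to I\to I/x_n(I:x_n)\to 0$ combined with Lemma \ref{oka} and Lemma \ref{asia} --- is the right starting point (note that the paper itself gives no proof, only the citation to Okazaki), but your argument has two genuine gaps, one of them fatal as written. The step ``$J\cap S'$ regarded back in $S$ gains one in Stanley depth by Lemma \ref{lem}(6)(a)'' is false: the $S$-module $I/x_n(I:x_n)$ is annihilated by $x_n$ (since $x_nI\subseteq x_n(I:x_n)$), so no Stanley space in any decomposition of it can contain $x_n$, and its Stanley depth over $S$ equals its Stanley depth over $S'$; Lemma \ref{lem}(6)(a) applies to the extended ideal $JS$, which is a different module. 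Hence the quotient branch yields only $(n-1)-\left\lfloor \frac{m-r}{2}\right\rfloor$, and the inequality $(n-1)-\left\lfloor \frac{m-r}{2}\right\rfloor\geq n-\left\lfloor \frac{m}{2}\right\rfloor$ fails exactly when $r=1$ and $m$ is odd. This case cannot be avoided by choosing the variable more cleverly: for $I=(x_1x_2,x_3x_4,x_5x_6)\subset K[x_1,\ldots,x_6]$ every variable divides exactly one generator, and the quotient branch is capped by $\sdepth_{K[x_1,\ldots,x_5]}((x_1x_2,x_3x_4))=4$, while the theorem asserts $\sdepth(I)\geq 5$. So the ``combinatorial lemma on floors'' you defer to at the end is simply false, and the induction as you set it up does not close.

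The missing ideas are the following. (i) After dividing out $\gcd(v_1,\ldots,v_m)$ (harmless by Lemma \ref{lem}(4)(c)) no variable divides all generators; then either some variable divides at least two generators, so one can take $2\leq r\leq m-1$, for which $\left\lfloor \frac{m-r}{2}\right\rfloor\leq\left\lfloor \frac{m}{2}\right\rfloor-1$ and the quotient branch does give $n-\left\lfloor\frac{m}{2}\right\rfloor$, or else the generators have pairwise disjoint supports, i.e.\ $I$ is a complete intersection, and this case must be handled separately via Theorem \ref{shen} --- this is precisely why the complete intersection case is the extremal one. (ii) For the colon branch, neither ``$|G(I:x_n)|\leq m-1$'' nor lexicographic induction on the pair $(n,m)$ works: $(I:x_n)$ can have exactly $m$ minimal generators in the same $n$ variables, e.g.\ $(x_1x_2,x_3x_4):x_4=(x_1x_2,x_3)$. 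An induction that does close is on the total degree $\sum_{i}\deg v_i$ of the minimal generating set, which strictly decreases when passing to $(I:x_n)$ because $x_n$ divides at least one minimal generator, while $|G(I:x_n)|\leq m$ keeps the target $n-\left\lfloor \frac{m}{2}\right\rfloor$ intact. Without (i) and (ii) the proposal does not constitute a proof.
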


Note that, according to Theorem \ref{shen} and Theorem \ref{okaz}, the complete intersection monomial case gives the minimal value for $\sdepth(I)$ in terms
of the number of minimal monomial generators of $I$.

In the following, we fix some notations and we recall the main result of \cite{lucrare2}. 
Let $0 \neq I\subset J\subset S$ be two square free monomial ideals. We consider the nonnegative integers
$$\alpha_k(J/I):=\# \{u\in S\;:\;u\text{ squarefree, with }u\in J\setminus I\text{ and }\deg(u)=k\},\;0\leq k\leq n.$$
For all $0\leq d\leq n$ and $0\leq k\leq d$, we consider the integers
\begin{equation}\label{betak}
  \beta_k^d(J/I):=\sum_{j=0}^k (-1)^{k-j} \binom{d-j}{k-j} \alpha_j(J/I).
\end{equation}
Note that, using an inverse formula, from \eqref{betak} we deduce that
\begin{equation}\label{alfak}
  \alpha_k(J/I):=\sum_{j=0}^k \binom{d-j}{k-j} \beta^d_j(J/I).
\end{equation}
With the above we have:

\begin{teor}(See \cite[Theorem 2.4]{lucrare2})\label{d1}

The Hilbert depth of $J/I$ is:
$$\hdepth(J/I):=\max\{d\;:\;\beta_k^d(J/I) \geq 0\text{ for all }0\leq k\leq d\}.$$
\end{teor}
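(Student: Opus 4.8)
The plan is to determine the Hilbert series $H_{J/I}(t)$ explicitly in terms of the numbers $\alpha_k(J/I)$ and then feed it into Uliczka's characterization $\hdepth(M)=\max\{r\;:\;(1-t)^rH_M(t)\text{ is positive}\}$. The starting point is the elementary but crucial observation that, since $I$ and $J$ are squarefree, membership of a monomial $w\in S$ in $J$ (or in $I$) depends only on $\supp(w)$: a squarefree generator divides $w$ exactly when its support is contained in $\supp(w)$. Hence $w\in J\setminus I$ if and only if the radical monomial $u=\prod_{x_i\mid w}x_i$ lies in $J\setminus I$. Grouping the monomials of $J\setminus I$ by their radical $u$, and noting that the monomials with radical $u$ are precisely $u\cdot K[\supp(u)]$ with degrees shifted by $\deg(u)=|\supp(u)|$, one gets an isomorphism of $\mathbb Z$-graded $K$-vector spaces
$$J/I\;\cong\;\bigoplus_{\substack{u\text{ squarefree}\\ u\in J\setminus I}} u\,K[\supp(u)],$$
and therefore
$$H_{J/I}(t)=\sum_{k=0}^{n}\alpha_k(J/I)\,\frac{t^k}{(1-t)^k}.$$

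Next I would multiply by $(1-t)^d$ and expand, writing $(1-t)^dH_{J/I}(t)=\sum_{k=0}^n\alpha_k(J/I)\,t^k(1-t)^{d-k}$. I would separate the terms with $k\le d$ (which are honest polynomials, $t^k(1-t)^{d-k}=\sum_{l\ge k}(-1)^{l-k}\binom{d-k}{l-k}t^l$) from those with $k>d$ (which are power series $t^k/(1-t)^{k-d}=\sum_{l\ge k}\binom{l-d-1}{l-k}t^l$ with nonnegative coefficients). Collecting the coefficient of $t^l$: for $0\le l\le d$ only the $k\le l$ terms contribute and one recognizes exactly $\beta_l^d(J/I)$ from \eqref{betak}; for $l>d$ only the terms with $d<k\le l$ contribute, giving $\sum_{d<k\le l}\alpha_k(J/I)\binom{l-d-1}{l-k}\ge 0$ automatically (this sum is empty, hence $0$, when $d\ge n$). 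Consequently $(1-t)^dH_{J/I}(t)$ is positive if and only if $\beta_l^d(J/I)\ge 0$ for all $0\le l\le d$.

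To finish, note that $(1-t)^{d}H_{J/I}(t)=(1-t)^{d+1}H_{J/I}(t)\cdot\sum_{i\ge0}t^i$, so positivity is preserved when $d$ is decreased; also $\hdepth(J/I)<\infty$ (indeed $\le n$, since $J/I\ne 0$ is a finitely generated module over the $n$-variable ring $S$). Hence the set $\{d\;:\;\beta_l^d(J/I)\ge 0\text{ for all }0\le l\le d\}$ is precisely $\{0,1,\ldots,\hdepth(J/I)\}$, and taking its maximum yields the stated formula. The inversion relation \eqref{alfak} is the bookkeeping tool that guarantees consistency between the $\alpha$'s and the $\beta$'s throughout.

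The step I expect to be the main obstacle is the coefficient extraction in the middle paragraph: one has to keep the two regimes $k\le d$ versus $k>d$, and $l\le d$ versus $l>d$, carefully apart, check that the polynomial part reproduces $\beta_l^d(J/I)$ exactly (a short binomial computation), and verify that the degrees $l>d$ can never produce a negative coefficient. Everything else — the support decomposition, the translation to Hilbert series, and the reduction to an interval of admissible $d$'s — is formal.
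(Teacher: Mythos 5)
Your argument is correct and complete, and it follows exactly the route the paper indicates for this result: the paper does not reprove Theorem \ref{d1} but cites it from \cite[Theorem 2.4]{lucrare2}, which is derived precisely by decomposing $J/I$ by supports to get $H_{J/I}(t)=\sum_{k}\alpha_k(J/I)\,t^k/(1-t)^k$ and feeding this into Uliczka's positivity characterization. The coefficient extraction you single out as the delicate step checks out: for $l\leq d$ the coefficient of $t^l$ in $(1-t)^dH_{J/I}(t)$ is exactly $\beta_l^d(J/I)$ as in \eqref{betak}, while for $l>d$ the binomials $\binom{d-k}{l-k}$ with $k\leq d$ vanish and the remaining contribution $\sum_{d<k\leq l}\alpha_k(J/I)\binom{l-d-1}{l-k}$ is nonnegative, so together with the down-set property and the bound $\hdepth(J/I)\leq n$ the stated maximum is attained.
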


We also recall the following basic results, regarding the Hilbert depth invariant:

\begin{prop}(See \cite[Proposition 2.8]{lucrare2})\label{p1}

We have that $$\hdepth(J/I)\geq \sdepth(J/I).$$
\end{prop}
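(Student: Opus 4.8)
The plan is to prove that $\hdepth(J/I)\geq\sdepth(J/I)$ by taking a Stanley decomposition of $J/I$ that achieves the Stanley depth and extracting from it a finitely generated graded $S$-module with the same Hilbert series but depth at least $\sdepth(J/I)$; the characterization $\hdepth(M)=\max\{r:(1-t)^rH_M(t)\text{ is positive}\}$ recalled in the introduction then gives the conclusion. Concretely, suppose $\mathcal D: J/I=\bigoplus_{i=1}^r m_iK[Z_i]$ is a Stanley decomposition with $\sdepth(\mathcal D)=\sdepth(J/I)=:s$, so $|Z_i|\geq s$ for every $i$. For each summand I would pick a subset $Z_i'\subseteq Z_i$ with $|Z_i'|=s$; then $\bigoplus_{i=1}^r m_iK[Z_i']$ is a direct sum of free modules each of Krull dimension $s$, hence a module of depth exactly $s$ (a direct sum of Cohen--Macaulay modules of the same dimension).

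The key point to check is that the Hilbert series is preserved, which is not literally true if one just truncates each $K[Z_i]$ to $K[Z_i']$, since $H_{m_iK[Z_i]}(t)=t^{\deg m_i}/(1-t)^{|Z_i|}$ changes. The standard fix is to argue at the level of Hilbert series rather than modules: one shows that $(1-t)^s H_{J/I}(t)$ has nonnegative coefficients. From the decomposition, $H_{J/I}(t)=\sum_{i=1}^r t^{a_i}/(1-t)^{|Z_i|}$ with $a_i=\deg m_i$, so $(1-t)^s H_{J/I}(t)=\sum_{i=1}^r t^{a_i}(1-t)^{s-|Z_i|}$, and since $s-|Z_i|\leq 0$ each term $t^{a_i}/(1-t)^{|Z_i|-s}$ expands with nonnegative coefficients; summing preserves nonnegativity. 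Hence $(1-t)^s H_{J/I}(t)$ is positive, and by Uliczka's characterization $\hdepth(J/I)\geq s=\sdepth(J/I)$.

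Alternatively, and perhaps cleaner given the tools already in the excerpt, I would invoke Theorem \ref{d1}: one checks directly that $\beta_k^s(J/I)\geq 0$ for all $0\leq k\leq s$ where $s=\sdepth(J/I)$. Using the Stanley decomposition, the squarefree-degree count $\alpha_j(J/I)$ can be written as a sum of contributions $\binom{|Z_i|}{j-b_i}$ over the summands (with $b_i=\deg m_i$, noting $J/I$ squarefree forces the relevant summands to have squarefree structure), and substituting into the definition $\beta_k^s(J/I)=\sum_{j=0}^k(-1)^{k-j}\binom{s-j}{k-j}\alpha_j(J/I)$ and using a Vandermonde-type identity collapses each summand's contribution to $\binom{|Z_i|-s}{k-b_i}\geq 0$, since $|Z_i|\geq s$. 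Summation over $i$ keeps nonnegativity, so $\hdepth(J/I)\geq s$.

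The main obstacle is the bookkeeping in the second approach: making precise how a Stanley decomposition of a squarefree module restricts so that each $m_iK[Z_i]$ contributes $\binom{|Z_i|}{j-\deg m_i}$ to $\alpha_j$, and then carrying out the binomial manipulation cleanly. In the first (Hilbert-series) approach the only subtlety is the elementary but essential observation that $s-|Z_i|\leq 0$ is exactly what makes each term of $(1-t)^sH_{J/I}(t)$ have nonnegative power-series coefficients; everything else is formal. I would write up the Hilbert-series argument as the primary proof since it is shortest and uses only the characterization already quoted in the introduction, and remark that it is in fact the argument behind the general inequality $\hdepth(M)\geq\sdepth(M)$ for any finitely generated graded module.
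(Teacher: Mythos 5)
Your Hilbert-series argument is correct and is essentially the standard proof: the paper itself gives no proof of this proposition, merely citing \cite[Proposition 2.8]{lucrare2}, and the argument behind that citation is exactly the computation $(1-t)^s H_{J/I}(t)=\sum_i t^{a_i}(1-t)^{s-|Z_i|}$ with $|Z_i|\geq s$ forcing nonnegative coefficients. You rightly discard the initial idea of truncating each $K[Z_i]$ to $K[Z_i']$ (which destroys the Hilbert series), and the sketched alternative via $\beta_k^d$ is dispensable; note only that in that sketch the contribution of a summand to $\alpha_j$ should be $\binom{|Z_i|-\deg m_i}{j-\deg m_i}$ rather than $\binom{|Z_i|}{j-\deg m_i}$, after refining to a decomposition with $m_i$ squarefree and $\supp(m_i)\subseteq Z_i$.
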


\begin{prop}(See for instance \cite[Theorem 3.2]{lucrare3} and \cite[Theorem 3.4]{lucrare3})\label{p2}

We have that $$\dim(J/I)\geq \hdepth(J/I)\geq \depth(J/I).$$
\end{prop}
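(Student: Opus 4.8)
The plan is to establish the two inequalities separately; both should follow quickly from the definitions recalled in the Introduction together with Theorem \ref{d1}, so the write-up stays short.

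For the lower bound $\hdepth(J/I)\ge\depth(J/I)$ I would simply invoke the description of $\hdepth(M)$ as the maximal depth of a finitely generated graded $S$-module having the same Hilbert series as $M$, taking that module to be $J/I$ itself. If one prefers an argument through the characterization $\hdepth(M)=\max\{r:(1-t)^rH_M(t)\text{ is positive}\}$, I would set $d=\depth(J/I)$, pass harmlessly to an infinite coefficient field (this changes neither $\depth$, nor $\dim$, nor the Hilbert series), pick a $(J/I)$-regular sequence of linear forms $\ell_1,\dots,\ell_d$, and note that killing them multiplies the Hilbert series by $(1-t)$ each time, so $(1-t)^dH_{J/I}(t)=H_{(J/I)/(\ell_1,\dots,\ell_d)(J/I)}(t)$ has nonnegative coefficients.

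For the upper bound $\dim(J/I)\ge\hdepth(J/I)$ I would work through the combinatorial characterization of Theorem \ref{d1}, since that is the engine of the paper. The crucial preliminary step is to prove that $\alpha_j(J/I)=0$ for every $j>\dim(J/I)$. Given a squarefree monomial $u\in J\setminus I$ of degree $j$, I would observe that the cyclic submodule $S\overline u\subseteq J/I$ is isomorphic, up to a degree shift, to $S/(I:u)$, so that $\dim(J/I)\ge\dim S/(I:u)$; writing the squarefree ideal $I=\bigcap_k P_k$ as an intersection of monomial primes $P_k=(x_i:i\in F_k)$, the condition $u\notin I$ forces $\supp u\cap F_{k_0}=\emptyset$ for some $k_0$, hence $I:u\subseteq P_{k_0}$ and $\dim S/(I:u)\ge n-|F_{k_0}|\ge|\supp u|=j$. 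With this in hand, suppose $d:=\hdepth(J/I)>\dim(J/I)$: then $\beta_k^d(J/I)\ge 0$ for all $k$ by Theorem \ref{d1}, while $\alpha_d(J/I)=0$, so the inversion formula \eqref{alfak} gives $\sum_{j=0}^d\beta_j^d(J/I)=\alpha_d(J/I)=0$, forcing every $\beta_j^d(J/I)=0$ and hence, again by \eqref{alfak}, every $\alpha_k(J/I)=0$ — impossible, since a minimal monomial generator of $J$ not lying in $I$ is a squarefree monomial of $J\setminus I$. Alternatively, and without Theorem \ref{d1}: write $H_{J/I}(t)=h(t)/(1-t)^{\dim(J/I)}$ with $h\in\mathbb Z[t,t^{-1}]$ and $h(1)\neq 0$; then for any $r>\dim(J/I)$ the Laurent polynomial $(1-t)^rH_{J/I}(t)=h(t)(1-t)^{r-\dim(J/I)}$ is nonzero but vanishes at $t=1$, so its coefficients sum to $0$ and it must have a strictly negative one, hence is not positive.

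The only step that is not completely routine is the vanishing $\alpha_j(J/I)=0$ for $j>\dim(J/I)$ — equivalently, the fact that the order of the pole of $H_{J/I}(t)$ at $t=1$ equals $\dim(J/I)$. I expect that is where the actual content lies; once it is in place, both inequalities are bookkeeping.
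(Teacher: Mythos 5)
Your argument is correct, but note that the paper does not actually prove this proposition: it is recalled with a citation to \cite[Theorems 3.2 and 3.4]{lucrare3}, so there is no internal proof to compare against. Your write-up is a legitimate self-contained substitute. The inequality $\hdepth(J/I)\geq\depth(J/I)$ is indeed immediate from the definition of Hilbert depth as the maximal depth over modules with the same Hilbert series (take $N=J/I$), and your regular-sequence variant is also sound. For $\dim(J/I)\geq\hdepth(J/I)$, your ``alternative'' argument via the Hilbert--Serre pole order at $t=1$ is the cleanest route and is essentially the standard proof; the combinatorial route through Theorem \ref{d1} also checks out, and its one nontrivial ingredient --- that $\alpha_j(J/I)=0$ for $j>\dim(J/I)$ --- is correctly established by passing to $S\overline{u}\cong S/(I:u)$ (up to shift) and trapping $(I:u)$ inside a minimal prime $P_{k_0}$ of $I$ avoiding $\supp u$, which yields $j=|\supp u|\leq n-|F_{k_0}|\leq\dim S/(I:u)\leq\dim(J/I)$. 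Two small points worth making explicit if you keep the combinatorial version: the inversion formula \eqref{alfak} is only stated for $0\leq k\leq d$, so when you conclude that all $\alpha_k$ vanish you should say that the range $k\leq d$ comes from the $\beta$-argument and the range $k>d>\dim(J/I)$ comes from the preliminary vanishing step; and the degenerate case $I=0$ (where $I$ has no prime decomposition in your sense) is harmless since then $\dim(J/I)=n$.
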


Note that, in the above proposition, we have equalities when $J/I$ is Cohen-Macaulay.

\newpage
\section{Main results}

\begin{dfn}
Suppose $n \geq 2$ and let $S=K[x_1,\ldots,x_n]$. The path graph $P_n$, of length $n-1$, is the graph on the
vertex set $V(P_n) = \{x_1,\ldots, x_n \}$ and the edge set $$E(P_n) = \{e_i=\{x_i, x_{i+1}\} :\;\text{ for }1 \leq i \leq  n-1\}.$$ 
The edge ideal of $P_n$ is $$I(P_n) = (x_1x_2, x_2x_3, \ldots , x_{n-1}x_n) \subset S.$$
\end{dfn}

For convenience, we denote $I_n=I(P_n)$. We recall the following results:

\begin{prop}\label{pro}
For any $n\geq 2$, we have that:
\begin{enumerate}
\item[(1)] $\depth(S/I_n)=\left\lceil \frac{n}{3} \right\rceil$. (\cite[Lemma 2.8]{mor})\vspace{5pt}
\item[(2)] $\sdepth(S/I_n)=\left\lceil \frac{n}{3} \right\rceil$. (\cite[Lemma 2.3]{stef})
\end{enumerate}
\end{prop}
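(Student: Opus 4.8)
The plan is to prove both equalities at once, since the upper and the lower bounds admit a uniform treatment yielding $\depth$ and $\sdepth$ simultaneously; the tools are the Depth Lemma (Lemma~\ref{lem1}), the Sdepth Lemma (Lemma~\ref{asia}) and the colon‑ideal inequalities of Lemma~\ref{lem}.

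\emph{Upper bound.} Set $u=\prod x_j$, the product running over all $j$ with $1\le j\le n$ and $j\equiv 2\pmod 3$; since no edge $x_ix_{i+1}$ divides $u$, we have $u\notin I_n$. Computing $(I_n:u)$ generator by generator, $x_ix_{i+1}:u$ equals $x_{i+1}$ when $i\equiv 2\pmod 3$, equals $x_i$ when $i\equiv 1\pmod 3$, and equals $x_ix_{i+1}$ when $i\equiv 0\pmod 3$; in the last case $x_i$ (with $i\ge 3$, since $i\equiv 0$) already lies in the ideal, so that generator is redundant. Hence $(I_n:u)$ is generated by the variables $x_i$ with $i\not\equiv 2\pmod 3$, except that $x_n$ must be removed from this list precisely when $n\equiv 1\pmod 3$ (then $x_n$ occurs only in the edge $x_{n-1}x_n$, which contributes the redundant generator $x_{n-1}x_n$). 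Consequently $S/(I_n:u)$ is a polynomial ring over $K$ in exactly $\lceil n/3\rceil$ variables, so $\depth S/(I_n:u)=\sdepth S/(I_n:u)=\lceil n/3\rceil$, and Lemma~\ref{lem}(1),(3) give $\sdepth(S/I_n)\le\lceil n/3\rceil$ and $\depth(S/I_n)\le\lceil n/3\rceil$.

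\emph{Lower bound, by induction on $n$.} The cases $n\le 4$ are checked directly (e.g.\ $S/I_2\cong K[x_1,x_2]/(x_1x_2)$ has $\depth=\sdepth=1$). For $n\ge 5$ I would use the short exact sequence
$$0\longrightarrow S/(I_n:x_{n-1})\xrightarrow{\ x_{n-1}\ }S/I_n\longrightarrow S/(I_n,x_{n-1})\longrightarrow 0,$$
together with the identities $(I_n:x_{n-1})=(x_{n-2},x_n)+I_{n-3}$ and $(I_n,x_{n-1})=(x_{n-1})+I_{n-2}$, where $I_{n-3}\subset K[x_1,\dots,x_{n-3}]$ and $I_{n-2}\subset K[x_1,\dots,x_{n-2}]$ are the corresponding path edge ideals. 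These identify $S/(I_n:x_{n-1})\cong (S_{n-3}/I_{n-3})[x_{n-1}]$ and $S/(I_n,x_{n-1})\cong (S_{n-2}/I_{n-2})[x_n]$ with $S_m=K[x_1,\dots,x_m]$, so by Lemma~\ref{lem}(5),(6) and the inductive hypothesis $\depth S/(I_n:x_{n-1})=\lceil(n-3)/3\rceil+1=\lceil n/3\rceil$ and $\depth S/(I_n,x_{n-1})=\lceil(n-2)/3\rceil+1\ge\lceil n/3\rceil$. The Depth Lemma then yields $\depth(S/I_n)\ge\lceil n/3\rceil$, and the identical argument with Lemma~\ref{asia} in place of the Depth Lemma gives $\sdepth(S/I_n)\ge\lceil n/3\rceil$. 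Combining with the upper bound proves $\depth(S/I_n)=\sdepth(S/I_n)=\lceil n/3\rceil$.

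\emph{Main obstacle.} The delicate point is the choice of the vertex along which to split. The naive sequence built from the leaf $x_n$ only yields $\depth(S/I_n)\ge\min\{\lceil(n-2)/3\rceil+1,\lceil(n-1)/3\rceil\}$, which is one short of $\lceil n/3\rceil$ when $n\equiv 1\pmod 3$. Splitting instead at the penultimate vertex $x_{n-1}$ — which drops $n$ by $3$ in the colon term and by $2$ in the quotient term — is exactly what forces both contributions to be at least $\lceil n/3\rceil$ for every residue of $n$ modulo $3$. Once the colon/quotient identities and the elementary identities $\lceil(n-3)/3\rceil+1=\lceil n/3\rceil$ and $\lceil(n-2)/3\rceil+1\ge\lceil n/3\rceil$ are verified, the remaining bookkeeping is routine.
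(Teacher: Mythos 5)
Your proof is correct. Note, however, that the paper does not prove this proposition at all: it simply cites \cite[Lemma 2.8]{mor} for the depth formula and \cite[Lemma 2.3]{stef} for the Stanley depth formula, so any self-contained argument is ``different'' from the paper by default. What you supply is essentially the standard argument from that literature: the upper bound via the colon by $u=x_2x_5\cdots$ (your computation of $(I_n:u)$ checks out in all three residue classes, including the removal of $x_n$ when $n\equiv 1\pmod 3$, and Lemma \ref{lem}(1),(3) then applies), and the lower bound via the short exact sequence $0\to S/(I_n:x_{n-1})\to S/I_n\to S/(I_n,x_{n-1})\to 0$ combined with the Depth and Sdepth Lemmas. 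Your identifications $S/(I_n:x_{n-1})\cong (S_{n-3}/I_{n-3})[x_{n-1}]$ and $S/(I_n,x_{n-1})\cong (S_{n-2}/I_{n-2})[x_n]$ are right, the inequalities $\left\lceil \frac{n-3}{3}\right\rceil+1=\left\lceil \frac{n}{3}\right\rceil$ and $\left\lceil \frac{n-2}{3}\right\rceil+1\geq\left\lceil \frac{n}{3}\right\rceil$ hold for every residue, and your remark that splitting at the leaf $x_n$ loses one in the case $n\equiv 1\pmod 3$ is exactly the reason the penultimate vertex is the right choice. The only cosmetic quibble is that you invoke Lemma \ref{lem}(5),(6) where only (6) (adjoining a free variable) is actually needed once the ring isomorphisms are in hand; this does not affect correctness.
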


\begin{cor}
For any $n\geq 2$ we have that $\hdepth(S/I_n)\geq \left\lceil \frac{n}{3} \right\rceil$.
\end{cor}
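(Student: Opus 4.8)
The plan is to obtain this as an immediate consequence of the comparison results collected in the Preliminaries, with no new computation. Apply Proposition \ref{p1} with $J=S$ and $I=I_n$: these are squarefree monomial ideals with $0\neq I_n\subsetneq S$, so
$$\hdepth(S/I_n)\geq \sdepth(S/I_n).$$
Now Proposition \ref{pro}(2) evaluates the right-hand side as $\left\lceil \frac{n}{3} \right\rceil$, which is exactly the asserted bound.

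Alternatively, one can route through the depth: Proposition \ref{p2} gives $\hdepth(S/I_n)\geq \depth(S/I_n)$, and Proposition \ref{pro}(1) gives $\depth(S/I_n)=\left\lceil \frac{n}{3} \right\rceil$. Either way the corollary is a one-line deduction from facts already stated, and both routes use only the general chain $\dim\geq\hdepth\geq\sdepth$ (resp.\ $\geq\depth$).

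Accordingly, there is no real obstacle in this statement: all the content sits in the previously cited evaluations of $\depth(S/I_n)$ and $\sdepth(S/I_n)$. What deserves emphasis is that this lower bound is generally \emph{not} sharp; the exact value of $\hdepth(S/I_n)$ is strictly larger for many $n$, and determining it is the purpose of the combinatorial formula in Theorem \ref{cory}. Going beyond the present corollary requires computing the counts $\alpha_k(S/I_n)$ of squarefree monomials of degree $k$ lying outside $I_n$, forming the alternating sums $\beta_k^d(S/I_n)$ from \eqref{betak}, and finding the largest $d$ for which all of them are nonnegative, as prescribed by Theorem \ref{d1} — that is the step demanding care, not this one.
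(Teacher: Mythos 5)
Your proof is correct and coincides with the paper's own one-line argument: the paper likewise derives the corollary from Proposition \ref{p1} together with Proposition \ref{pro}(2), or alternatively from Proposition \ref{p2} together with Proposition \ref{pro}(1). Nothing further is needed.
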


\begin{proof}
It follows from Proposition \ref{p1} and Proposition \ref{pro}(2)
    or Proposition \ref{p2} and Proposition \ref{pro}(1).		
\end{proof}

\begin{prop}\label{pin}
For any $n\geq 2$ we have that 
$$\left\lceil \frac{n+1}{2} \right\rceil \leq \sdepth(I_n) \leq \left\lceil \frac{2n+2}{3} \right\rceil.$$
In particular, $\hdepth(I_n)\geq \left\lceil \frac{n+1}{2} \right\rceil$.
\end{prop}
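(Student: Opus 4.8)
For the lower bound I would simply invoke Theorem~\ref{okaz}. The edge ideal $I_n$ is minimally generated by the $n-1$ monomials $x_1x_2,x_2x_3,\ldots,x_{n-1}x_n$, so
$$\sdepth(I_n)\geq n-\left\lfloor\frac{n-1}{2}\right\rfloor=\left\lceil\frac{n+1}{2}\right\rceil,$$
and the ``in particular'' clause is then immediate from Proposition~\ref{p1}, since $\hdepth(I_n)\geq\sdepth(I_n)$.

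For the upper bound the plan is to pass repeatedly to colon ideals, exploiting Lemma~\ref{lem}(2), which says $\sdepth(I_n)\leq\sdepth(I_n:u)$ for every monomial $u\notin I_n$. The basic step is the easy identity $I_n:x_2=(x_1,x_3)+I(P')$, where $P'$ is the path on $x_4,\ldots,x_n$; informally, colonizing by $x_2$ strips the three consecutive vertices $x_1,x_2,x_3$ from the path and deposits the two ``variable'' generators $x_1$ and $x_3$. I would then iterate: colon successively by $x_2$, then by $x_5$, then by $x_8$, and so on, each time by the second vertex of the path that currently remains. After $q:=\lfloor n/3\rfloor$ such steps one is left with the ideal $(x_1,x_3,x_4,x_6,x_7,x_9,\ldots,x_{3q-2},x_{3q})$, generated by $2q$ of the variables, together with a short tail coming from the remaining $n-3q$ vertices: this tail is empty when $n\not\equiv 2\pmod 3$, and it is the single edge $x_{n-1}x_n$ when $n\equiv 2\pmod 3$, in which case one more colon, by $x_{n-1}$, turns it into an extra variable generator.

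In this way $\sdepth(I_n)\leq\sdepth(L)$, where $L$ is generated by $t$ of the $n$ variables, with $t=2q$ when $n\not\equiv 2$ and $t=2q+1$ when $n\equiv 2\pmod 3$. Since such an $L$ is a complete intersection monomial ideal with $|G(L)|=t$, Theorem~\ref{shen} gives $\sdepth(L)=n-\lfloor t/2\rfloor$, and a short case check by residue of $n$ modulo $3$ finishes things: $\sdepth(I_n)\leq 2q\leq 2q+1=\lceil(2n+2)/3\rceil$ if $n=3q$; $\sdepth(I_n)\leq 2q+1\leq 2q+2=\lceil(2n+2)/3\rceil$ if $n=3q+1$; and $\sdepth(I_n)\leq 2q+2=\lceil(2n+2)/3\rceil$ if $n=3q+2$ (the only case in which this argument meets the claimed bound).

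The hard part is not conceptual but organizational: I would need to verify carefully that at each step the variable chosen for the colon really does lie outside the current ideal (so Lemma~\ref{lem}(2) applies), that each colon behaves precisely as described --- it removes exactly three consecutive path vertices and contributes exactly two variable generators, with no unexpected interaction between the accumulating variable generators and the shrinking path ideal --- and that the three residue classes modulo $3$, the small values $n=2,3$, and the degenerate tails are all accounted for. Beyond this bookkeeping the argument is routine.
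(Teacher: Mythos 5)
Your argument is correct and is essentially the paper's own proof: the paper also obtains the upper bound by passing to the colon ideal $(I_n:u)$ with $u=x_2x_5\cdots x_{3k-1}$ (a single colon by the product rather than your iterated single-variable colons, which is the same computation), identifying it as a complete intersection of $2k$ or $2k+1$ generators, and applying Theorem~\ref{shen} together with Lemma~\ref{lem}. The only cosmetic difference is in the lower bound, where the paper cites \cite[Corollary 1.5]{mir2} while you derive $\left\lceil\frac{n+1}{2}\right\rceil$ directly from Theorem~\ref{okaz} with $|G(I_n)|=n-1$; both routes are valid and your citation of Lemma~\ref{lem}(2) (rather than the paper's (4)) is actually the more accurate one.
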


\begin{proof}
The first inequality follows from \cite[Corollary 1.5]{mir2}. In order to prove the second inequality,
we consider three cases: (i) $n=3k$, (ii) $n=3k+1$ and (iii) $n=3k+2$.
\begin{enumerate}
\item[(i)] $n=3k$. Let $u=x_{2}x_{5}\cdots x_{3k-1}$. Then $(I_n:u)=(x_1,x_3,x_4,x_6,\ldots,x_{3k-2},x_{3k})$ is a monomial prime ideal with $2k$ generators.
    From Lemma \ref{lem}(4) it follows that $$\sdepth(I_n:u)=n-k=2k= \left\lceil \frac{2n+2}{3} \right\rceil \geq \sdepth(I_n).$$
		
\item[(ii)] $n=3k+1$. Let $u=x_{2}x_{5}\cdots x_{3k-1}$. Then $(I_n:u)=(x_1,x_3,x_4,x_6,\ldots,x_{3k-2},x_{3k})$ is a monomial prime ideal with $2k$ generators.
     From Lemma \ref{lem}(4) it follows that $$\sdepth(I_n:u)=n-k=2k+1 = \left\lceil \frac{2n+2}{3} \right\rceil \geq \sdepth(I_n).$$
		
\item[(iii)] $n=3k+2$.	Let $u=x_{2}x_{5}\cdots x_{3k-1}$. $(I_n:u)=(x_1,x_3,x_4,x_6,\ldots,x_{3k-2},x_{3k}, x_{3k+1}x_{3k+2})$ 
      is a monomial prime ideal with $2k+1$ generators.
			From Lemma \ref{lem}(4) it follows that 
			$$\sdepth(I_n:u)=n-\left\lfloor \frac{2k+1}{2} \right\rfloor = n-k =2k+2= \left\lceil \frac{2n+2}{3} \right\rceil \geq \sdepth(I_n).$$
\end{enumerate}
Hence, the proof is complete.
\end{proof}

\pagebreak

\begin{prop}\label{pp}
For all $0\leq k\leq d$, we have that:
\begin{enumerate}
\item[(1)] $\beta_k^d(S/I_n)=\sum\limits_{j=0}^k (-1)^{k-j}\binom{d-j}{k-j}\binom{n-j+1}{j}$.
\item[(2)] $\beta_k^d(I_n)=\binom{n-d+k-1}{k} - \sum\limits_{j=0}^k (-1)^{k-j}\binom{d-j}{k-j}\binom{n-j+1}{j}$.
\item[(3)] $\beta_k^d(I_n)=\sum\limits_{\ell=1}^{\left\lfloor \frac{k}{2} \right\rfloor} (-1)^{\ell-1} \sum\limits_{j=0}^k (-1)^{k-j}\binom{d-j}{k-j}\binom{n-j+1}{\ell}\binom{n-2\ell}{j-2\ell}$.
\end{enumerate}
\end{prop}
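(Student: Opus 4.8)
The plan is to establish the combinatorial identities in Proposition \ref{pp} by first computing the squarefree $f$-vector $(\alpha_j)$ of $S/I_n$ and of $I_n$, and then substituting into the definition \eqref{betak} of $\beta_k^d$. For part (1), I would first observe that a squarefree monomial $x_{i_1}\cdots x_{i_j}$ lies outside $I_n$ exactly when the set $\{i_1,\ldots,i_j\}$ contains no two consecutive integers, i.e.\ it is an independent set of the path $P_n$. The number of such $j$-subsets of $\{1,\ldots,n\}$ is the classical ``stars and bars'' count $\binom{n-j+1}{j}$, so $\alpha_j(S/I_n)=\binom{n-j+1}{j}$. Plugging this into $\beta_k^d(S/I_n)=\sum_{j=0}^k(-1)^{k-j}\binom{d-j}{k-j}\alpha_j(S/I_n)$ gives (1) immediately with no further work.

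For part (2), I would use that $\alpha_j(S)=\binom{n}{j}$ counts all squarefree monomials of degree $j$, and that $\alpha_j(I_n)=\alpha_j(S)-\alpha_j(S/I_n)=\binom{n}{j}-\binom{n-j+1}{j}$ since $S = I_n \oplus S/I_n$ as graded vector spaces in the squarefree strand. Substituting into \eqref{betak}, the term coming from $\binom{n}{j}$ should collapse: by the Vandermonde-type identity $\sum_{j=0}^k(-1)^{k-j}\binom{d-j}{k-j}\binom{n}{j}=\binom{n-d+k-1}{k}$ (which is a standard convolution, provable by generating functions via $(1+t)^n$ against $(1-t)^{-(d-k+1)}$, or by a short induction), we get the free term $\binom{n-d+k-1}{k}$, and the remaining sum is exactly $-\beta_k^d(S/I_n)$ as computed in (1). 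This yields (2). The only mildly technical point is pinning down that convolution identity with the correct indexing, but it is routine.

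For part (3), the idea is to give an alternative closed form for $\alpha_j(I_n)$ directly (rather than as a difference), by inclusion–exclusion over which edges of $P_n$ are ``used'' by the monomial. A squarefree monomial of degree $j$ lies in $I_n$ iff its support contains at least one edge $\{i,i+1\}$; counting supports containing a chosen matching of $\ell$ disjoint edges and correcting by inclusion–exclusion on the number of disjoint edges gives $\alpha_j(I_n)=\sum_{\ell\geq 1}(-1)^{\ell-1}\binom{n-\ell}{\ell}\binom{n-2\ell}{j-2\ell}$, where $\binom{n-\ell}{\ell}$ counts the $\ell$-matchings in $P_n$ and $\binom{n-2\ell}{j-2\ell}$ counts how to complete to a $j$-set; the sum runs up to $\lfloor k/2\rfloor$ once we substitute into $\beta_k^d$ because terms with $2\ell>k$ contribute zero after the alternating sum over $j\le k$. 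Wait --- the statement writes $\binom{n-j+1}{\ell}$ rather than $\binom{n-\ell}{\ell}$, so I would need to check whether $\binom{n-\ell}{\ell}$ and $\binom{n-j+1}{\ell}$ agree in the relevant range or whether a reindexing (perhaps via \eqref{alfak}) is intended; resolving that discrepancy, and then substituting into \eqref{betak} and swapping the order of summation, is where the real care is needed.

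I expect the main obstacle to be exactly this bookkeeping in part (3): getting the matching-count in the right form, justifying the interchange of the sums over $\ell$ and $j$, and verifying that the upper limit truncates to $\lfloor k/2\rfloor$. Parts (1) and (2) are short once the two standard binomial identities (the ``no two consecutive'' count and the alternating Vandermonde convolution) are in hand, and I would present those identities as lemmas or cite them, rather than grinding through the algebra inline.
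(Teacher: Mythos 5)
Your treatment of parts (1) and (2) is essentially the paper's own proof: the paper also takes $\alpha_j(S/I_n)=\binom{n-j+1}{j}$ (quoting it from an earlier paper rather than re-deriving the ``no two consecutive indices'' count), sets $\alpha_j(I_n)=\binom{n}{j}-\binom{n-j+1}{j}$, and collapses the $\binom{n}{j}$ contribution using exactly the convolution $\sum_{j=0}^k(-1)^{k-j}\binom{d-j}{k-j}\binom{n}{j}=\binom{n-d+k-1}{k}$, which is displayed as equation \eqref{minune}. No issues there.

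Part (3) is where you have a genuine gap, and it is not just bookkeeping. The inclusion--exclusion over matchings that you propose is invalid as stated: a $j$-set containing an edge must be sieved over \emph{all} nonempty subsets of $E(P_n)$, and the subsets consisting of overlapping (non-disjoint) edges do not cancel out, so you cannot restrict to matchings. Concretely, your claimed formula $\alpha_j(I_n)=\sum_{\ell\geq 1}(-1)^{\ell-1}\binom{n-\ell}{\ell}\binom{n-2\ell}{j-2\ell}$ fails already for $n=6$, $j=4$: it gives $\binom{5}{1}\binom{4}{2}-\binom{4}{2}\binom{2}{0}=30-6=24$, whereas $\alpha_4(I_6)=\binom{6}{4}-\binom{3}{4}=15$. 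So the discrepancy you flagged between $\binom{n-\ell}{\ell}$ and $\binom{n-j+1}{\ell}$ is real and cannot be repaired by reindexing; the two quantities genuinely differ (e.g.\ $\binom{n-\ell}{\ell}$ versus $\binom{n-j+1}{\ell}$ at $n=6$, $j=4$, $\ell=1$ gives $5$ versus $3$), and it is the $\binom{n-j+1}{\ell}$ version that the statement requires.

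The paper's route avoids combinatorics on matchings entirely. It starts from part (2) and substitutes the purely algebraic binomial identity
$$\binom{n-j+1}{j}=\sum_{\ell=0}^{\left\lfloor j/2\right\rfloor}(-1)^{\ell}\binom{n-j+1}{\ell}\binom{n-2\ell}{j-2\ell},$$
then interchanges the sums over $j$ and $\ell$ (the upper limit becomes $\lfloor k/2\rfloor$ because the inner binomial $\binom{n-2\ell}{j-2\ell}$ vanishes unless $2\ell\leq j\leq k$), and finally observes that the $\ell=0$ term of the resulting double sum equals $\binom{n-d+k-1}{k}$ by \eqref{minune}, cancelling the free term from (2). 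If you want to keep a combinatorial flavour, you would need to prove that displayed identity (or an equivalent form of it) rather than the matching sieve; as it stands, your part (3) argument would produce a false intermediate formula.
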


\begin{proof}
(1) and (2) According to the case $m=2$ of \cite[Proposition 3.1]{lucrare4}, we have
that 
\begin{equation}\label{akin}
\alpha_k(S/I_n)=\binom{n-k+1}{k}\text{ and }\alpha_k(I_n)=\binom{n}{k}-\binom{n-k+1}{k}\text{ for all }0\leq k\leq n.
\end{equation}
In particular $\alpha_0(I_n)=\alpha_1(I_n)=0$. From \eqref{betak} and \eqref{akin} it follows that
\begin{equation}\label{bkin}
\begin{split}
\beta_k^d(I_n) = \sum_{j=0}^k (-1)^{k-j}\binom{d-j}{k-j}\binom{n-j+1}{j}\text{ and }\\
\beta_k^d(I_n) = \sum_{j=0}^k (-1)^{k-j}\binom{d-j}{k-j}\left( \binom{n}{j} - \binom{n-j+1}{j} \right)\text{ for all }0\leq j\leq d\leq n.
\end{split}
\end{equation}
The conclusion follows from \eqref{bkin} and the identity
\begin{equation}\label{minune}
\binom{n-d+k-1}{k} = \sum_{j=0}^k (-1)^{k-j}\binom{d-j}{k-j}\binom{n}{j}.
\end{equation}
(3) Using (2) and the identity
$$\binom{n-j+1}{j} = \sum_{\ell=0}^{\left\lfloor \frac{j}{2} \right\rfloor} (-1)^{\ell}\binom{n-j+1}{\ell}\binom{n-2\ell}{j-2\ell},$$
we deduce that
$$\beta_k^d(I_n) = \binom{n-d+k-1}{k} - \sum_{j=0}^k (-1)^{k-j}\binom{d-j}{k-j} \sum_{\ell=0}^{\left\lfloor \frac{j}{2} \right\rfloor} (-1)^{\ell}\binom{n-j+1}{\ell}\binom{n-2\ell}{j-2\ell}.$$
By changing the order of summation, we get
$$\beta_k^d(I_n) = \binom{n-d+k-1}{k} - \sum_{\ell=0}^{\left\lfloor \frac{k}{2} \right\rfloor} (-1)^{\ell} \sum_{j=0}^k (-1)^{k-j}\binom{d-j}{k-j}\binom{n-j+1}{\ell}\binom{n-2\ell}{j-2\ell}.$$
Taking $\ell=0$ we note that 
$$(-1)^{0}\sum_{j=0}^k (-1)^{k-j}\binom{d-j}{k-j}\binom{n-j+1}{0}\binom{n-2\cdot 0}{j-2\cdot 0}=\binom{n-d+k-1}{k}.$$ 
Hence, we get the required formula.
\end{proof}

\begin{teor}\label{cory}
We have that:
\begin{enumerate}
\item[(1)] $\hdepth(S/I_n)=\max\{d\;:\;\sum\limits_{j=0}^k (-1)^{k-j}\binom{d-j}{k-j}\binom{n-j+1}{j} \geq 0 \text{ for all }1\leq k\leq d\}$.
\item[(2)] $\hdepth(I_n)=\max\{d\;:\;\sum\limits_{j=0}^k (-1)^{k-j}\binom{d-j}{k-j}\binom{n-j+1}{j} \leq \binom{n-d+k-1}{k} \text{ for all }2\leq k\leq d\}$.
\end{enumerate}
\end{teor}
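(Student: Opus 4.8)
The proof is essentially immediate from Theorem \ref{d1} and Proposition \ref{pp}; the only point requiring attention is the trimming of the index range in the defining maximum.

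For $(1)$ the plan is to start from Theorem \ref{d1}, which gives
$$\hdepth(S/I_n)=\max\{d\;:\;\beta_k^d(S/I_n)\geq 0\text{ for all }0\leq k\leq d\},$$
and to substitute the closed formula for $\beta_k^d(S/I_n)$ from Proposition \ref{pp}(1). It then remains to observe that the constraint coming from $k=0$ is always satisfied: setting $k=0$ in that formula gives $\beta_0^d(S/I_n)=\binom{d}{0}\binom{n+1}{0}=1>0$. Hence the range $0\leq k\leq d$ may be replaced by $1\leq k\leq d$ without changing the maximum, which yields the stated expression.

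For $(2)$ I would argue in the same way, now using Proposition \ref{pp}(2): after rearranging that formula, the inequality $\beta_k^d(I_n)\geq 0$ is exactly
$$\sum_{j=0}^k (-1)^{k-j}\binom{d-j}{k-j}\binom{n-j+1}{j}\leq \binom{n-d+k-1}{k}.$$
Here one must check that the constraints for $k=0$ and $k=1$ hold automatically. This follows from $\alpha_0(I_n)=\alpha_1(I_n)=0$, recorded in \eqref{akin}: substituting this into the definition \eqref{betak} gives $\beta_0^d(I_n)=\beta_1^d(I_n)=0$, so both inequalities hold (in fact with equality). Dropping $k=0,1$ from the range $0\leq k\leq d$ leaves $2\leq k\leq d$, and the formula follows.

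There is no genuine obstacle here beyond this bookkeeping; if anything, the only subtlety is that Theorem \ref{d1} is phrased for a proper inclusion, so for $\hdepth(I_n)$ one reads it with $J=I_n$ and $I=(0)$, the degenerate but harmless case (alternatively, one may appeal directly to the positivity characterization $\hdepth(M)=\max\{d:(1-t)^dH_M(t)\text{ is positive}\}$ together with the Hilbert series computation underlying \eqref{akin}).
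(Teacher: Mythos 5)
Your proposal is correct and follows essentially the same route as the paper: substitute the formulas of Proposition \ref{pp} into Theorem \ref{d1} and discard the automatically satisfied constraints at $k=0$ (for $S/I_n$) and $k=0,1$ (for $I_n$), using $\alpha_0(I_n)=\alpha_1(I_n)=0$. The remark about reading Theorem \ref{d1} with $I=(0)$ is a fair observation but is exactly how the paper itself uses that result, so nothing further is needed.
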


\begin{proof}
(1) Since $\alpha_0(S/I_n)=1$ and $\alpha_1(S/I_n)=n$ it follows that $\beta_0^d(S/I_n)=1$ and $\beta_1^d(S/I_n)=n-d\geq 0$.
Hence, the result follows from Proposition \ref{pp}(1) and Thereom \ref{d1}.

(2) Since $\alpha_0(I_n)=\alpha_1(I_n)=0$, and thus $\beta_0(I_n)=\beta_1(I_n)=0$, $\beta_2(I_n)=\alpha_2(I_n)=n-1$, 
the result follows from Proposition \ref{pp}(2) and Thereom \ref{d1}.
\end{proof}

Based on our computer experiments, we propose the following Conjecture:

\begin{conj}\label{conj}
For all $n\geq 2$, we have that $$\hdepth(I_n)\geq \left\lfloor \frac{2n+1}{3} \right\rfloor.$$
\end{conj}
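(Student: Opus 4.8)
The plan is to establish the stronger bound $\hdepth(I_n)\ge d_n:=\left\lfloor\tfrac{2n+1}{3}\right\rfloor$ by induction on $n$, using the relation $d_n=d_{n-3}+2$. The base cases $n\le 6$ follow from Proposition \ref{pin}, since $\lceil\tfrac{n+1}{2}\rceil\ge d_n$ there; one may also check finitely many initial values directly from the formula in Theorem \ref{cory}(2). For the inductive step I would fix the variable $x_{n-1}$ and use
\begin{equation*}
0\longrightarrow (I_n:x_{n-1})\xrightarrow{\ \cdot\,x_{n-1}\ } I_n\longrightarrow I_n/x_{n-1}(I_n:x_{n-1})\longrightarrow 0,
\end{equation*}
so that Lemma \ref{uli} gives $\hdepth(I_n)\ge\min\{\hdepth(I_n:x_{n-1}),\ \hdepth(I_n/x_{n-1}(I_n:x_{n-1}))\}$, while on Hilbert series $H_{I_n}(t)=t\,H_{(I_n:x_{n-1})}(t)+H_{I_n/x_{n-1}(I_n:x_{n-1})}(t)$.

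A direct computation gives $(I_n:x_{n-1})=I_{n-3}+(x_{n-2},x_n)$, with $I_{n-3}$ the edge ideal of $P_{n-3}$ on $x_1,\dots,x_{n-3}$ and $x_{n-1}$ absent, whence
\begin{equation*}
H_{(I_n:x_{n-1})}(t)=\frac{t}{(1-t)^n}+\frac{t}{(1-t)^{n-1}}+\frac{1}{1-t}\,H_{I_{n-3}}(t).
\end{equation*}
On the other hand, Lemma \ref{oka} applied with $x_{n-1}$ --- which divides exactly the two generators $x_{n-2}x_{n-1}$ and $x_{n-1}x_n$ of $I_n$ --- identifies $I_n/x_{n-1}(I_n:x_{n-1})$ with the edge ideal $I_{n-2}$ of $P_{n-2}$ together with one extra free variable, so $\hdepth(I_n/x_{n-1}(I_n:x_{n-1}))=\hdepth(I_{n-2})+1$. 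Feeding the inductive hypothesis for $n-2$ and $n-3$ into Lemma \ref{uli}, one gets $\hdepth(I_n)\ge d_n-1$ unconditionally: indeed $\hdepth(I_{n-2})+1\ge d_{n-2}+1\ge d_n-1$, and $\hdepth(I_n:x_{n-1})\ge d_n-1$ reduces, via the displayed Hilbert series (whose first two summands are already $\succeq 0$), to $(1-t)^{d_n-2}H_{I_{n-3}}(t)\succeq 0$, i.e.\ to $\hdepth(I_{n-3})\ge d_{n-3}=d_n-2$. The whole difficulty is to gain the last unit.

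Gaining it amounts to proving
\begin{equation*}
\frac{t}{(1-t)^{\,n-d_n}}+\frac{t}{(1-t)^{\,n-d_n-1}}+(1-t)^{\,d_n-1}H_{I_{n-3}}(t)\ \succeq\ 0
\end{equation*}
coefficientwise (this yields $\hdepth(I_n:x_{n-1})\ge d_n$; for $n\not\equiv 1\pmod 3$ the branch $\hdepth(I_{n-2})+1$ is already $\ge d_n$, while for $n\equiv 1$ one needs in addition the analogous sharpening of that branch, or a direct treatment of the sum $t\,H_{(I_n:x_{n-1})}+H_{I_n/x_{n-1}(I_n:x_{n-1})}$). The obstruction is that $(1-t)^{d_n-1}H_{I_{n-3}}=(1-t)\cdot(1-t)^{d_{n-3}}H_{I_{n-3}}$, and multiplying a coefficientwise inequality by $(1-t)$ need not preserve it, so this summand may genuinely acquire negative coefficients; iterating short exact sequences on $(I_n:x_{n-1})$ only reproduces the same gap. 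The natural remedy is to strengthen the induction: alongside $\hdepth(I_m)\ge d_m$, carry quantitative control --- the location and magnitude --- of the first negative coefficient of $(1-t)^{d_m+1}H_{I_m}(t)$, sharp enough that the positive tails $\tfrac{t}{(1-t)^{n-d_n}}$ and $\tfrac{t}{(1-t)^{n-d_n-1}}$ provably absorb it. The recursion $g_n=\tfrac{g_{n-2}+t\,g_{n-3}}{1-t}$ for the quotient Hilbert series $g_m=H_{K[x_1,\dots,x_m]/I_m}(t)$ (with $H_{I_m}=\tfrac{1}{(1-t)^m}-g_m$) should let one propagate such a refined statement, and I expect that finding its exact form --- one that both implies the conjecture and survives the recursion --- is the main obstacle.

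A purely combinatorial alternative is to use Theorem \ref{cory}(2): it suffices to prove $\beta_k^{d_n}(I_n)\ge 0$ for $3\le k\le d_n$ (the cases $k\le 2$ being trivial, as $\beta_2^{d}(I_n)=n-1$), which by Proposition \ref{pp}(2) and \eqref{minune} is the inequality $\sum_{j=0}^k(-1)^{k-j}\binom{d_n-j}{k-j}\binom{n-j+1}{j}\le\binom{n-d_n+k-1}{k}$. Here the alternating signs defeat naive term-by-term estimates; one would instead isolate the $\ell=1$ summand in the expansion of Proposition \ref{pp}(3) as the dominant nonnegative term and bound the remaining inner sums over $j$, or else look for a combinatorial interpretation of $\beta_k^{d_n}(I_n)$. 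Either way the crux is the same delicate point: to quantify precisely how the first negativity in the relevant $(1-t)$-transform of the Hilbert series of $I_n$ is dominated.
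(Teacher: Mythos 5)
There is a genuine gap, and it is worth being explicit about its nature: the statement you are asked to prove is stated in the paper only as a \emph{conjecture} (verified computationally for $n\le 2000$ in Remark \ref{obsy}); the paper offers no proof, only the equivalent combinatorial reformulation in the corollary following Conjecture \ref{conj}. Your proposal does not close this gap either --- and you say so yourself. The reductions you carry out are sound: the base cases $n\le 6$ via Proposition \ref{pin}, the identity $d_n=d_{n-3}+2$, the computation $(I_n:x_{n-1})=I_{n-3}+(x_{n-2},x_n)$, the identification of $I_n/x_{n-1}(I_n:x_{n-1})$ with $I_{n-2}$ plus a free variable via Lemma \ref{oka}, and the resulting unconditional bound $\hdepth(I_n)\ge d_n-1$ all check out. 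But the ``last unit'' is the entire content of the conjecture. The positivity of
$$\frac{t}{(1-t)^{\,n-d_n}}+\frac{t}{(1-t)^{\,n-d_n-1}}+(1-t)^{\,d_n-1}H_{I_{n-3}}(t)$$
is not established, and as you correctly observe it does not follow from the inductive hypothesis, because $(1-t)^{d_n-1}H_{I_{n-3}}=(1-t)\cdot(1-t)^{d_{n-3}}H_{I_{n-3}}$ and multiplication by $(1-t)$ destroys coefficientwise nonnegativity. The proposed remedy --- a strengthened induction tracking the location and size of the first negative coefficient of $(1-t)^{d_m+1}H_{I_m}$ --- is a reasonable research direction, but it is named, not executed; no candidate strengthening is formulated, let alone shown to propagate through the recursion $g_n=(g_{n-2}+t\,g_{n-3})/(1-t)$.

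Your ``purely combinatorial alternative'' lands exactly on the inequality
$$\sum_{j=0}^k(-1)^{k-j}\binom{d_n-j}{k-j}\binom{n-j+1}{j}\le\binom{n-d_n+k-1}{k},\qquad 2\le k\le d_n,$$
which is precisely the paper's own equivalent restatement of the conjecture (item (2) of the corollary after Conjecture \ref{conj}), so no progress is made there beyond what the paper already records. In short: the framework is correct and the partial bound $\hdepth(I_n)\ge d_n-1$ is a genuine (and correctly argued) reduction, but the key analytic or combinatorial estimate is missing, and with it the proof. This should be presented as a reduction of the conjecture to a single positivity statement about $(I_n:x_{n-1})$, not as a proof.
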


As a direct consequence of Proposition \ref{cory}, we have the following:

\begin{cor}
For $n\geq 2$, the following are equivalent:
\begin{enumerate}
\item[(1)] Conjecture \ref{conj} holds for $n$.
\item[(2)] $\sum\limits_{j=0}^k (-1)^{k-j}\binom{\left\lfloor \frac{2n+1}{3} \right\rfloor-j}{k-j}\binom{n-j+1}{j} \leq \binom{\left\lfloor \frac{n+1}{3} \right\rfloor+k-1}{k}$ for all $2\leq k\leq \left\lfloor \frac{2n+1}{3} \right\rfloor$.
\end{enumerate}
\end{cor}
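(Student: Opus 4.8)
The plan is to obtain the corollary as a direct specialization of Theorem~\ref{cory}(2). Set $d_0:=\left\lfloor\frac{2n+1}{3}\right\rfloor$. By definition, Conjecture~\ref{conj} holds for $n$ if and only if $\hdepth(I_n)\geq d_0$, so the whole task reduces to showing that the inequality $\hdepth(I_n)\geq d_0$ is equivalent to the family of inequalities listed in part~(2).

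First I would record the ``$\geq$'' form of the combinatorial characterization: for every $d$ one has $\hdepth(I_n)\geq d$ if and only if $\beta_k^d(I_n)\geq 0$ for all $0\leq k\leq d$; and, since $\beta_0^d(I_n)=\beta_1^d(I_n)=0$ and $\beta_2^d(I_n)=n-1\geq 0$ (as noted in the proof of Theorem~\ref{cory}(2)), Proposition~\ref{pp}(2) shows that this is the same as
$$\sum_{j=0}^k (-1)^{k-j}\binom{d-j}{k-j}\binom{n-j+1}{j}\;\leq\;\binom{n-d+k-1}{k}\qquad\text{for all }2\leq k\leq d.$$
The only step here not already contained in Theorem~\ref{d1} is that the set $\{d:\beta_k^d(I_n)\geq 0\ \text{for all}\ 0\leq k\leq d\}$ is an initial segment of $\mathbb Z_{\geq 0}$; this is the standard downward monotonicity of Hilbert depth (if the power series $(1-t)^dH_{I_n}(t)$ has nonnegative coefficients, then so does $(1-t)^{d-1}H_{I_n}(t)$, obtained from it by multiplication by $1+t+t^2+\cdots$), and it can alternatively be deduced by induction on $k$ from the Pascal-type recursion $\beta_k^{d-1}(M)=\beta_k^d(M)+\beta_{k-1}^{d-1}(M)$, itself a consequence of $\binom{d-j}{k-j}=\binom{d-1-j}{k-j}+\binom{d-1-j}{k-1-j}$. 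Granting this, $\hdepth(I_n)\geq d_0$ is equivalent to the displayed inequality at $d=d_0$: the implication ``$\Leftarrow$'' is immediate (the inequalities at $d_0$ place $d_0$ in the maximum of Theorem~\ref{cory}(2)) and ``$\Rightarrow$'' is the initial-segment property.

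Finally I would specialize to $d=d_0$. A check of the three residue classes of $n$ modulo $3$ gives the identity $n-\left\lfloor\frac{2n+1}{3}\right\rfloor=\left\lfloor\frac{n+1}{3}\right\rfloor$, so $\binom{n-d_0+k-1}{k}=\binom{\left\lfloor\frac{n+1}{3}\right\rfloor+k-1}{k}$, and the index range $2\leq k\leq d_0$ is precisely $2\leq k\leq\left\lfloor\frac{2n+1}{3}\right\rfloor$; substituting into the displayed inequality turns it into condition~(2), which finishes the proof. I do not anticipate a genuine obstacle here: the corollary merely repackages Theorem~\ref{cory}(2) at the single value $d=d_0$, and the only auxiliary ingredients are the (classical) initial-segment property of the conditions $\beta_k^d\geq 0$ and the elementary floor identity above.
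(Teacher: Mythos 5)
Your proof is correct and follows the same route the paper intends: the corollary is stated there as a direct consequence of Theorem \ref{cory}(2) with no written argument, and you simply specialize that characterization to $d_0=\left\lfloor\frac{2n+1}{3}\right\rfloor$ using the (easily verified) identity $n-\left\lfloor\frac{2n+1}{3}\right\rfloor=\left\lfloor\frac{n+1}{3}\right\rfloor$. The one point you make explicit that the paper leaves tacit --- that $\{d:\beta_k^d(I_n)\geq 0\text{ for all }0\leq k\leq d\}$ is downward closed, so that $\hdepth(I_n)\geq d_0$ is equivalent to the condition holding \emph{at} $d_0$ --- is indeed needed for the forward implication, and your justification of it (via positivity of $(1-t)^dH_{I_n}(t)$, or the recursion $\beta_k^{d-1}=\beta_k^d+\beta_{k-1}^{d-1}$) is sound.
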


\begin{obs}\label{obsy}\rm
Using a C\texttt{++} code, we verify Conjecture \ref{conj} for $n\leq 2000$.

Moreover, we noted that, 
$\hdepth(S/I_n)=\left\lceil \frac{n}{3} \right\rceil$ for $2\leq n\leq 9$, $\hdepth(S/I_{10})=4$ and
$$\hdepth(S/I_n) \geq  \left\lceil \frac{n}{3} \right\rceil + \left\lfloor \frac{3n-1}{29} \right\rfloor - 1\text{ for }11\leq n\leq 1000.$$
Moreover, the bound seems to be sharp, that is $\hdepth(S/I_n)-\left\lceil \frac{n}{3} \right\rceil - \left\lfloor \frac{3n-1}{29} \right\rfloor +1 \in \{0,1\}$
for $10\leq n\leq 521$ and $\hdepth(S/I_n)-\left\lceil \frac{n}{3} \right\rceil - \left\lfloor \frac{3n-1}{29} \right\rfloor +1 \in \{0,1,2\}$ for
$522 \leq n \leq 1000$.

We noted also that
$$\hdepth(I_n)\geq \left\lfloor \frac{2n+1}{3} \right\rfloor + \left\lfloor \frac{2n-5}{17} \right\rfloor - 1\text{ for }11\leq n\leq 1000,$$
and this bound also seems to be sharp, as 
$\hdepth(I_n) - \left\lfloor \frac{2n+1}{3} \right\rfloor - \left\lfloor \frac{2n-5}{17} \right\rfloor + 1\in \{0,1\}$ for.
\end{obs}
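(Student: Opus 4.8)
The plan is to describe the exact computational procedure behind Remark \ref{obsy}, since the statement records the output of machine computations rather than a closed-form theorem. The engine is Theorem \ref{cory}, which reduces each of $\hdepth(S/I_n)$ and $\hdepth(I_n)$ to testing finitely many sign conditions on the integers $\beta_k^d$, all of which are alternating sums of products of binomial coefficients by Proposition \ref{pp}.

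First I would implement exact evaluation of $\beta_k^d(S/I_n)$ and $\beta_k^d(I_n)$ from the formulas in Proposition \ref{pp}. For $n$ up to $2000$ the binomials $\binom{n-j+1}{j}$ far exceed any machine word, so every operation must be carried out in exact (arbitrary-precision) integer arithmetic; floating point or fixed-width integers could corrupt the sign of an alternating sum. For each fixed $n$ I would compute $\hdepth(S/I_n)$ by testing, for each $d$ with $0\le d\le n$, whether $\beta_k^d(S/I_n)\ge 0$ for all $1\le k\le d$ as in Theorem \ref{cory}(1) (the cases $k=0,1$, giving $\beta_0^d=1$ and $\beta_1^d=n-d\ge 0$, hold automatically for $d\le n$), and retaining the largest $d$ that passes; since $\hdepth$ never exceeds $n$, this search is finite and exhaustive. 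The companion computation, testing $\beta_k^d(I_n)\ge 0$ for all $2\le k\le d$ by Theorem \ref{cory}(2), yields $\hdepth(I_n)$.

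With the exact values tabulated, the remaining assertions are verifications. To confirm Conjecture \ref{conj} for a given $n$ one simply compares the computed $\hdepth(I_n)$ against $\left\lfloor \frac{2n+1}{3}\right\rfloor$; running this for all $2\le n\le 2000$ settles the first claim. The small-$n$ equalities $\hdepth(S/I_n)=\left\lceil \frac n3\right\rceil$ for $2\le n\le 9$ and $\hdepth(S/I_{10})=4$ are read straight off the table. The empirical lower bounds, involving $\left\lfloor \frac{3n-1}{29}\right\rfloor$ and $\left\lfloor \frac{2n-5}{17}\right\rfloor$, are checked the same way on their stated ranges, and for the sharpness statements one records the difference between the computed $\hdepth$ and the proposed bound, verifying that it lies in the indicated small set $\{0,1\}$ or $\{0,1,2\}$.

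The main obstacle here is not a mathematical argument but the reliability of the computation: the two delicate points are guaranteeing that no overflow ever flips the sign of a $\beta_k^d$ (hence the insistence on exact integers) and confirming that the index range in each test is exhaustive, so that a spurious pass cannot arise from an un-checked $k$. I would emphasize that the two bound formulas and their sharpness are genuine experimental observations, asserted only on the finite ranges where they were verified and not claimed to persist for all $n$; in particular they are offered as evidence for, not a proof of, the general behaviour of $\hdepth(I_n)$ and $\hdepth(S/I_n)$.
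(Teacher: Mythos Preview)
Your proposal is appropriate: the statement is a remark recording the output of computer experiments, and the paper offers no argument beyond ``Using a C\texttt{++} code, we verify\ldots''; your description of the underlying procedure (evaluate the $\beta_k^d$ from Proposition~\ref{pp} in exact arithmetic, extract $\hdepth$ via the criteria of Theorem~\ref{cory}, then tabulate and compare) is exactly what such a verification entails. There is nothing further to compare.
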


\begin{dfn}
Suppose $n \geq 3$ and let $S=K[x_1,\ldots,x_n]$. The cycle graph $C_n$, of length $n$, is the graph on the
vertex set $V(C_n) = \{x_1,\ldots, x_n \}$ and the edge set $$E(C_n) = \{e_i=\{x_i, x_{i+1}\} :\;\text{ for }1 \leq i \leq  n-1\}\cup\{e_n=\{x_n,x_i\}\}.$$ 
The edge ideal of $C_n$ is $I(C_n) = (x_1x_2, x_2x_3, \ldots , x_{n-1}x_n,x_nx_1) \subset S.$
\end{dfn}

For convenience, we denote $J_n=I(C_n)$. We recall the following results: \pagebreak

\begin{prop}\label{pro2}
For any $n\geq 3$, we have that:
\begin{enumerate}
\item[(1)] $\depth(S/J_n)=\left\lceil \frac{n-1}{3} \right\rceil$. (\cite[Proposition 1.3]{mir2})
\item[(2)] $\sdepth(S/J_n)=\left\lceil \frac{n-1}{3} \right\rceil$ for $n\equiv 0(\bmod\;3)$ and $n\equiv 2(\bmod\;3)$. (\cite[Theorem 1.9]{mir2})
\item[(3)] $\left\lceil \frac{n-1}{3} \right\rceil \leq \sdepth(S/J_n)\leq \left\lceil \frac{n}{3} \right\rceil$ for $n\equiv 1(\bmod\;3)$. (\cite[Theorem 1.9]{mir2})
\end{enumerate}
\end{prop}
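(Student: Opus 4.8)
The plan is to pass, via a single short exact sequence, from $S/J_n$ to quotients of \emph{path} edge ideals, whose depth and Stanley depth are already recorded in Proposition \ref{pro}. Using the vertex $x_n$ of $C_n$, whose two incident edges are $x_{n-1}x_n$ and $x_nx_1$, one computes $(J_n,x_n)=(x_n)+(x_1x_2,\dots,x_{n-2}x_{n-1})$ and $(J_n:x_n)=(x_1,x_{n-1})+(x_2x_3,\dots,x_{n-3}x_{n-2})$, so that
$$S/(J_n,x_n)\cong K[x_1,\dots,x_{n-1}]/I_{n-1},\qquad S/(J_n:x_n)\cong \bigl(K[x_2,\dots,x_{n-2}]/I_{n-3}\bigr)[x_n].$$
By Proposition \ref{pro} and Lemma \ref{lem}(6), both of these have depth equal to their Stanley depth, namely $\lceil(n-1)/3\rceil$ for the first and $\lceil(n-3)/3\rceil+1=\lceil n/3\rceil$ for the second. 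I would then feed the exact sequence $0\to S/(J_n:x_n)\xrightarrow{\cdot x_n}S/J_n\to S/(J_n,x_n)\to 0$ into the Depth Lemma (Lemma \ref{lem1}(1)) and the Sdepth Lemma (Lemma \ref{asia}) to obtain, in one stroke,
$$\depth(S/J_n),\ \sdepth(S/J_n)\ \ge\ \min\{\lceil n/3\rceil,\lceil(n-1)/3\rceil\}=\lceil(n-1)/3\rceil.$$

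For the matching upper bounds I would use Lemma \ref{lem}: since $x_n\notin J_n$, parts (1) and (3) give $\sdepth(S/J_n)\le\sdepth\bigl(S/(J_n:x_n)\bigr)=\lceil n/3\rceil$ and $\depth(S/J_n)\le\depth\bigl(S/(J_n:x_n)\bigr)=\lceil n/3\rceil$. The argument then splits on $n\bmod 3$. If $n\equiv 0$ or $2\pmod 3$ then $\lceil n/3\rceil=\lceil(n-1)/3\rceil$, the upper and lower bounds coincide, and $\depth(S/J_n)=\sdepth(S/J_n)=\lceil(n-1)/3\rceil$; this is (1) in these residue classes and all of (2). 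If $n\equiv 1\pmod 3$, say $n=3k+1$, the bounds read $\lceil(n-1)/3\rceil=k$ and $\lceil n/3\rceil=k+1$, which is exactly the two-sided estimate (3) for the Stanley depth, and it leaves a single gap in (1): showing $\depth(S/J_n)\le k$.

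Closing that last gap is the only real obstacle, and the combinatorial bookkeeping above cannot do it — already on $n=4$ one checks by hand that every colon $S/(J_4:u)$ with $u\notin J_4$ has depth $>1=\depth(S/J_4)$, so no application of Lemma \ref{lem} forces $\depth(S/J_n)$ below $\lceil n/3\rceil$. Here I would invoke Hochster's formula: $\depth(S/J_n)\le k$ as soon as $\widetilde H_{k-1}(\Delta;K)\neq 0$, where $\Delta$ is the Stanley--Reisner complex of $J_n$, i.e.\ the independence complex $\mathrm{Ind}(C_n)$. Its homotopy type follows from the standard link--deletion recursion for independence complexes, $\mathrm{lk}_{x_1}\Delta=\mathrm{Ind}(P_{n-3})$ and $\mathrm{del}_{x_1}\Delta=\mathrm{Ind}(P_{n-1})$, together with the known (contractible, or single-sphere) homotopy types of $\mathrm{Ind}(P_m)$: for $n=3k+1$ the link $\mathrm{Ind}(P_{3k-2})$ is contractible, so attaching the cone $x_1\ast\mathrm{lk}_{x_1}\Delta$ does not change the homotopy type and $\mathrm{Ind}(C_n)\simeq\mathrm{Ind}(P_{n-1})=\mathrm{Ind}(P_{3k})\simeq S^{k-1}$, whence $\widetilde H_{k-1}\neq 0$ and $\depth(S/J_n)\le k$. (Equivalently one may read $\depth(S/J_n)=n-\mathrm{pd}(S/J_n)$ off the known minimal free resolution of the cycle edge ideal.) Assembling the pieces yields (1)--(3); the full details are in \cite{mir2}.
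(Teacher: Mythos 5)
Your argument is correct, but it is worth noting that the paper does not prove this proposition at all: it is recalled verbatim from \cite{mir2} (Proposition 1.3 and Theorem 1.9 there), so your proposal is a genuine reconstruction rather than a parallel of anything in the text. Your reconstruction is sound. The identifications $S/(J_n,x_n)\cong K[x_1,\dots,x_{n-1}]/I_{n-1}$ and $S/(J_n:x_n)\cong\bigl(K[x_2,\dots,x_{n-2}]/I_{n-3}\bigr)[x_n]$ are right, and feeding the exact sequence $0\to S/(J_n:x_n)\to S/J_n\to S/(J_n,x_n)\to 0$ into Lemmas \ref{lem1} and \ref{asia}, combined with the colon upper bounds from Lemma \ref{lem}(1),(3), does give everything except the depth upper bound $\depth(S/J_{3k+1})\le k$. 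You correctly diagnose that this last step cannot come from the colon-ideal machinery (your $n=4$ check is accurate: $S/J_4$ is the Stanley--Reisner ring of two disjoint edges, so its depth is $1$ by disconnectedness, while every $S/(J_4:u)$ has depth $2$), and your appeal to Reisner/Hochster via the homotopy type of $\mathrm{Ind}(C_n)$ is the standard way to close it: for $n=3k+1$ one has $\mathrm{Ind}(C_n)\simeq S^{k-1}$ (Kozlov), whence $H^k_{\mathfrak m}(S/J_n)\neq 0$ and $\depth(S/J_n)\le k$. The link--deletion computation you sketch, using that $\mathrm{Ind}(P_m)$ is contractible for $m\equiv 1\;(\bmod\;3)$ and a single sphere otherwise, is the usual proof of that homotopy equivalence. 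Note that \cite{mir2} obtains the same depth upper bound more algebraically (essentially via the projective dimension of $S/J_n$ and induction), but the topological route you chose is equally valid; only the small cases $n=3,4$ need a separate one-line check since $I_{n-3}$ is not covered by Proposition \ref{pro} there.
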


\begin{cor}\label{corcor}
For any $n\geq 3$ we have that $\hdepth(S/J_n)\geq \left\lceil \frac{n-1}{3} \right\rceil$.
\end{cor}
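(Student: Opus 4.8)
The plan is to imitate exactly the proof of the analogous statement for path graphs (the Corollary following Proposition \ref{pro}), which relied only on the inequalities $\hdepth\geq\sdepth$ and $\hdepth\geq\depth$ from Proposition \ref{p1} and Proposition \ref{p2}. Here the target lower bound $\left\lceil \frac{n-1}{3} \right\rceil$ is precisely $\depth(S/J_n)$ by Proposition \ref{pro2}(1), so one line suffices: invoke Proposition \ref{p2}, which gives $\hdepth(S/J_n)\geq\depth(S/J_n)$, and then substitute the value $\depth(S/J_n)=\left\lceil \frac{n-1}{3} \right\rceil$ from Proposition \ref{pro2}(1). I would write essentially:

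\begin{proof}
By Proposition \ref{p2} we have $\hdepth(S/J_n)\geq\depth(S/J_n)$, and by Proposition \ref{pro2}(1) we have $\depth(S/J_n)=\left\lceil \frac{n-1}{3} \right\rceil$, so the conclusion follows.
\end{proof}

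Alternatively, one could route through $\sdepth$: by Proposition \ref{p1}, $\hdepth(S/J_n)\geq\sdepth(S/J_n)$, and Proposition \ref{pro2}(2)--(3) give $\sdepth(S/J_n)\geq\left\lceil \frac{n-1}{3} \right\rceil$ in every residue class modulo $3$ (with equality in cases $0$ and $2$, and the lower bound in case $1$). I would still prefer the $\depth$ route, since Proposition \ref{pro2}(1) delivers the exact value uniformly in $n$ without a case split, making the argument cleaner. Either way the logical content is the same.

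There is no real obstacle here: the statement is a formal corollary of results already established in the excerpt, and the only thing to be careful about is citing the correct item numbers (Proposition \ref{p2} for the $\hdepth\geq\depth$ bound, Proposition \ref{pro2}(1) for the depth computation of $S/J_n$). One might also remark, as the authors did after Proposition \ref{p2}, that equality would hold if $S/J_n$ were Cohen--Macaulay, but for cycles this is not the case in general, so the corollary genuinely records only a lower bound; this remark is optional and not needed for the proof.
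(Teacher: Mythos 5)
Your proof is correct and matches the paper's own argument exactly: the authors also deduce the corollary from Proposition \ref{pro2}(1) together with Proposition \ref{p2}. Nothing further is needed.
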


\begin{proof}
It follows from Proposition \ref{pro2}(1) and Proposition \ref{p2}.
\end{proof}

\begin{prop}\label{pjn}
For any $n\geq 3$, we have that:
$$ \left\lceil \frac{n}{2} \right\rceil \leq \sdepth(J_n) \leq \left\lceil \frac{2n+2}{3} \right\rceil.$$
Also, $\hdepth(J_n)\geq \left\lceil \frac{n}{2} \right\rceil$.
\end{prop}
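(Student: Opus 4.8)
The plan is to mirror the structure of the proof of Proposition \ref{pin}, treating the two inequalities separately and using the colon-ideal trick from Lemma \ref{lem}(4) for the upper bound. For the lower bound $\sdepth(J_n)\geq \left\lceil \frac{n}{2}\right\rceil$, the quickest route is to observe that $J_n$ is minimally generated by $n$ monomials (one for each edge of the cycle), so Theorem \ref{okaz} gives $\sdepth(J_n)\geq n-\left\lfloor \frac{n}{2}\right\rfloor = \left\lceil \frac{n}{2}\right\rceil$; alternatively one could cite \cite[Corollary 1.5]{mir2} in the same way Proposition \ref{pin} does, if it covers the cycle case. Either way this step is essentially immediate.

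For the upper bound, I would split into residues of $n$ modulo $3$ exactly as in Proposition \ref{pin}. In each case choose a monomial $u$ built as a product of alternate variables around the cycle, chosen so that $(J_n:u)$ becomes a monomial prime (or near-prime) ideal whose generators are easy to count and so that the hypothesis $J_n = u(J_n:u)$ of Lemma \ref{lem}(4) holds. For instance, for $n=2k$ one expects a choice of $u$ giving $(J_n:u)$ prime with about $k$ linear generators, hence $\sdepth(J_n:u) = n - k$; feeding this through Lemma \ref{lem}(4)(c) and Lemma \ref{lem}(2) yields $\sdepth(J_n)\leq \sdepth(J_n:u)$. The arithmetic should land on $\left\lceil \frac{2n+2}{3}\right\rceil$ after checking the three cases $n\equiv 0,1,2\pmod 3$ and, within the even/odd split, computing $n - \lfloor m/2\rfloor$ where $m$ is the number of generators of the colon ideal.

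The only real subtlety is choosing $u$ correctly: because $C_n$ is a cycle rather than a path, the "alternating" product that works for $P_n$ may wrap around and either leave an edge $x_nx_1$ uncovered or over-cover a vertex, so the exact index set defining $u$ has to be adjusted depending on the parity of $n$ and its residue mod $3$, and one must verify in each case both that $J_n=u(J_n:u)$ and that $(J_n:u)$ has the claimed number of generators. I expect this bookkeeping — getting the wrap-around right and confirming the hypothesis of Lemma \ref{lem}(4) in the boundary case where $(J_n:u)$ is not quite prime but a complete intersection (so that Theorem \ref{shen} applies instead) — to be the main obstacle; the rest is routine binomial/floor arithmetic. Once the six sub-cases (three residues $\times$ two parities, several of which collapse) are handled, the two displayed inequalities follow, and the final assertion $\hdepth(J_n)\geq \left\lceil \frac{n}{2}\right\rceil$ is then immediate from Proposition \ref{p1}.
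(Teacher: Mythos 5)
Your proposal is correct and follows essentially the same route as the paper, which likewise gets the lower bound from the generator count (via \cite[Corollary 1.5]{mir2}, equivalent to your use of Theorem \ref{okaz} since $J_n$ has $n$ minimal generators), obtains the upper bound by repeating the colon-ideal argument of Proposition \ref{pin}, and deduces the Hilbert depth statement from Proposition \ref{p1}. The only bookkeeping worry you raise resolves trivially: the same $u=x_2x_5\cdots x_{3k-1}$ as in the path case works because the extra edge $x_nx_1$ lies in $(x_1)\subset (J_n:u)$, so the colon ideal is unchanged, and the step $\sdepth(J_n)\leq\sdepth(J_n:u)$ is really Lemma \ref{lem}(2) combined with Theorem \ref{shen} for the resulting complete intersection.
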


\begin{proof}
The first inequality follows from \cite[Corollary 1.5]{mir2}. The proof of the second inequality is similar to
the proof of Proposition \ref{pin}, so we omit it.

The last assertion follows from Proposition \ref{p1}.
\end{proof}

\begin{prop}\label{pp2}
For all $0\leq k\leq d$, we have that:
\begin{enumerate}
\item[(1)] $\beta_k^d(S/J_n)=\sum\limits_{j=0}^k (-1)^{k-j}\binom{d-j}{k-j}\binom{n-j}{j} + \sum\limits_{j=1}^{k} (-1)^{k-j}\binom{d-j}{k-j}\binom{n-j-1}{j-1}$.

\item[(2)] $\beta_k^d(J_n)=\binom{n-d+k-1}{k}-\sum\limits_{j=0}^k (-1)^{k-j}\binom{d-j}{k-j}\binom{n-j}{j}-
             \sum\limits_{j=1}^k (-1)^{k-j}\binom{d-j}{k-j}\binom{n-j-1}{j-1}$.
\end{enumerate}
\end{prop}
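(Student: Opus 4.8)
The plan is to follow the proof of Proposition \ref{pp} essentially verbatim, the only genuinely new ingredient being the enumeration of the squarefree monomials lying outside $J_n$ of each degree, equivalently the number of independent sets of each size in the cycle graph $C_n$. Concretely, I would first establish
$$\alpha_k(S/J_n)=\binom{n-k}{k}+\binom{n-k-1}{k-1}\quad\text{and}\quad\alpha_k(J_n)=\binom{n}{k}-\binom{n-k}{k}-\binom{n-k-1}{k-1}$$
for all $0\le k\le n$, with the convention $\binom{n-1}{-1}=0$ (so that $\alpha_0(S/J_n)=1$ and $\alpha_1(S/J_n)=n$, as expected, since the generators of $J_n$ have degree $2$). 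The second formula is immediate from the first, since a squarefree monomial of degree $k$ either lies in $J_n$ or does not, whence $\alpha_k(J_n)=\binom{n}{k}-\alpha_k(S/J_n)$.

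To prove the formula for $\alpha_k(S/J_n)$, I would partition the squarefree monomials $u\notin J_n$ of degree $k$ according to whether $x_n\mid u$. If $x_n\nmid u$, then $u$ is a squarefree monomial of degree $k$ in $x_1,\dots,x_{n-1}$ avoiding every generator $x_ix_{i+1}$ with $1\le i\le n-2$, i.e.\ a squarefree monomial of degree $k$ outside $I(P_{n-1})$; by \eqref{akin} there are $\binom{(n-1)-k+1}{k}=\binom{n-k}{k}$ of these. If $x_n\mid u$, then $x_1\nmid u$ and $x_{n-1}\nmid u$ because $x_nx_1,\,x_{n-1}x_n\in J_n$, so $u/x_n$ is a squarefree monomial of degree $k-1$ in $x_2,\dots,x_{n-2}$ outside $I(P_{n-3})$, of which there are $\binom{(n-3)-(k-1)+1}{k-1}=\binom{n-k-1}{k-1}$. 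Adding the two contributions gives the claim. (Equivalently, this is the classical count $\frac{n}{n-k}\binom{n-k}{k}$ of $k$-subsets of a cycle with no two elements adjacent, which could instead simply be quoted from the literature; one also checks the degenerate small-$n$ cases directly.)

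Finally, for part (1) I would substitute $\alpha_j(S/J_n)=\binom{n-j}{j}+\binom{n-j-1}{j-1}$ into the defining formula \eqref{betak}, split the resulting sum over the two binomial terms, and note that the $j=0$ summand of the second piece vanishes ($\binom{n-1}{-1}=0$), so that sum may be started at $j=1$; this yields exactly the stated expression. For part (2) I would use $\alpha_j(J_n)=\binom{n}{j}-\alpha_j(S/J_n)$ together with the identity \eqref{minune}, which gives $\sum_{j=0}^k(-1)^{k-j}\binom{d-j}{k-j}\binom{n}{j}=\binom{n-d+k-1}{k}$, and then subtract the two sums coming from $\alpha_j(S/J_n)$. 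All of this is bookkeeping identical in spirit to Proposition \ref{pp}; the only step carrying any content is the combinatorial count of independent sets in $C_n$, and since even that is standard, I anticipate no real obstacle.
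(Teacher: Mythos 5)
Your proof is correct and follows essentially the same route as the paper's: substitute the closed form of $\alpha_j(S/J_n)$ into \eqref{betak}, drop the vanishing $j=0$ term of the second sum, and obtain (2) from (1) via $\alpha_j(J_n)=\binom{n}{j}-\alpha_j(S/J_n)$ together with \eqref{minune}. The only difference is that the paper simply quotes $\alpha_j(S/J_n)=\binom{n-j}{j}+\binom{n-j-1}{j-1}$ from \cite[Proposition 4.3]{lucrare4}, whereas you rederive it by splitting on whether $x_n\mid u$ and reducing to the path counts of \eqref{akin} — a correct and self-contained alternative to the citation.
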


\begin{proof}
According to \cite[Proposition 4.3]{lucrare4}, we have that
$$\alpha_j(S/J_n)=\binom{n-j+1}{j}-\binom{n-j-1}{j-2} = \binom{n-j}{j}+\binom{n-j-1}{j-1},\;0\leq j\leq n.$$
Hence, from \eqref{betak} and \eqref{minune} it follows that
$$\beta_k(S/J_n)=\sum\limits_{j=0}^k (-1)^{k-j}\binom{d-j}{k-j}\binom{n-j}{j}+\sum\limits_{j=1}^k (-1)^{k-j}\binom{d-j}{k-j}\binom{n-j-1}{j-1}.$$
On the other hand, we have that
$$\sum\limits_{j=1}^k (-1)^{k-j}\binom{d-j}{k-j}\binom{n-j-1}{j-1} = - \sum_{j=0}^{k-1} (-1)^{k-j} \binom{d-1-j}{k-1-j}\binom{n-j-2}{j} $$
Thus, we proved (1). As $\alpha_j(J_n)=\binom{n}{j}-\alpha_j(S/J_n)$ and thus $$\beta_k^d(J_n)=\binom{n-d+k-1}{k}-\beta_k^d(S/J_n),$$ (2) follows from (1).
\end{proof}

\pagebreak

\begin{teor}\label{cory2}
We have that:\small
\begin{enumerate}
\item[(1)] $\hdepth(S/J_n)=\max\{d:\;\sum\limits_{j=0}^k (-1)^{k-j}\binom{d-j}{k-j}\left(\binom{n-j}{j}+\binom{n-j-1}{j-1}\right) \geq 0,\text{ for all }
1\leq k\leq d\}$.
\item[(2)] $\hdepth(J_n)=\max\{d:\;\sum\limits_{j=0}^k (-1)^{k-j}\binom{d-j}{k-j}\left(\binom{n-j}{j}+\binom{n-j-1}{j-1}\right)\leq \binom{n-d+k-1}{k},\; 2\leq k\leq d\}.$ \normalsize
\end{enumerate}
\end{teor}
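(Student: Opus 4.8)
The plan is to deduce Theorem \ref{cory2} directly from the combinatorial characterization of Hilbert depth in Theorem \ref{d1}, combined with the explicit formulas for $\beta_k^d$ computed in Proposition \ref{pp2}. The strategy mirrors the proof of Theorem \ref{cory}: the only nontrivial content is to verify that the inequalities $\beta_k^d \geq 0$ for the small values $k=0$ (and $k=1$ in part (1), or $k=1$ in part (2)) are automatically satisfied, so that they can be dropped from the defining condition, yielding the cleaner statement with the range $1 \leq k \leq d$ (resp. $2 \leq k \leq d$).

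For part (1), I would first record that $\alpha_0(S/J_n) = 1$ and $\alpha_1(S/J_n) = n$, which follow from the formula $\alpha_j(S/J_n) = \binom{n-j}{j} + \binom{n-j-1}{j-1}$ used in the proof of Proposition \ref{pp2} (evaluated at $j=0,1$). From the definition \eqref{betak}, this gives $\beta_0^d(S/J_n) = 1 > 0$ and $\beta_1^d(S/J_n) = \alpha_1 - d\alpha_0 = n - d$, which is $\geq 0$ precisely because $\hdepth(S/J_n) \leq \dim(S/J_n) \leq n$; hence the condition for $k=0,1$ is vacuous in the relevant range and by Theorem \ref{d1} together with Proposition \ref{pp2}(1), $\hdepth(S/J_n)$ equals the maximum $d$ for which $\beta_k^d(S/J_n) \geq 0$ holds for all $1 \leq k \leq d$, which is exactly the claimed expression. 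For part (2), I would instead note that $\alpha_0(J_n) = \alpha_1(J_n) = 0$ (there are no squarefree monomials of degree $0$ or $1$ in $J_n$, since $J_n$ is generated in degree $2$), so $\beta_0^d(J_n) = \beta_1^d(J_n) = 0 \geq 0$ automatically, and moreover $\alpha_2(J_n) = n$ (all $n$ edges), giving $\beta_2^d(J_n) = \alpha_2 = n \geq 0$ — wait, more carefully $\beta_2^d(J_n) = \alpha_2 - (d-1)\alpha_1 + \binom{d}{2}\alpha_0 = n$, still nonnegative — so in principle $k=2$ could also be dropped, but the statement keeps $2 \leq k \leq d$, which is still correct (it merely does not exploit the $k=2$ redundancy). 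Then Proposition \ref{pp2}(2) rewrites $\beta_k^d(J_n) \geq 0$ as $\sum_{j=0}^k (-1)^{k-j}\binom{d-j}{k-j}(\binom{n-j}{j} + \binom{n-j-1}{j-1}) \leq \binom{n-d+k-1}{k}$, and Theorem \ref{d1} finishes the argument.

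The only real verification required is the claim $\beta_1^d(S/J_n) = n - d \geq 0$ whenever $d \leq \hdepth(S/J_n)$; this is where one invokes $\hdepth \leq \dim$ from Proposition \ref{p2} (and $\dim(S/J_n) = n - \text{(minimal vertex cover size)} \leq n$, or more simply $\dim(S/J_n) \leq n$ since $S/J_n$ is a quotient of $S$). I do not anticipate a genuine obstacle here — everything reduces to substituting the already-established formulas for $\alpha_j$ and $\beta_k^d$ into Theorem \ref{d1} and checking the boundary cases $k \in \{0,1\}$ (and $k=2$ for the ideal) by hand. The proof will therefore be short, parallel in structure to that of Theorem \ref{cory}, and I would present it in essentially two sentences for each part, citing Proposition \ref{pp2} and Theorem \ref{d1}.
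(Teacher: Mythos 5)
Your proposal is correct and follows essentially the same route as the paper: both verify that the boundary cases $k=0,1$ are automatically nonnegative ($\beta_0^d(S/J_n)=1$, $\beta_1^d(S/J_n)=n-d\geq 0$, and $\beta_0^d(J_n)=\beta_1^d(J_n)=0$) and then apply Proposition \ref{pp2} together with Theorem \ref{d1}. The extra observations you make (the $k=2$ redundancy for $J_n$, the appeal to $\hdepth\leq\dim\leq n$) are harmless and consistent with the paper's argument.
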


\begin{proof}
(1) Since $\alpha_0(S/J_n)=1$ and $\alpha_1(S/J_n)=n$ it follows that $\beta_0^d(S/J_n)=1$ and $\beta_1^d(S/J_n)=n-d$.
Hence, the result follows from Proposition \ref{pp2}(1) and Thereom \ref{d1}.

(2) Since $\alpha_0(J_n)=\alpha_1(J_n)=0$, and thus $\beta_0(J_n)=\beta_1(J_n)=0$, $\beta_2(J_n)=\alpha_2(J_n)=n$, 
the result follows from Proposition \ref{pp2}(2) and Thereom \ref{d1}.
\end{proof}

Based on our computer experiments, we conjecture that:

\begin{conj}\label{conj2}
For all $n\geq 3$, it holds that:
\begin{enumerate}
\item[(1)] $\hdepth(S/I_n)-\hdepth(S/J_n) \in \{0,1\}$.
\item[(2)] $\hdepth(I_n)-\hdepth(J_n) \in \{0,1\}$.
\item[(3)] $\hdepth(J_n)\geq \left\lfloor \frac{2n}{3} \right\rfloor$.
\end{enumerate}
\end{conj}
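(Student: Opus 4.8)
We sketch a possible route to Conjecture \ref{conj2}; see the end for why a complete proof seems out of reach with current methods. All three parts rest on the combinatorial formula for $\hdepth$ in Theorem \ref{d1} together with the explicit expressions of Propositions \ref{pp} and \ref{pp2}. The starting point is a transfer identity between $P_n$ and $C_n$: counting the squarefree monomials of $J_n\setminus I_n$ — equivalently, the independent sets of $P_n$ that fail to be independent in $C_n$, namely those containing both $x_1$ and $x_n$ and hence avoiding $x_2$ and $x_{n-1}$ — gives $\alpha_k(J_n)=\alpha_k(I_n)+\alpha_{k-2}(S/I_{n-4})$ and $\alpha_k(S/I_n)=\alpha_k(S/J_n)+\alpha_{k-2}(S/I_{n-4})$ for all $k$ (the same identities also drop out of the binomial formulas of Propositions \ref{pp} and \ref{pp2} via $\binom{n-j+1}{j}-\binom{n-j}{j}-\binom{n-j-1}{j-1}=\binom{n-j-1}{j-2}$). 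Substituting into \eqref{betak} yields, for every $d$ and every $k\geq 2$,
\[
\beta_k^d(S/I_n)=\beta_k^d(S/J_n)+\beta_{k-2}^{d-2}(S/I_{n-4}),\qquad \beta_k^d(I_n)=\beta_k^d(J_n)-\beta_{k-2}^{d-2}(S/I_{n-4}),
\]
while $\beta_0^d$ and $\beta_1^d$ agree for $P_n$ and $C_n$. These identities, together with Theorem \ref{cory3} (which gives $\hdepth(J_n/I_n)=\hdepth(K[x_1,\dots,x_{n-4}]/I_{n-4})+2$ for $n\geq 6$), are the technical engine; the few cases $n<6$ are finite hand-checks.

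For the lower halves of (1) and (2), note that if $d\leq\hdepth(S/I_{n-4})+2$, hence $d-2\leq\hdepth(S/I_{n-4})$, then every correction term $\beta_{k-2}^{d-2}(S/I_{n-4})$ is nonnegative, so $\beta_k^d(S/I_n)\geq\beta_k^d(S/J_n)$ for all $k$, whence by Theorem \ref{d1}
\[
\hdepth(S/I_n)\ \geq\ \min\{\hdepth(S/J_n),\ \hdepth(S/I_{n-4})+2\}
\]
(the same bound also follows from Lemma \ref{uli} applied to $0\to J_n/I_n\to S/I_n\to S/J_n\to 0$ and Theorem \ref{cory3}). Thus $\hdepth(S/I_n)\geq\hdepth(S/J_n)$ would follow from the auxiliary estimate $\hdepth(S/J_n)\leq\hdepth(S/I_{n-4})+2$, which we would try to prove by induction on $n$ from the inequalities defining $\hdepth$ in Theorems \ref{cory} and \ref{cory2}. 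For the ideals the analogous estimate is \emph{false} (already $\hdepth(I_6)\geq\lceil 7/2\rceil=4>3=\hdepth(S/I_2)+2$), so $\hdepth(I_n)\geq\hdepth(J_n)$ does not follow formally; instead one must show directly that for $d=\hdepth(J_n)$ the value $\beta_k^d(J_n)$ dominates the correction $\beta_{k-2}^{d-2}(S/I_{n-4})$ for every $k$ — a genuine size comparison of alternating binomial sums.

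For the upper halves one cannot use short exact sequences, since Lemma \ref{uli} is one-directional: by Theorem \ref{d1} one must produce, for $d=\hdepth(S/J_n)+2$ (resp.\ $d=\hdepth(J_n)+2$), an index $k$ with $\beta_k^{d}(S/I_n)<0$ (resp.\ $\beta_k^d(I_n)<0$). Such a $k$ exists for the cycle, resp.\ the path, by the choice of $d$, and via the transfer identities the task becomes a sign-and-size statement: at the critical index $k=k(n)$ realizing $\hdepth(\cdot)+2$ — experimentally $k(n)$ lies near $d/2$ — the correction $\beta_{k-2}^{d-2}(S/I_{n-4})$ must not cancel the negativity inherited from $\beta_k^d(S/J_n)$, resp.\ $\beta_k^d(J_n)$. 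Finally, part (3) is tightly linked to (1), (2) and Conjecture \ref{conj}: for $n\equiv1\pmod 3$ it follows at once, since $\hdepth(J_n)\geq\hdepth(I_n)-1\geq\lfloor(2n+1)/3\rfloor-1=\lfloor 2n/3\rfloor$; for $n\equiv0,2\pmod 3$ one needs in addition the equality $\hdepth(I_n)=\hdepth(J_n)$ (predicted to hold for a proportion $\tfrac23$ of $n$; see Conjecture \ref{conj3}), and then $\hdepth(J_n)=\hdepth(I_n)\geq\lfloor(2n+1)/3\rfloor\geq\lfloor 2n/3\rfloor$.

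The main obstacle is common to every branch: a sufficiently precise description of the sign pattern of the alternating sums $\sum_{j=0}^k(-1)^{k-j}\binom{d-j}{k-j}\binom{n-j+1}{j}$ (and of their cycle analogues) in the range $d\approx\hdepth(I_n)$ — exactly the difficulty behind the still-open Conjecture \ref{conj} and behind the fact that the precise value of $\hdepth(S/I_n)$ is currently known only experimentally (Remark \ref{obsy}). Without a closed form or sharp asymptotics for these sums a complete proof of Conjecture \ref{conj2} seems unlikely; we would expect parts (1) and (2), being near-stability statements, to be more accessible than part (3).
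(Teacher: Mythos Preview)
The paper does not prove this statement: it is stated as a \emph{conjecture}, and the only evidence offered is the sentence ``We were able to verify Conjecture \ref{conj2} for $n\leq 1000$.'' There is therefore no proof in the paper to compare your proposal against.

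Your proposal is honest about this --- you call it a sketch and explicitly flag that a complete argument seems out of reach --- so in that sense there is no disagreement with the paper. Your transfer identities are correct and are essentially the content of the proof of Theorem~\ref{cory3} (indeed $\beta_k^d(J_n/I_n)=\beta_{k-2}^{d-2}(S/I_{n-4})$ there is exactly your correction term), and the Hdepth-Lemma bound $\hdepth(S/I_n)\geq\min\{\hdepth(S/J_n),\hdepth(J_n/I_n)\}$ you extract from the short exact sequence is a legitimate partial step. But, as you yourself say, the crucial ingredients --- the auxiliary inequality $\hdepth(S/J_n)\leq\hdepth(S/I_{n-4})+2$, the domination $\beta_k^d(J_n)\geq\beta_{k-2}^{d-2}(S/I_{n-4})$ at $d=\hdepth(J_n)$, and the sign control needed for the upper halves and for part~(3) --- remain unproved, and rest on precisely the same alternating-sum estimates that keep Conjecture~\ref{conj} open. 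So your write-up is a reasonable outline of where the difficulty lies, not a proof; and since the paper offers no proof either, that is the most one can say.
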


We were able to verify Conjecture \ref{conj2} for $n\leq 1000$. Moreover, we also propose the following:

\begin{conj}\label{conj3}
We have that:
\begin{align*}
& (1)\; \lim\limits_{N\to\infty} \frac{\# \{n\leq N\;:\;\hdepth(S/I_n) = \hdepth(S/J_n)\}}{N} = \frac{2}{3},\\
& (2)\; \lim\limits_{N\to\infty} \frac{\# \{n\leq N\;:\;\hdepth(I_n) = \hdepth(J_n)\}}{N} = \frac{5}{6}.
\end{align*}
\end{conj}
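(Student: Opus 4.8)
The plan is to convert the density statement into a precise asymptotic analysis of the four quantities $\hdepth(S/I_n)$, $\hdepth(S/J_n)$, $\hdepth(I_n)$, $\hdepth(J_n)$ through the combinatorial descriptions of Theorems \ref{cory} and \ref{cory2}. The starting point is the identity $\binom{n-j+1}{j}=\binom{n-j}{j}+\binom{n-j}{j-1}$, which together with Propositions \ref{pp} and \ref{pp2} gives
\[\beta_k^d(S/I_n)-\beta_k^d(S/J_n)=\sum_{j=0}^k(-1)^{k-j}\binom{d-j}{k-j}\binom{n-j-1}{j-2}=-\left(\beta_k^d(I_n)-\beta_k^d(J_n)\right);\]
thus, in each of the two comparisons, replacing the path by the cycle perturbs every coefficient $\beta_k^d$ by the same, explicitly known quantity, which by Conjecture \ref{conj2} (to be proved first; the authors have checked it for $n\le 1000$) shifts the maximal admissible $d$ in Theorem \ref{d1} by only $0$ or $1$. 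The problem thus becomes to count asymptotically the $n$ for which this perturbation is too small to change the Hilbert depth.

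Step two is to identify the controlling constraint. Experimentally, for $d$ just below $\hdepth(S/I_n)$ there is a single index $k=k^{\ast}(n)$, growing linearly in $n$, at which $\beta_k^d(S/I_n)$ is the first to turn negative as $d$ increases, every other inequality $\beta_k^d(S/I_n)\ge 0$ holding with a margin that itself grows with $n$; likewise for $I_n$, $S/J_n$, $J_n$. Establishing such a unimodality statement for $k\mapsto\beta_k^d(S/I_n)$ near the Hilbert depth, and showing that $k^{\ast}(n)$ and the resulting threshold are quasi-polynomial functions of $n$ (polynomial on each residue class of a fixed modulus), is the technical heart of the argument. Granting it, $\hdepth(S/I_n)$ is the largest $d$ with $\beta^d_{k^{\ast}(n)}(S/I_n)\ge 0$, $\hdepth(S/J_n)$ is the same with the correction term subtracted, and the two agree precisely when the value of $\beta^{d}_{k^{\ast}(n)}(S/I_n)$ at $d=\hdepth(S/I_n)$ is at least that of the correction term there; the $I_n$ versus $J_n$ comparison is governed by the same inequality with the two modules exchanged.

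Step three is the density count. Restricting $n$ to a fixed residue class modulo the relevant period — Remark \ref{obsy} suggests that moduli such as $29$ for $S/I_n$ and $17$ for $I_n$ enter the fine picture, although the leading behavior is controlled by $n\bmod 3$ — makes the threshold $d=\hdepth(S/I_n)$, the index $k^{\ast}(n)$, and the two competing quantities into explicit quasi-polynomials in $n$, so on each class the governing inequality holds for all large $n$, fails for all large $n$, or flips with a further fixed period. Hence $\{n:\hdepth(S/I_n)=\hdepth(S/J_n)\}$ is eventually periodic, its density is the rational proportion of ``good'' residues, and the remaining task is to verify that this proportion equals exactly $\tfrac23$, and likewise $\tfrac56$ in the $I_n$ versus $J_n$ case — for instance by showing the equality holds on exactly two of the three classes $n\equiv 0,1,2\pmod 3$ (respectively on five of six finer classes).

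The main obstacle is Step two. There is at present no closed form for $\hdepth(I_n)$ — even the bound $\hdepth(I_n)\ge\lfloor(2n+1)/3\rfloor$ of Conjecture \ref{conj} is open — and the phenomena of a single controlling index with a growing margin, and of quasi-polynomiality of the threshold, are exactly what is missing in general. Short of this, one can only argue heuristically that the correction term $\binom{n-j-1}{j-2}$ at the critical index acts like a bounded perturbation of a ``uniformly distributed'' value of $\beta^{d}_{k^{\ast}(n)}$, which is consistent with the observed failure rates $\tfrac13$ and $\tfrac16$ but is not a proof; once Step two is available, Step three is a finite, if intricate, verification over residue classes.
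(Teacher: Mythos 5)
The statement you are addressing is Conjecture \ref{conj3}; the paper offers no proof of it, only numerical evidence for $n\leq 1000$, so there is no argument of the authors to compare yours against. Your proposal is likewise not a proof, as you yourself concede in the final paragraph, and it is worth being precise about where it breaks down. Your Step one is correct but purely formal: from the values of $\alpha_j$ quoted in Propositions \ref{pp} and \ref{pp2} one indeed gets $\alpha_j(S/I_n)-\alpha_j(S/J_n)=\binom{n-j-1}{j-2}$ and hence $\beta_k^d(S/I_n)-\beta_k^d(S/J_n)=-\bigl(\beta_k^d(I_n)-\beta_k^d(J_n)\bigr)$, but this only relates the two comparisons to each other; it says nothing about how often the perturbation changes the maximal admissible $d$ in Theorem \ref{d1}. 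Everything of substance is deferred to Step two, where you assert without proof that (a) near the Hilbert depth there is a single controlling index $k^{*}(n)$ with every other constraint holding with a growing margin, (b) the threshold and $k^{*}(n)$ are quasi-polynomial in $n$, and (c) the set of $n$ on which the perturbation is absorbed is eventually periodic with exact densities $2/3$ and $5/6$. None of (a)--(c) is established, and (a) already presupposes a structural understanding of the sign pattern of $k\mapsto\beta_k^d$ that is not available even for $S/I_n$: the paper cannot yet prove the much weaker Conjecture \ref{conj} that $\hdepth(I_n)\geq\lfloor (2n+1)/3\rfloor$. You also invoke Conjecture \ref{conj2}, itself open, as an ingredient.

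A further concrete concern: even granting eventual periodicity, the limits $2/3$ and $5/6$ need not be proportions of residue classes modulo a \emph{fixed} modulus that you can exhibit; Remark \ref{obsy} suggests that moduli such as $29$ and $17$ enter the fine behaviour, and obtaining the value $5/6$ would require identifying a period divisible by $6$ and counting the good classes in it. So the "finite verification" promised in your Step three is not finite until the period is pinned down, which is exactly the missing content of Step two. In short, the proposal is a sensible research programme that correctly identifies the relevant combinatorial quantities, but it contains no proof of the conjectured limits, and the statement should remain labelled as a conjecture.
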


\begin{teor}\label{cory3}
For all $n\geq 6$ we have that 
$$\hdepth(J_n/I_n)=2+\hdepth(K[x_1,\ldots,x_{n-4}]/I_{n-4})\geq \left\lceil \frac{n+2}{3} \right\rceil.$$
\end{teor}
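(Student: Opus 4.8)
The plan is to analyze the quotient $J_n/I_n$ directly, since $J_n = I_n + (x_nx_1)$, so $J_n/I_n$ is generated as an $S$-module by the single monomial $x_nx_1$. The natural move is to use the isomorphism $J_n/I_n \cong (x_nx_1)/(I_n \cap (x_nx_1))$ and compute $I_n : (x_nx_1)$. Since $x_nx_1$ is divisible by $x_1$ and $x_n$, forming the colon removes the edges incident to $x_1$ and $x_n$, namely $x_1x_2$ and $x_{n-1}x_n$; what remains is $(x_2x_3,\ldots,x_{n-2}x_{n-1})$ together with $x_2$ (from $x_1x_2$) and $x_{n-1}$ (from $x_{n-1}x_n$). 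So $(I_n:x_nx_1) = (x_2, x_{n-1}) + I_n'$ where $I_n'$ is an edge-ideal of a path on the variables $x_3,\ldots,x_{n-2}$, i.e.\ a path of length $n-5$ on $n-4$ vertices. The key point is that $I_n = x_nx_1(I_n : x_nx_1)$ fails — actually one should check whether $x_nx_1 \cdot (I_n:x_nx_1) = I_n \cap (x_nx_1)$, which does hold — so $J_n/I_n \cong (I_n:x_nx_1)$-related structure, more precisely $J_n/I_n$ is a free module over the appropriate polynomial subring modulo $(x_2,x_{n-1})$ and the path ideal.

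Concretely, I expect $J_n/I_n \cong \bigl(S/((x_2,x_{n-1}) + I(P \text{ on } x_3,\dots,x_{n-2}))\bigr)$ as a $\mathbb{Z}^n$-graded module, up to the shift by $x_1x_n$ (which does not affect Hilbert depth). Killing the regular elements $x_2$ and $x_{n-1}$ and recalling that $x_1, x_n$ appear freely, we get that the Hilbert series of $J_n/I_n$ equals $t^2$ times the Hilbert series of $S'/I_{n-4}$ where $S' = K[x_1,\ldots,x_{n-4}]$ and $I_{n-4}$ is the path edge ideal on $n-4$ vertices, but with two extra free variables $x_1, x_n$ restored. Passing through Lemma \ref{lem}(6), each free variable contributes $+1$ to the Hilbert depth, and the monomial shift $x_1x_n$ is harmless. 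I would then assemble: $\hdepth(J_n/I_n) = \hdepth_{S''}(S''/I_{n-4}) + 2$ where the $+2$ comes from the two free variables $x_1,x_n$, and $S'' = K[x_1,\ldots,x_{n-4}]$ — matching the claimed formula $\hdepth(J_n/I_n) = 2 + \hdepth(K[x_1,\ldots,x_{n-4}]/I_{n-4})$.

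For the inequality $\hdepth(J_n/I_n) \geq \lceil (n+2)/3 \rceil$, I would invoke Proposition \ref{pro}(1) or the Corollary after it: $\hdepth(K[x_1,\ldots,x_{n-4}]/I_{n-4}) \geq \lceil (n-4)/3 \rceil$ (here $n - 4 \geq 2$ since $n \geq 6$), hence $\hdepth(J_n/I_n) \geq 2 + \lceil (n-4)/3 \rceil = \lceil (n-4)/3 + 2 \rceil = \lceil (n+2)/3 \rceil$, using that $2$ is an integer so it passes inside the ceiling.

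The main obstacle I anticipate is getting the colon-ideal bookkeeping exactly right and, crucially, justifying that the module structure really does decompose as a tensor product of $K[x_1,x_n]$ with $S''/I_{n-4}$ — i.e.\ that no interaction terms between the ``boundary'' variables and the interior path survive after taking the colon. One must be careful that $x_2$ and $x_{n-1}$ appear as \emph{pure powers} (degree-one generators) in $(I_n:x_nx_1)$ and hence are regular modulo the path ideal on the disjoint variable set $\{x_3,\ldots,x_{n-2}\}$, so that Lemma \ref{lem}(5) (or its Hilbert-depth analog via Hilbert series) applies cleanly twice. A secondary technical point is confirming $I_n = x_1x_n(I_n:x_1x_n) + (\text{stuff killed})$ in the precise form needed so that Theorem \ref{d1}/the Hilbert-series computation goes through; alternatively one can sidestep the module isomorphism entirely and just compute $\alpha_k(J_n/I_n)$ combinatorially (squarefree monomials in $J_n \setminus I_n$ of each degree), show it equals $\alpha_{k-2}$ of the relevant path quotient on $n-4$ vertices shifted by the two free variables, and then apply Theorem \ref{d1} directly — this combinatorial route may in fact be the cleanest and is the fallback if the module-theoretic argument gets delicate.
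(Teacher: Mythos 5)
Your argument is correct, and your primary route is genuinely different from the paper's. The paper proceeds purely combinatorially: it quotes the formula $\alpha_j(J_n/I_n)=\binom{n-j-1}{j-2}$ from an earlier paper, recognizes it as $\alpha_{j-2}\bigl(K[x_1,\ldots,x_{n-4}]/I_{n-4}\bigr)$, deduces $\beta_k^d(J_n/I_n)=\beta_{k-2}^{d-2}\bigl(K[x_1,\ldots,x_{n-4}]/I_{n-4}\bigr)$, and applies Theorem \ref{d1}; so your ``fallback'' is essentially the paper's proof with the citation replaced by the direct count of squarefree monomials in $J_n\setminus I_n$ (each such monomial is $x_1x_nw$ with $w$ a squarefree non-face of the path on $x_3,\ldots,x_{n-2}$). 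Your main route instead uses the module isomorphism $J_n/I_n\cong S/(I_n:x_1x_n)(-2)$ together with $(I_n:x_1x_n)=(x_2,x_{n-1})+(x_3x_4,\ldots,x_{n-3}x_{n-2})$, so that $S/(I_n:x_1x_n)\cong\bigl(K[x_3,\ldots,x_{n-2}]/I_{n-4}\bigr)[x_1,x_n]$; this is valid, and the two extra polynomial variables plus the shift-invariance of $\hdepth$ (immediate from the characterization $\hdepth(M)=\max\{r:(1-t)^rH_M(t)\text{ positive}\}$) give the $+2$. What the module route buys is independence from the external $\alpha_j$ formula; what it costs is the (easy but necessary) check that the combinatorial invariant of Theorem \ref{d1} agrees with the Hilbert-series $\hdepth$ and is therefore shift-invariant. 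Two small imprecisions worth fixing: in the combinatorial count the $+2$ comes from the degree shift $j\mapsto j-2$ and \emph{not} from free variables ($x_1$ and $x_n$ each occur exactly once in every squarefree monomial of $J_n\setminus I_n$, so no binomial factors $\binom{2}{i}$ appear), whereas in the Hilbert-series bookkeeping the $+2$ comes from the two free variables and the shift contributes nothing --- your phrase ``shifted by the two free variables'' conflates these two consistent but distinct accountings; and $x_2,x_{n-1}$ are not ``regular elements being killed'' but degree-one generators of the colon ideal, so Lemma \ref{lem}(5) is not the right tool --- the quotient simply omits those variables. The concluding inequality via $\hdepth\bigl(K[x_1,\ldots,x_{n-4}]/I_{n-4}\bigr)\geq\left\lceil\frac{n-4}{3}\right\rceil$ and $2+\left\lceil\frac{n-4}{3}\right\rceil=\left\lceil\frac{n+2}{3}\right\rceil$ is exactly what the paper intends.
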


\begin{proof}
According to \cite[Proposition 4.2]{lucrare4}, we have that $\alpha_j(J_n/I_n) = \binom{n-j-1}{j-2}$. Therefore,
it follows that $\alpha_0(J_n/I_n)=\alpha_1(J_n/I_n)=0=\alpha_{n-1}(J_n/I_n)=\alpha_{n}(J_n/I_n)$ and 
$$\alpha_j(J_n/I_n)=\binom{(n-4)-(j-2)+1}{j-2} = \alpha_{j-2}(K[x_1,\ldots,x_{n-4}]/I_{n-4})\text{ for }2\leq j\leq n-2.$$
From \eqref{betak} it follows that $\beta_0^d(J_n/I_n)=\beta_1^d(J_n/I_n)=0$ and 
$$\beta_k^d(J_n/I_n)=\beta_{k-2}^{d-2}(K[x_1,\ldots,x_{n-4}]/I_{n-4}).$$
Hence, the required conclusion follows from Theorem \ref{d1} and Corollary \ref{corcor}.
\end{proof}

Note that, according to \cite[Proposition 1.10]{mir2}, we have 
$$\sdepth(J_n/I_n)=\depth(J_n/I_n)=\left\lceil \frac{n+2}{3} \right\rceil.$$
Hence, the last inequality in Theorem \ref{cory3} follows also from the above and Proposition \ref{p1} or Proposition \ref{p2}.

\newpage
\section{Applications}

\subsection*{Star graphs}

\begin{dfn}
Let $S=K[x_1,\ldots,x_n]$ and $\overline S=S[y]$.
The \emph{star graph} $\St_n$ is the graph on the vertex set $V(\St_n)=\{x_1,\ldots,x_n,y\}$, with the edge set $E(\St_n)=\{\{x_i,y\}\;:\;1\leq i\leq n\}$.
The edge ideal of $\St_n$ is $$I(\St_n)=(x_1y,\ldots,x_ny)\subset \overline S.$$
\end{dfn}

\begin{obs}\rm
According to \cite[Theorem 2.9]{lucrare5}, $\hdepth(I(\St_n))=\left\lfloor \frac{n+3}{2} \right\rfloor$.
On the other hand, it seems to be very difficult to compute $\hdepth(\overline S/I(\St_n))$.
\end{obs}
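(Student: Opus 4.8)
The plan is to reprove the displayed identity $\hdepth(I(\St_n))=\left\lfloor\frac{n+3}{2}\right\rfloor$ — attributed to \cite[Theorem 2.9]{lucrare5} — from the combinatorial machinery developed in this paper. Work in $\overline S=S[y]$, a polynomial ring in $n+1$ variables. A squarefree monomial $u$ of degree $j$ lies in $I(\St_n)=(x_1y,\ldots,x_ny)$ exactly when it is divisible by some $x_iy$; since $u$ is squarefree this forces $y\mid u$ and $j\geq 2$, in which case $u=y\,x_{i_1}\cdots x_{i_{j-1}}$ for distinct indices. Hence $\alpha_0(I(\St_n))=\alpha_1(I(\St_n))=0$ and $\alpha_j(I(\St_n))=\binom{n}{j-1}$ for $2\leq j\leq n+1$.

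For the lower bound I would avoid any computation: $I(\St_n)$ is minimally generated by $n$ monomials inside a polynomial ring in $n+1$ variables, so Theorem~\ref{okaz} gives $\sdepth(I(\St_n))\geq (n+1)-\left\lfloor\frac n2\right\rfloor$, and a one-line check on the parity of $n$ identifies the right-hand side with $\left\lfloor\frac{n+3}{2}\right\rfloor$. Proposition~\ref{p1} then yields $\hdepth(I(\St_n))\geq\left\lfloor\frac{n+3}{2}\right\rfloor$.

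For the matching upper bound I would feed the counts $\alpha_j$ above into \eqref{betak}. Since $\alpha_0=\alpha_1=0$ one has $\beta_0^d(I(\St_n))=0$, and for $1\leq k\leq d$, after the substitution $j\mapsto j-1$ and an application of the identity \eqref{minune} (with parameters $k-1$, $d-1$ and the same $n$), one obtains
$$\beta_k^d(I(\St_n))=\binom{n-d+k-1}{k-1}-(-1)^{k-1}\binom{d-1}{k-1}.$$
Testing $k=3$ gives $\beta_3^d(I(\St_n))=\binom{n-d+2}{2}-\binom{d-1}{2}$, which is negative as soon as $n-d+2<d-1$, i.e. $d>\frac{n+3}{2}$. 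Taking $d=\left\lfloor\frac{n+3}{2}\right\rfloor+1$ (for $n\geq 2$ this satisfies $3\leq d\leq n+1$, so $k=3\leq d$ is a legitimate index and $n-d+2\geq 1$, so the binomials are ordinary ones) makes $\beta_3^d(I(\St_n))<0$, whence Theorem~\ref{d1} forces $\hdepth(I(\St_n))\leq\left\lfloor\frac{n+3}{2}\right\rfloor$; together with the previous paragraph this is the claimed equality. The case $n=1$ is immediate, since then $\hdepth$ is bounded by $\dim\overline S=2$.

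The one genuinely delicate point is the closed form for $\beta_k^d(I(\St_n))$: one must check that after reindexing the alternating sum is exactly an instance of \eqref{minune} in the smaller dimension, and that the shifted binomial $\binom{n-d+k-1}{k-1}$ causes no sign trouble, which is automatic in the range $d\leq n+1$ that matters. A more conceptual variant of the same computation: the formula above says $\beta_k^d(I(\St_n))=\beta_{k-1}^{d-1}(\mathfrak m)$ for the graded maximal ideal $\mathfrak m=(x_1,\ldots,x_n)\subset S$, so $\hdepth(I(\St_n))=\hdepth(\mathfrak m)+1$, and one invokes $\hdepth(\mathfrak m)=\lceil n/2\rceil$ (in turn a short consequence of $\sdepth(\mathfrak m)=\lceil n/2\rceil$ together with the $k=2$ term of the analogous formula for $\mathfrak m$); but the direct argument with $k=3$ is shorter and entirely self-contained within this paper.
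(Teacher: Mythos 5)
The paper offers no proof of this remark---it simply cites \cite[Theorem 2.9]{lucrare5}---whereas you derive the formula entirely from the machinery already set up in this paper, and your derivation is correct. Your lower bound is exactly Theorem \ref{okaz} applied in $\overline S$ ($n+1$ variables, $n$ generators) combined with Proposition \ref{p1}, and your count $\alpha_j(I(\St_n))=\binom{n}{j-1}$ for $j\ge 2$ together with the reindexed instance of \eqref{minune} (parameters $k-1$, $d-1$) does give $\beta_k^d(I(\St_n))=\binom{n-d+k-1}{k-1}-(-1)^{k-1}\binom{d-1}{k-1}$; this checks out on small cases (e.g.\ $n=2$: $\beta_3^3=-1$, $n=3$: $\beta_3^4=-3$). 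The one spot to tighten: Theorem \ref{d1} computes a maximum over $d$, so to conclude $\hdepth(I(\St_n))\le\left\lfloor\frac{n+3}{2}\right\rfloor$ you must exhibit a negative $\beta_k^d$ for \emph{every} $d>\left\lfloor\frac{n+3}{2}\right\rfloor$ with $d\le n+1$, not just for the single value $d=\left\lfloor\frac{n+3}{2}\right\rfloor+1$; your own criterion ($\beta_3^d<0$ whenever $3\le d\le n+1$ and $d>\frac{n+3}{2}$) already covers all such $d$, so this is purely a matter of phrasing, not a gap. What your route buys is self-containedness---no appeal to the external reference---plus, via the observation $\beta_k^d(I(\St_n))=\beta_{k-1}^{d-1}(\mathfrak m)$ with $\mathfrak m=(x_1,\ldots,x_n)\subset S$, a conceptual explanation of the answer as $\hdepth(\mathfrak m)+1$.
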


\begin{prop}
Let $I\subset S$ be a monomial ideal with $\depth(S/I)\geq 1$ and let $L=I+I(\St_n)\subset \overline S$. Then:
$$\dim(\oS/L)\geq \hdepth(\oS/L)\geq \sdepth(\oS/L)\geq \depth(\oS/L)=1.$$
\end{prop}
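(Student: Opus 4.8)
The plan is to compute $\depth(\oS/L)$ explicitly, show it equals $1$, and then read off the whole chain of inequalities from the general facts recalled in Section~2.

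First I would note that $\depth(S/I)\geq 1$ forces $S/I\neq 0$, so $I$ is a proper monomial ideal and hence $I\subseteq\mathfrak a:=(x_1,\dots,x_n)S$. Writing $L=I\oS+(x_1y,\dots,x_ny)$, a routine colon computation for monomial ideals — using that no generator of $I$ involves $y$, that $(x_iy:y)=(x_i)$, and that $I\subseteq\mathfrak a$ — gives
$$(L:y)=I\oS+\mathfrak a\oS=\mathfrak a\oS,\qquad (L,y)=I\oS+(y).$$
Consequently $\oS/(L:y)\cong\oS/\mathfrak a\oS\cong K[y]$, so $\depth\oS/(L:y)=1$, and $\oS/(L,y)\cong S/I$, so $\depth\oS/(L,y)=\depth(S/I)\geq 1$.

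Next I would feed the short exact sequence
$$0\longrightarrow \oS/(L:y)\ \xrightarrow{\ \cdot y\ }\ \oS/L\ \longrightarrow\ \oS/(L,y)\longrightarrow 0$$
into the Depth Lemma (Lemma~\ref{lem1}). Part~(1) gives $\depth(\oS/L)\geq\min\{1,\depth(S/I)\}=1$. Part~(2) gives $1=\depth\oS/(L:y)\geq\min\{\depth(\oS/L),\depth(S/I)+1\}$; since $\depth(S/I)+1\geq 2$, this forces $\depth(\oS/L)\leq 1$. Hence $\depth(\oS/L)=1$.

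Finally, because $\depth(\oS/L)=1\neq 0$, Lemma~\ref{lem7} shows $\mathfrak m\notin\Ass(\oS/L)$, hence $\sdepth(\oS/L)\geq 1=\depth(\oS/L)$. Combining this with Proposition~\ref{p1} (giving $\hdepth\geq\sdepth$) and Proposition~\ref{p2} (giving $\dim\geq\hdepth\geq\depth$) yields the full statement $\dim(\oS/L)\geq\hdepth(\oS/L)\geq\sdepth(\oS/L)\geq\depth(\oS/L)=1$. I do not anticipate a serious obstacle: the only point requiring a little care is the identity $(L:y)=\mathfrak a\oS$, and everything after that is a mechanical application of the Depth Lemma together with the cited results.
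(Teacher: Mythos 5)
Your proposal is correct and follows essentially the same route as the paper: compute $(L:y)=\me\oS$ and $(L,y)=(I,y)$, use the short exact sequence $0\to\oS/(L:y)\to\oS/L\to S/I\to 0$ with the Depth Lemma to get $\depth(\oS/L)=1$, and then invoke Propositions \ref{p1} and \ref{p2}. The only (harmless) difference is that you obtain $\sdepth(\oS/L)\geq 1$ by applying Lemma \ref{lem7} directly to $\oS/L$ after computing its depth, whereas the paper applies Lemma \ref{lem7} to $S/I$ and then uses the Sdepth Lemma \ref{asia} on the same exact sequence; both arguments are valid.
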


\begin{proof}
Note that, since $\depth(S/I)\geq 1$, from Lemma \ref{lem7} it follows that $\sdepth(S/I)\geq 1$.
Also, as $(L:y)=\me\oS$ and $(L,y)=(I,y)$, we have the short exact sequence
\begin{equation}
0\to \oS/\me\oS \cong K[y] \to \oS/L \to \oS/(L,y)\cong S/I \to 0.
\end{equation}
From Lemma \ref{lem1}, it follows that $\depth(\oS/L)=1$. Also, from Lemma \ref{asia}, it follows that 
$\sdepth(\oS/L)\geq 1$. Hence, by applying Proposition \ref{p2}, we get the required result.
\end{proof}

\subsection*{Generalized star graphs}


\begin{dfn}(See \cite[Definition 2.1]{ali})\label{gsg}
Assume that $k,n_1,\ldots,n_k$ are positive integers. Let $S:=K[y,x_{j,i}\;:\;1\leq j\leq n_i,\;1\leq i\leq k]$.
For each $1\leq i\leq k$ we consider the ideal 
$$I_i=(yx_{1,i},\; x_{1,i}x_{2,i},\; \ldots,x_{n_i-1,i}x_{n_i,i})\subset S_i:=K[y,x_{1,i},x_{2,i},\ldots,x_{n_i,i}].$$
Let $I=I_1S + I_2S +\cdots + I_kS \subset S$. A graph $G$ with the vertex sex
$$V(G) = \{y,\;x_{j,i}\;:\;1\leq j\leq n_i,\;1\leq i\leq k\},$$
and $I=I(G)$ is called a $(k;n_1,\ldots,n_k)$-star graph.
\end{dfn}

Note that, for each $1\leq i\leq k$, the ideal $I_i$ is the edge ideal of a path of length $n_i$.

We recall the following result:

\begin{teor}(See \cite[Theorem 2.6]{ali} and \cite[Theorem 2.7]{ali})\label{teo}

With the above notations, we have that:
\begin{enumerate}
\item[(1)] If $n_i\equiv 0\text{ or }2(\bmod\;3)$ for all $1\leq i\leq k$ then 
           $$\sum\limits_{i=1}^{k} \left\lceil \frac{n_i}{3} \right\rceil \leq \depth(S/I),\sdepth(S/I) \leq 
           \sum\limits_{i=1}^{k} \left\lceil \frac{n_i}{3} \right\rceil +1.$$
\item[(2)] If $n_i\equiv 1(\bmod\;3)$ for some $1\leq i\leq k$ then 
           $$\sdepth(S/I)=\depth(S/I)=1+\sum\limits_{i=1}^{k}\left\lceil \frac{n_i-1}{3} \right\rceil.$$
\end{enumerate}
\end{teor}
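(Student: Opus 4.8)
The plan is to peel off the central variable $y$ via a short exact sequence, evaluate the two end terms using the known formulas for path graphs (Proposition~\ref{pro}), and then obtain a matching upper bound from a minimum independent dominating set of the graph. Write $S=K[y,x_{j,i}]$ and let $G$ be the $(k;n_1,\dots,n_k)$-star graph, so $I=I(G)$. First I would record the short exact sequence
$$0\longrightarrow S/(I:y)\xrightarrow{\ \cdot y\ } S/I\longrightarrow S/(I,y)\longrightarrow 0,$$
and identify its end terms combinatorially. Inspecting the generators of $I$, one gets $(I,y)=(y)+\sum_{i=1}^k I(P_{n_i})$, where the paths $P_{n_i}\colon x_{1,i}-x_{2,i}-\cdots-x_{n_i,i}$ sit on pairwise disjoint variable sets; and $(I:y)=(x_{1,1},\dots,x_{1,k})+\sum_{i=1}^k I(Q_i)$, where $Q_i\colon x_{2,i}-\cdots-x_{n_i,i}$ is a path on $n_i-1$ vertices and the variable $y$ does not occur in $(I:y)$.

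Put $A:=\sum_{i=1}^k\lceil n_i/3\rceil$ and $B:=1+\sum_{i=1}^k\lceil (n_i-1)/3\rceil$. Since depth is additive over tensor products of $K$-algebras on disjoint variables, Proposition~\ref{pro} (plus the trivial cases $n\le 1$) gives $\depth(S/(I,y))=A$, and together with Lemma~\ref{lem}(6) for the free variable $y$ it gives $\depth(S/(I:y))=B$. For Stanley depth the sub-additivity $\sdepth(M\otimes_K N)\ge \sdepth M+\sdepth N$ on disjoint variables, combined with Proposition~\ref{pro}(2) and Lemma~\ref{lem}(6), gives $\sdepth(S/(I,y))\ge A$ and $\sdepth(S/(I:y))\ge B$. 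An elementary computation modulo $3$ shows $B=A+1$ when every $n_i\equiv 0$ or $2\pmod 3$, while $B\le A$ (with $B=A$ exactly when a single $n_i\equiv 1$) as soon as some $n_i\equiv 1\pmod 3$. Feeding this into the Depth Lemma~\ref{lem1}(1) and the Sdepth Lemma~\ref{asia} yields $\depth(S/I),\ \sdepth(S/I)\ge\min\{A,B\}$, i.e.\ $\ge A$ in case (1) and $\ge B$ in case (2).

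For the upper bound I would use independent domination. Let $D$ be a minimum independent dominating set of $G$ and set $u:=\prod_{v\in D}x_v$. Since $D$ is independent, $u\notin I$; since $D$ dominates $G$, one checks that $(I:u)=(x_w\;:\;w\in V(G)\setminus D)$, a monomial prime ideal. Hence $\depth(S/(I:u))=\sdepth(S/(I:u))=|D|$, and Lemma~\ref{lem}(1),(3) give $\depth(S/I),\ \sdepth(S/I)\le |D|=:i(G)$. It then remains to bound $i(G)$ from above: taking a minimum independent dominating set $D_i$ of the subpath $Q_i$ (of size $\lceil (n_i-1)/3\rceil$), the set $\{y\}\cup\bigcup_{i=1}^k D_i$ is an independent dominating set of $G$ of size exactly $B$, so $i(G)\le B$. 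Combining the two halves, in case (1) we obtain $A\le \depth(S/I),\ \sdepth(S/I)\le B=A+1$, and in case (2) we obtain $B\le \depth(S/I),\ \sdepth(S/I)\le i(G)\le B$, forcing equality with $B$.

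The step I expect to be the main obstacle is the identity $(I:u)=\mathfrak p_{V(G)\setminus D}$ for $u$ attached to an independent dominating set, together with the combinatorial bookkeeping needed to verify that $\{y\}\cup\bigcup_i D_i$ indeed realizes $i(G)\le B$ — in particular one must handle the degenerate branches $n_i=1$, which always fall into case (2). A secondary (but routine) point is to pin down precisely which additivity statements over disjoint variable sets are invoked: for $\depth$ this is classical, whereas for $\sdepth$ only the easy inequality ``$\ge$'', coming from the product of Stanley decompositions, is used.
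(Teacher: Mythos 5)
Your argument is correct, but there is nothing in the paper to compare it with: Theorem \ref{teo} is quoted from \cite{ali} with a citation and no proof, so what you have written is a self-contained derivation rather than an alternative to an internal argument. As such it checks out. The exact sequence $0\to S/(I:y)\to S/I\to S/(I,y)\to 0$ with the identifications $(I,y)=(y)+\sum_i I(P_{n_i})$ and $(I:y)=(x_{1,1},\ldots,x_{1,k})+\sum_i I(Q_i)$, combined with additivity of depth (and superadditivity of sdepth) over disjoint variable sets and Proposition \ref{pro}, does give the lower bound $\min\{A,B\}$ where $A=\sum_i\left\lceil \frac{n_i}{3}\right\rceil$ and $B=1+\sum_i\left\lceil \frac{n_i-1}{3}\right\rceil=A+1-\#\{i:n_i\equiv 1(\bmod\;3)\}$; this is $A$ in case (1) and $B$ in case (2), as needed. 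The upper bound via an independent dominating set $D$, the identity $(I:u)=(x_w:w\in V(G)\setminus D)$ (which requires both independence, so that no generator of $(I:u)$ becomes a unit, and domination, so that every $x_w$ with $w\notin D$ actually appears), and Lemma \ref{lem}(1),(3) yields $\depth(S/I),\sdepth(S/I)\leq i(G)\leq B$, which closes case (2) and gives the $+1$ slack in case (1). This colon-ideal technique is precisely the one the paper itself deploys in Proposition \ref{pin} and Theorems \ref{t4} and \ref{teor2}, and is essentially the method of \cite{ali}. The only points needing care are the ones you flag: the degenerate branches $n_i=1$ (where $Q_i$ is empty and Proposition \ref{pro} does not apply literally, but the contribution $\left\lceil \frac{n_i-1}{3}\right\rceil=0$ is still correct), and the fact that for Stanley depth only the inequality $\sdepth(M\otimes_K N)\geq\sdepth(M)+\sdepth(N)$ is available — which suffices, since only lower bounds on the two end terms are used.
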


From Theorem \ref{teo} and Proposition \ref{p1} or Proposition \ref{p2} we get the following result:

\begin{cor}
With the above notations, we have that:
\begin{enumerate}
\item[(1)] If $n_i\equiv 0\text{ or }2(\bmod\;3)$ for all $1\leq i\leq k$ then $\hdepth(S/I)\geq \sum\limits_{i=1}^{k} \left\lceil \frac{n_i}{3} \right\rceil$.
\item[(2)] If $n_i\equiv 1(\bmod\;3)$  for some $1\leq i\leq k$ then $\hdepth(S/I)\geq 1+\sum\limits_{i=1}^{k}\left\lceil \frac{n_i-1}{3} \right\rceil$.
\end{enumerate}
\end{cor}

If $u\in K[x_1,\ldots,x_n]$ is a monomial, the support of $u$ is $\supp(u)=\{x_i\;:\;x_i\mid u\}$.
We also denote $N=\sum_{i=1}^k n_i$. 

\begin{teor}\label{t4}
With the above notation, we have that 
$$1 + \left\lfloor \frac{\varepsilon}{2} \right\rfloor + \sum_{i=1}^k \left\lfloor \frac{2n_i}{3} \right\rfloor  \geq \sdepth(I)\geq \left\lceil \frac{N+k}{2} \right\rceil,$$
where $\varepsilon=\# \{i\;:\;n_i\equiv\;0,2(\bmod\;3)\}$. In particular, $\hdepth(I)\geq \left\lceil \frac{N+k}{2} \right\rceil$.
\end{teor}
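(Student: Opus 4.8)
The plan is to prove the two inequalities separately, both by induction on $k$ together with a careful analysis of the path-graph building blocks $I_i$. For the lower bound $\sdepth(I)\geq \lceil (N+k)/2\rceil$, I would first observe that $I$ is generated by $N+k$ monomials (each $I_i$ contributes $n_i$ edges among the $x_{\bullet,i}$ plus one edge $yx_{1,i}$, and the generating sets are disjoint except that they all involve $y$), living in a polynomial ring in $N+1$ variables. One cannot apply Theorem \ref{okaz} directly because $(N+1)-\lfloor (N+k)/2\rfloor$ is not the target bound; instead I would peel off the variable $y$ using Lemma \ref{lem}. Since $y$ divides exactly the $k$ generators $yx_{1,1},\ldots,yx_{1,k}$, Lemma \ref{oka} gives $I/y(I:y)\cong J\cap S'$ where $J$ is the edge ideal of the disjoint union of the paths $P_{n_1},\ldots,P_{n_k}$ on the $x$-variables and $S'=K[x_{\bullet,i}]$. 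The standard short exact sequence relating $\sdepth(I)$ to $\sdepth(I:y)$ and $\sdepth(I/y(I:y))$ (via Lemma \ref{asia}), combined with the fact that $\sdepth(I:y)$ is large because $(I:y)=\mathfrak m'\cdot(\text{something})$ — in fact $(I:y)=(x_{1,1},\ldots,x_{1,k})+\sum_i I_i'$ — should reduce the estimate to a sum of the path-graph bounds $\sdepth(I_{n_i})\geq \lceil (n_i+1)/2\rceil$ from Proposition \ref{pin}, and $\sum_i \lceil (n_i+1)/2\rceil \geq \lceil (N+k)/2\rceil$ by superadditivity of the ceiling.

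For the upper bound $\sdepth(I)\leq 1+\lfloor \varepsilon/2\rfloor + \sum_i \lfloor 2n_i/3\rfloor$, I would exhibit a single monomial $u$ with $u\notin I$ and $I=u(I:u)$, so that by Lemma \ref{lem}(4c) we get $\sdepth(I)=\sdepth(I:u)$, and then choose $u$ so that $(I:u)$ is a monomial prime (a "complete intersection of variables") whose $\sdepth$ is computed exactly by Theorem \ref{shen}. The natural candidate, mimicking the choice in Proposition \ref{pin}, is to take on each path-arm the monomial $u_i=x_{2,i}x_{5,i}\cdots$ hitting every third vertex. When $n_i\equiv 1\pmod 3$ one must be slightly careful near the center $y$; in that residue class the arm behaves like the $n\equiv 1$ case and contributes no "$\varepsilon$-bonus," whereas each arm with $n_i\equiv 0,2\pmod 3$ forces an extra variable of slack, and these pair up across arms to give the $\lfloor \varepsilon/2\rfloor$ term, plus one global "$+1$" coming from the hub. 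A direct count of the generators of the resulting prime $(I:u)$, followed by Theorem \ref{shen}, should yield exactly $1+\lfloor\varepsilon/2\rfloor+\sum_i\lfloor 2n_i/3\rfloor$. The final sentence, $\hdepth(I)\geq \lceil (N+k)/2\rceil$, is then immediate from Proposition \ref{p1}.

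I expect the main obstacle to be the upper bound, specifically the bookkeeping at the hub vertex $y$: unlike a path, where one endpoint is "free," here $k$ arms meet at $y$, and the interaction of the colon operation with $y$ and with the arm-monomials $u_i$ of differing residue classes is what produces the nonuniform correction $1+\lfloor\varepsilon/2\rfloor$. Getting the parity/residue case analysis right — i.e. verifying that $(I:u)$ really is a prime generated by exactly the claimed number of elements in all combinations of the $n_i\bmod 3$ — is the delicate step. A secondary technical point is checking $I=u(I:u)$ (equivalently, that $u$ is "squarefree-like" with respect to $I$ in the sense of Lemma \ref{lem}(4)); this should follow because $u$ is squarefree and its support meets each generator of $I$ in a controlled way, but it must be verified. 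The lower bound, by contrast, should be routine once the Lemma \ref{oka}/\ref{lem} reduction is set up, reducing cleanly to Proposition \ref{pin} arm-by-arm.
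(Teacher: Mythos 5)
Your outline shares the paper's skeleton (colon by the hub variable $y$, the exact sequence $0\to y(I:y)\to I\to I/y(I:y)\to 0$, Lemma \ref{oka} for the quotient, and a colon--plus--Shen argument for the upper bound), but both halves have genuine gaps. For the lower bound, your plan to reduce arm-by-arm to Proposition \ref{pin} and then invoke ``superadditivity'' is the problem: Stanley depth is not known to be superadditive over sums of ideals in disjoint sets of variables, and the short-exact-sequence estimates that are available only give min-type bounds (roughly $\sdepth(L_j)+\sum_{i\neq j}\sdepth(S_i/L_i)$), which fall strictly below $\left\lceil (N+k)/2\right\rceil$ once $k\geq 2$. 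The paper avoids this entirely: after the reduction, $(I:y)=\sum_i U_i$ has $N-k$ minimal generators in $N+1$ variables and $I/y(I:y)\cong\sum_i L_i\cap S'$ has $N-k$ minimal generators in $N$ variables, so a single application of Theorem \ref{okaz} to each piece, combined with Lemma \ref{asia}, gives $\geq\left\lceil (N+k)/2\right\rceil$ at once. You rejected Theorem \ref{okaz} on the basis of a miscount: $I$ has $N=\sum_i n_i$ minimal generators, not $N+k$ (each arm is a path on $n_i+1$ vertices including $y$, hence $n_i$ edges), and the relevant ideals after coloning by $y$ have only $N-k$ generators, which is exactly what makes Okazaki's bound land on the target.

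For the upper bound, the condition $I=u(I:u)$ that you single out as the step to verify is in fact false for the monomials in play (a degree-two generator of $I$ cannot be a multiple of $u\cdot m$ when $\deg u\geq 2$), and it is also unnecessary: Lemma \ref{lem}(2) already gives $\sdepth(I)\leq\sdepth(I:u)$ for any monomial $u\notin I$, which is the only direction an upper bound needs. The substantive content you defer --- that $(I:yu_1\cdots u_k)=\sum_i(U_i:u_i)$ is a complete intersection whose generator count, fed into Theorem \ref{shen}, produces exactly $1+\left\lfloor\varepsilon/2\right\rfloor+\sum_i\left\lfloor 2n_i/3\right\rfloor$ --- is precisely the delicate bookkeeping (the paper takes $u_i=x_{i,3}x_{i,6}\cdots x_{i,3\ell_i}$ with $\ell_i=\left\lfloor (n_i-1)/3\right\rfloor$, not your $x_{2,i}x_{5,i}\cdots$, and the residue-class count of $|G(U_i:u_i)|$ is where the $\varepsilon$ term comes from); asserting that it ``should yield exactly'' the stated bound is not a proof of it. The final sentence via Proposition \ref{p1} is fine.
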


\begin{proof}
We consider the short exact sequence
\begin{equation}\label{ek1}
0 \to y(I:y) \to I \to \frac{I}{y(I:y)} \to 0.
\end{equation}
We consider the ideals:
$$L_i=(x_{i,1}x_{i,2},\; x_{i,2}x_{i,3},\; \ldots,x_{i,n_i-1}x_{i,n_i}),\;U_i=(x_{i_1},\; x_{i,2}x_{i,3},\; \ldots,x_{i,n_i-1}x_{i,n_i})\text{ for }1\leq i\leq k.$$
Note that 
\begin{equation}\label{ek2}
(I:y)=\sum\limits_{i=1}^k U_i.
\end{equation} 
On the other hand, from Lemma \ref{oka} it follow that 
\begin{equation}\label{ek3}
\frac{I}{y(I:y)} \cong \sum\limits_{i=1}^k L_i\cap S',\text{ where }S'=K[x_{j,i}\;:\;1\leq j\leq n_i,\;1\leq i\leq k].
\end{equation}
From \eqref{ek1}, \eqref{ek2}, \eqref{ek3} and Theorem \ref{okaz} it follows that
$$\sdepth(I)\geq \min\{\sdepth(I:y),\sdepth(I/y(I:y))\} \geq \left\lceil \frac{N+k}{2} \right\rceil.$$
As in the proof of Proposition \ref{pin}, we let 
$$\ell_i:=\left\lfloor \frac{n_i-1}{3} \right\rfloor\text{ and }u_i=x_{i,3}x_{i,6}\cdots x_{i,3\ell_i}\in S\text{ for }1\leq i\leq k,$$
and we note that $(U_i:u_i)$ is minimally generated by $1+2\ell_i$ generators, 
if $n_i\equiv 0,2(\bmod\;3)$ or $2+2\ell_i$ if $n_i\equiv 1(\bmod\;3)$ generators.
Moreover, the monomials from $G(U_i:u_i)$ have disjoint supports.
Now, the conclusion follows from \eqref{ek2}, Lemma \ref{lem}(2), Theorem \ref{shen} and the fact that 
$n_i - 1 - \ell_i = \left\lfloor \frac{2n_i}{3} \right\rfloor,\text{ for all }1\leq i\leq k$.
\end{proof}

\subsection*{Double broom graphs}


\begin{dfn}(See \cite[Page 382]{anda})\label{dbg}
Let $n_1,n_2,n\geq 2$ be some integers. In the ring of polynomials $S:=K[x_1,\ldots,x_{n_1},y_1,\ldots,y_n,z_1,\ldots,z_{n_2}]$,
we consider the ideals:
\begin{align*}
& I_1=(x_1y_1,x_2y_1,\ldots,x_{n_1}y_1)\subset S_1=K[y_1,x_1,\ldots,x_{n_1}], \\
& I_2=(y_1y_2,y_2y_3,\ldots,y_{n-1}y_n)\subset S_2=K[y_1,y_2,\ldots,y_n], \\
& I_3=(y_nz_1,y_nz_2,\ldots,y_nz_{n_2})\subset S_3=K[y_n,z_1,\ldots,z_{n_2}].
\end{align*}
Also, we let $I=I_1S+I_2S+I_3S$. The double broom $P(n_1,n,n_2)$ is a graph on the vertex set
$$V=\{x_1,\ldots,x_{n_1},y_1,\ldots,y_n,z_1,\ldots,z_{n_2}\},$$
such that $I(P(n_1,n,n_2))=I$.

In particular, if $n=2$ then $P(n_1,2,n_2)$ is called a \emph{double star}.
\end{dfn}

\begin{teor}\label{teor2}
With the above notations, we have that $$\hdepth(S/I)\geq \sdepth(S/I)=\depth(S/I)=2+\left\lceil \frac{n-2}{3} \right\rceil.$$
\end{teor}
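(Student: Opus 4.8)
The plan is to compute $\depth(S/I)$ and $\sdepth(S/I)$ simultaneously, by decomposing $S/I$ along the vertex $y_1$ (the ``hub'' of the first broom) via the short exact sequence
\begin{equation}\label{planseq1}
0 \to \oS/(I:y_1) \xrightarrow{\;y_1\;} S/I \to S/(I,y_1) \to 0,
\end{equation}
and then possibly peeling off $y_n$ as well. First I would observe that $(I:y_1) = (x_1,\ldots,x_{n_1}) + I_2' + I_3 S$, where $I_2'=(y_2y_3,\ldots,y_{n-1}y_n)$ comes from dropping the edge $y_1y_2$; modding out the linear forms $x_1,\ldots,x_{n_1}$ identifies $S/(I:y_1)$ with $K[y_1]\otimes_K (T/(I_2'+I_3))$ where $T=K[y_2,\ldots,y_n,z_1,\ldots,z_{n_2}]$. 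The module $T/(I_2'+I_3)$ is the quotient by the edge ideal of a path on $y_2,\ldots,y_n$ with a broom of $n_2$ leaves attached at $y_n$ — i.e.\ essentially a ``single broom'', whose depth and sdepth should be computable by the same path-peeling technique (Proposition \ref{pro}, Theorem \ref{teo}, or the generalized-star results). On the other side, $S/(I,y_1)$ is $K[x_1,\ldots,x_{n_1}]\otimes_K (S_2'/\cdots)$ again reducing to a path with a broom, and the $x_i$'s contribute $n_1$ to both depth and sdepth via Lemma \ref{lem}(6).

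The key numerology I expect is this: the path $P_{n-2}$ on $y_2,\ldots,y_{n-1}$ (once $y_n$ is also handled) contributes $\lceil (n-2)/3\rceil$, and the two brooms each contribute their single hub vertex (value $1$), giving $2 + \lceil (n-2)/3\rceil$. Concretely, I would iterate: after \eqref{planseq1}, handle $y_n$ with the analogous sequence $0\to \oS'/(\cdot:y_n)\to \cdot \to \cdot/(\cdot,y_n)\to 0$, reducing the ``middle'' to the path ideal $I_{n-2}$ on $y_2,\ldots,y_{n-1}$ plus the free variables $z_1,\ldots,z_{n_2}$ and $x_1,\ldots,x_{n_1}$. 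Using $\depth(S/I_{n-2})=\sdepth(S/I_{n-2})=\lceil(n-2)/3\rceil$ (Proposition \ref{pro}), Lemma \ref{lem}(6) to add back the free variables, Lemma \ref{lem1} and Lemma \ref{asia} to combine the pieces, and Lemma \ref{lem7} to rule out depth/sdepth jumping when a term has depth $0$, I would show both the upper and lower bound $2+\lceil(n-2)/3\rceil$ match. The final inequality $\hdepth(S/I)\ge\sdepth(S/I)$ is just Proposition \ref{p1}.

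The main obstacle will be the bookkeeping when $n\equiv 1\pmod 3$, where for path and cycle ideals the sdepth of the ``$S/I$-type'' quotient is only known up to an interval (cf.\ Proposition \ref{pro2}(3)); here I need the exact value, so I must argue that the broom structure at the two ends pins it down — presumably because each broom forces one of the hub variables $y_1, y_n$ into every maximal Stanley space (they are the only variables adjacent to the $n_1$, resp.\ $n_2$, leaves), so the decomposition is rigid enough to compute $\sdepth$ exactly rather than just bound it. I would make this precise by exhibiting an explicit Stanley decomposition achieving $2+\lceil(n-2)/3\rceil$ (taking the known optimal decomposition of $S/I_{n-2}$, tensoring with $K[x_1,\ldots,x_{n_1},y_1]$ on one side and distributing the $z$'s and $y_n$ appropriately on the other) together with a matching upper bound from the associated primes, i.e.\ showing $\me\notin\Ass(S/I)$ after removing exactly the right set of variables so that Lemma \ref{lem7} gives the ceiling on $\depth$, hence on $\sdepth$ via a suitable localization/specialization argument.
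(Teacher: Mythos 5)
Your plan is essentially the paper's proof: the paper also uses the short exact sequence $0\to S/(I:y_1)\to S/I\to S/(I,y_1)\to 0$, identifies both ends with (free variables tensored with) the quotient by the single-broom ideal $L=(y_nz_1,\ldots,y_nz_{n_2},y_2y_3,\ldots,y_{n-1}y_n)$, then peels off $y_n$ the same way to land on the path ideal $I_{n-2}$ in $y_2,\ldots,y_{n-1}$, and combines the pieces with Lemma \ref{lem1}, Lemma \ref{asia} and Lemma \ref{lem}(6) together with $\depth(S/I_{n-2})=\sdepth(S/I_{n-2})=\lceil (n-2)/3\rceil$ from Proposition \ref{pro}. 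Two remarks. First, your worry about the case $n\equiv 1\pmod 3$ is a non-issue: the interval phenomenon of Proposition \ref{pro2}(3) concerns cycles, while everything here reduces to path ideals, for which Proposition \ref{pro} gives exact values for every $n$.

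Second, your upper-bound step is the one place that needs repair. Lemma \ref{lem7} only detects depth (or sdepth) equal to $0$ and cannot by itself ``give the ceiling'' $2+\lceil(n-2)/3\rceil$, and an explicit optimal Stanley decomposition is unnecessary. The clean argument, and the one the paper uses, is to compute
$(I:y_1y_n)=(x_1,\ldots,x_{n_1},z_1,\ldots,z_{n_2})+(y_2y_3,\ldots,y_{n-2}y_{n-1})$,
so that $S/(I:y_1y_n)\cong \bigl(K[y_2,\ldots,y_{n-1}]/I_{n-2}\bigr)[y_1,y_n]$ has depth and sdepth exactly $2+\lceil(n-2)/3\rceil$; then Lemma \ref{lem}(1) and (3) give $\sdepth(S/I)\le\sdepth(S/(I:y_1y_n))$ and $\depth(S/I)\le\depth(S/(I:y_1y_n))$. (Your associated-primes idea is salvageable, since any $P\in\Ass(S/I)$ is of the form $(I:u)$ and Lemma \ref{lem}(1) then bounds $\sdepth(S/I)$ by $\dim S/P$, but that is just the colon trick in disguise and requires exhibiting a sufficiently large minimal vertex cover.) With that substitution your outline matches the paper's proof.
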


\begin{proof}
It is easy to see that 
\begin{equation}\label{pisi}
(I:y_1y_n)=(x_1,x_2,\ldots,x_{n_1},z_1,z_2,\ldots,z_{n_2})+(y_2y_3,y_3y_2,\cdots,y_{n-2}y_{n-1}).
\end{equation}
From \eqref{pisi} it follows that
\begin{equation}\label{cat}
\frac{S}{(I:y_1y_n)}\cong \frac{S_2}{(y_2y_3,y_3y_2,\cdots,y_{n-2}y_{n-1})} \cong 
\frac{K[y_2,\ldots,y_{n-1}]}{(y_2y_3,y_3y_2,\cdots,y_{n-2}y_{n-1})}[y_1,y_n].
\end{equation}
From Lemma \ref{lem}, Proposition \ref{pro} and \eqref{cat} it follows that
$$\depth(S/I),\sdepth(S/I)\leq 2+\left\lceil \frac{n-2}{3} \right\rceil.$$
In order to prove the other inequalities, we
let $$L=(y_nz_1,\ldots,y_nz_{n_2},y_2y_3,y_3y_2,\cdots,y_{n-1}y_{n})\subset S':=K[y_1,\ldots,y_n,z_1,\ldots,z_{n_2}].$$
We note that
$(I:y_1) = (x_1,x_2,\ldots,x_{n_1})+LS\text{ and }(I,y_1)=(y_1)+LS.$
In particular, we have 
$$\frac{S}{(I:y_1)}\cong \frac{S'}{L} \text{ and }\frac{S}{(I,y_1)}\cong \frac{S'}{(y_1,L)}[x_1,\ldots,x_{n_1}].$$
Therefore, from Lemma \ref{lem} it follows that 
\begin{equation}\label{sica}
\begin{split}
&\depth(S/(I,y_1)) = \depth(S/(I:y_1))-n_1+1\text{ and }\\
& \sdepth(S/(I,y_1))=\depth(S/(I:y_1))-n_1+1.
\end{split}
\end{equation}
From the short exact sequence $0\to S/(I:y_1) \to S/I \to S/(I,y_1) \to 0$, \eqref{sica}, Lemma \ref{lem}, Lemma \ref{lem1} and
Lemma \ref{asia} it follows that
\begin{equation}
\begin{split}
& \depth(S/I)=\depth(S/(I:y_1))=\depth(S'/L)\text{ and }\\
& \sdepth(S/I)=\sdepth(S/(I:y_1))=\sdepth(S'/L).
\end{split}
\end{equation}
Similarly to \eqref{pisi} we note that 
$$(L:y_n)=(z_1,z_2,\ldots,z_{n_2})+(y_2y_3,y_3y_2,\cdots,y_{n-2}y_{n-1})\subset S'$$
Also, we note that $$(L,y_n)=(y_n)+(y_2y_3,y_3y_2,\cdots,y_{n-2}y_{n-1})\subset S'$$
Using Lemma \ref{lem}, Lemma \ref{lem1} and Lemma \ref{asia} it follows that
\begin{equation}\label{ciuciu}
\depth(S'/L)\geq \depth(S'/(L:y_n))\text{ and }\sdepth(S'/L)\geq \sdepth(S'/(L:y_n)).
\end{equation}
Since $(I:y_1y_n)=(L:y_n)S$, the conclusion follows from Lemma \ref{lem} and \eqref{ciuciu}.
\end{proof}

\begin{prop}\label{p10}
We have that:
\begin{enumerate}
\item[(1)] $\sdepth(I)\leq \left\lceil \frac{n_1+n_2}{2} \right\rceil + \left\lceil \frac{2n+1}{3} \right\rceil + 1$, if  $n \equiv 0,2(\bmod\;3)$.
\item[(2)] $\sdepth(I)\leq \left\lceil \frac{n_1+n_2}{2} \right\rceil + \left\lceil \frac{2n+1}{3} \right\rceil $, if  $n \equiv 1(\bmod\;3)$.
\item[(3)] $\hdepth(I)\geq \sdepth(I)\geq \left\lceil \frac{n_1+n_2+n+1}{2} \right\rceil$.
\end{enumerate}
\end{prop}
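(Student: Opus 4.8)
The proof splits into the two lower estimates of (3) and the upper estimates (1) and (2), and I would handle them by different means.

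For (3): the ambient ring $S$ has $n_1+n+n_2$ variables, and the $n_1$ generators of $I_1$, the $n-1$ generators of $I_2$ and the $n_2$ generators of $I_3$ together form a minimal system of generators of $I$ (no one of these squarefree monomials of degree $2$ divides another). Hence $|G(I)|=n_1+n_2+n-1$, and Theorem \ref{okaz} gives
$$\sdepth(I)\ \geq\ (n_1+n+n_2)-\left\lfloor\frac{n_1+n_2+n-1}{2}\right\rfloor\ =\ \left\lceil\frac{n_1+n_2+n+1}{2}\right\rceil,$$
while $\hdepth(I)\geq\sdepth(I)$ is Proposition \ref{p1}. This settles (3).

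For (1) and (2) I would use Lemma \ref{lem}(2): it suffices to exhibit a single monomial $u\notin I$ with $\sdepth(I:u)$ no larger than the claimed bound. I would take $u=y_1y_n\,u'$, where $u'$ is a squarefree monomial with support inside $\{y_3,y_4,\ldots,y_{n-2}\}$ which is an independent set of the subpath $y_3-y_4-\cdots-y_{n-2}$, chosen exactly as in the proof of Proposition \ref{pin} applied to that subpath (a path on $n-4$ vertices); for $n\le 5$ the subpath has at most one vertex and $u'=1$. Since $\{y_1,y_n\}\cup\supp(u')$ is an independent set of the double broom, $u\notin I$. A direct colon-ideal computation then shows that, for $n\geq 4$,
$$(I:y_1y_n)=(x_1,\ldots,x_{n_1},z_1,\ldots,z_{n_2},y_2,y_{n-1})+(y_3y_4,\,y_4y_5,\,\ldots,\,y_{n-3}y_{n-2}),$$
the second summand being a copy of the path ideal $I_{n-4}$ on the variables $y_3,\ldots,y_{n-2}$; and since $u'$ is coprime to the linear part,
$$(I:u)=(x_1,\ldots,x_{n_1},z_1,\ldots,z_{n_2},y_2,y_{n-1})+(I_{n-4}:u').$$
By the analysis in the proof of Proposition \ref{pin}, $(I_{n-4}:u')$ is a complete intersection monomial ideal whose generators involve only $y_3,\ldots,y_{n-2}$, so $(I:u)$ is again a complete intersection, with exactly $M:=n_1+n_2+2+|G(I_{n-4}:u')|$ minimal generators (for $n=3$ one gets $M=n_1+n_2+1$, since there $y_2=y_{n-1}$ and $u=y_1y_n$).

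At this point Theorem \ref{shen} gives $\sdepth(I:u)=(n_1+n+n_2)-\lfloor M/2\rfloor$, hence $\sdepth(I)\leq(n_1+n+n_2)-\lfloor M/2\rfloor$ by Lemma \ref{lem}(2), and $\hdepth(I)\geq\sdepth(I)$ by Proposition \ref{p1}. What remains is to check that this right-hand side is $\leq\lceil(n_1+n_2)/2\rceil+\lceil(2n+1)/3\rceil$ when $n\equiv 1\pmod 3$, and $\leq$ that quantity $+1$ when $n\equiv 0,2\pmod 3$. For this I would substitute the value of $|G(I_{n-4}:u')|$, which by the proof of Proposition \ref{pin} equals $2q$ if $n-4\in\{3q,3q+1\}$ and $2q+1$ if $n-4=3q+2$, and simplify the floor/ceiling expressions; the required inequality then comes out in each residue class of $n$ modulo $3$ (it is in fact an equality for $n\equiv 1$, and has a slack of $1$ or $2$ otherwise). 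I expect the main obstacle to be precisely this last three-way case analysis — the bookkeeping of floors and ceilings, together with the degenerate small cases $n=3,4,5$ where the interior path is empty or has a single edge; the remaining steps are routine uses of the colon-ideal recipe and of Theorems \ref{shen} and \ref{okaz} and Proposition \ref{p1}.
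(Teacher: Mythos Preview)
Your proposal is correct and follows essentially the same strategy as the paper: for (3) you both apply Theorem~\ref{okaz} to the $n_1+n_2+n-1$ minimal generators of $I$, and for (1)--(2) you both colon $I$ by a suitable product of $y$-variables containing $y_1y_n$ to obtain a monomial complete intersection and then invoke Theorem~\ref{shen} together with Lemma~\ref{lem}(2). The only differences are cosmetic: your description of $(I:y_1y_n)$ correctly includes the linear generators $y_2$ and $y_{n-1}$ (which \eqref{pisi} omits, though the explicit generator list in the paper's proof recovers them), and your $u'$ is the shifted pattern $y_4y_7\cdots$ coming from Proposition~\ref{pin} on the subpath rather than the paper's $y_3y_6\cdots y_{3k}$---both choices produce complete intersections of the same size and yield the stated bounds.
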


\begin{proof}
Let $k=\left\lfloor \frac{n-2}{3} \right\rfloor$. From \eqref{pisi} and similarly to the proof of Proposition \ref{pin} it follows that 
\begin{align*}
& (I:y_1y_ny_3y_6\cdots y_{3k}) = (x_1,x_2,\ldots,x_{n_1},z_1,z_2,\ldots,z_{n_2})+U,\text{ where }\\
& U=(y_2,y_4,y_5,y_7,\ldots,y_{3k-1},y_{3k+1})\text{ if }n=3k+2,\\
& U=(y_2,y_4,y_5,y_7,\ldots,y_{3k-1},y_{3k+1})\text{ if }n=3k+3,\\
& U=(y_2,y_4,y_5,y_7,\ldots,y_{3k-1},y_{3k+1},y_{3k+2}y_{3k+3})\text{ if }n=3k+4.
\end{align*}
Note that $ (I:y_1y_ny_3y_6\cdots y_{3k})$ is a monomial complete intersection generated by $n_1+n_2+2k$ monomials
if $n \equiv 0,2(\bmod\;3)$ or $n_1+n_2+2k+1$ if $n \equiv 1(\bmod\;3)$. Hence, we proved (1) and (2).

In order to prove (3), it is enough to note that $I$ is minimally generated by $n_1+n_2+n-1$ monomial and to apply Theorem \ref{okaz}.
\end{proof}

\subsection*{Double star graph}

\noindent\\
\vspace{10pt}
In the following, $I$ is the edge ideal of the double star graph $P(n_1,2,n_2)$. Let $N:=n_1+n_2+2$.

\begin{lema}\label{starl}
We have that $\alpha_0(I)=\alpha_1(I)=0$ and 
$$\alpha_j(I)=\binom{n_1}{j-1}+\binom{n_2}{j-1}+\binom{n_1+n_2}{j-2},\text{ for all }j\geq 2.$$
\end{lema}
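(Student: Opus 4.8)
The statement is purely combinatorial: I need to count squarefree monomials of each degree lying in $I = I(P(n_1,2,n_2))$, the edge ideal of the double star. Recall that for $n=2$ the middle path $I_2 = (y_1y_2)$ contributes a single edge $y_1y_2$, so the generators of $I$ are $x_iy_1$ ($1\le i\le n_1$), $y_1y_2$, and $y_2z_j$ ($1\le j\le n_2$). A squarefree monomial $u$ lies in $I$ iff its support contains the support of at least one generator, i.e.\ $\supp(u)$ contains some $\{x_i,y_1\}$, or $\{y_1,y_2\}$, or $\{y_2,z_j\}$. The degrees $0$ and $1$ clearly give $\alpha_0(I)=\alpha_1(I)=0$ since no single variable is divisible by a generator.

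For the main formula the plan is inclusion--exclusion on the three ``types'' of divisibility, or more cleanly a direct partition of the squarefree monomials of $I$ according to which of $y_1,y_2$ appear in the support. First I would split into cases: (a) $y_1 \mid u$ but $y_2 \nmid u$; then $u\in I$ forces some $x_i\mid u$, and the remaining variables $x_1,\dots,x_{n_1},z_1,\dots,z_{n_2}$ are free subject to at least one $x_i$ being present — but I must be careful, since if $y_2\nmid u$ the only way to be in $I$ is via some $x_iy_1$. Counting monomials of degree $j$ here: choose a nonempty subset of the $x$'s together with any subset of the $z$'s totalling $j-1$ variables. (b) $y_2 \mid u$ but $y_1 \nmid u$: symmetric, giving the $\binom{n_2}{j-1}$-type contribution. (c) both $y_1,y_2 \mid u$: then $u\in I$ automatically (divisible by $y_1y_2$), and the other $j-2$ variables are an arbitrary subset of the $n_1+n_2$ remaining $x$'s and $z$'s, giving $\binom{n_1+n_2}{j-2}$. (d) neither $y_1$ nor $y_2$ divides $u$: then $u$ cannot be in $I$, contributing $0$. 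Summing (a)+(b)+(c) should yield the claimed formula, once I reconcile the ``nonempty subset of $x$'s'' counts with the clean binomial $\binom{n_1}{j-1}$.

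The subtle point — and the main thing to get right — is case (a): the count of squarefree monomials of degree $j$ with $y_1$ in the support, $y_2$ not, and lying in $I$. Writing $u = y_1\cdot v$ with $v$ squarefree in the $x$'s and $z$'s of degree $j-1$, membership in $I$ requires $\supp(v)\cap\{x_1,\dots,x_{n_1}\}\ne\emptyset$. So the count is $\binom{n_1+n_2}{j-1} - \binom{n_2}{j-1}$ (all degree-$(j-1)$ subsets minus those avoiding every $x_i$). Similarly case (b) gives $\binom{n_1+n_2}{j-1}-\binom{n_1}{j-1}$. Adding (a)+(b)+(c):
\[
\left[\binom{n_1+n_2}{j-1}-\binom{n_2}{j-1}\right]+\left[\binom{n_1+n_2}{j-1}-\binom{n_1}{j-1}\right]+\binom{n_1+n_2}{j-2}.
\]
Hmm — this does not obviously match $\binom{n_1}{j-1}+\binom{n_2}{j-1}+\binom{n_1+n_2}{j-2}$, so I would recheck the case split: the discrepancy is $2\binom{n_1+n_2}{j-1} - 2\binom{n_1}{j-1}-2\binom{n_2}{j-1}$ versus the desired, which suggests the intended decomposition is instead the direct inclusion--exclusion $\alpha_j(I) = \alpha_j(S/J)^c$-style: count monomials divisible by $x_iy_1$ for some $i$ (that's $\binom{n_1}{?}\cdots$), plus those divisible by $y_2z_j$ for some $j$, plus those divisible by $y_1y_2$, minus overlaps. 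I expect the clean bookkeeping to come from: monomials divisible by $y_1y_2$ (there are $\binom{n_1+n_2}{j-2}$), together with monomials divisible by some $x_iy_1$ but not by $y_2$ (which is $\binom{n_1}{\ge 1}$ choices among $x$'s times $z$'s — and summing over the ``not $y_2$'' constraint collapses to $\binom{n_1}{j-1}$ only after a further internal inclusion--exclusion that cancels the $z$-freedom against something).

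**Where the work lies.** The honest obstacle is exactly this reconciliation: the naive ``partition by $\{y_1,y_2\}\cap\supp$'' gives an answer in terms of $\binom{n_1+n_2}{j-1}$, and turning it into the stated $\binom{n_1}{j-1}+\binom{n_2}{j-1}+\binom{n_1+n_2}{j-2}$ requires either a Vandermonde/Pascal manipulation or, more likely, realizing that the intended decomposition counts $u\in I$ via: (i) $u$ divisible by $y_1y_2$, contributing $\binom{n_1+n_2}{j-2}$; (ii) $u$ divisible by $x_iy_1$ for some $i$ with $y_2\notin\supp u$ — here the $z$'s are genuinely free, but the constraint ``$\exists i$'' combined with the free $z$'s means I should instead fix this as ``$y_1\in\supp u$, $y_2\notin\supp u$, and $u\notin (y_1)$ alone'', which after subtracting the all-$z$ case and re-expanding gives the $\binom{n_1}{j-1}$ term only in the limiting reading — I suspect the paper actually intends a cleaner three-way split where the $z$-variables in case (ii) are absorbed because the edge ideal structure forces them out. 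So the key step I would carefully write out is the precise case analysis establishing each of the three binomial terms, and I would double-check it against small values $n_1=n_2=2$, $j=2,3,4$ before trusting it; that numerical check is the quickest way to pin down the correct reading of the decomposition.
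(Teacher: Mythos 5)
Your case analysis is correct, and the discrepancy you flagged is real: the lemma as printed is false for $j\geq 3$. In your case (a) ($y_1\mid u$, $y_2\nmid u$) the $z$-variables are genuinely free once some $x_i$ divides $u$ --- the monomial $y_1x_1z_1$, for instance, is divisible by the generator $x_1y_1$ and hence lies in $I$ --- so that case contributes $\binom{n_1+n_2}{j-1}-\binom{n_2}{j-1}$, exactly as you computed, and symmetrically for case (b). The total is $2\binom{n_1+n_2}{j-1}-\binom{n_1}{j-1}-\binom{n_2}{j-1}+\binom{n_1+n_2}{j-2}$, which agrees with the stated formula only at $j=2$ (both give $n_1+n_2+1$, the number of generators). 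The numerical check you proposed settles the matter in your favour: for $n_1=n_2=2$ and $j=3$ there are $\binom{6}{3}=20$ squarefree cubics, of which $6$ have independent support, so $\alpha_3(I)=14$; your formula gives $14$, while the lemma gives $6$.

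The paper's own proof uses precisely your partition by $\supp(u)\cap\{y_1,y_2\}$, but in the first case it asserts $u=y_1w$ with $w\in K[x_1,\ldots,x_{n_1}]$ (and dually $w\in K[z_1,\ldots,z_{n_2}]$ in the second), thereby silently discarding every monomial such as $y_1x_1z_1$ whose support meets both ends of the broom without containing $y_2$. So the error you spent the second half of your write-up trying to ``reconcile'' is in the target statement, not in your count; you should commit to your computation rather than massage it toward the printed binomials. Note that the incorrect values of $\alpha_j(I)$ for $j\geq 3$ propagate into the formulas for $\beta_k^d(I)$ and $\beta_k^d(S/I)$ in Proposition \ref{p12}, which would have to be recomputed from the corrected $\alpha_j$.
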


\begin{proof}
Let $u\in I$ be a squarefree monomial of degree $j\leq 2$. We have three (disjoint) cases:
\begin{itemize}
\item $y_1\mid u$ and $y_2\nmid u$. Then $u=y_1w$ where $w\in K[x_1,\ldots,x_{n_1}]$ is a squarefree monomial of degre $j-1$.
\item $y_2\mid u$ and $y_1\nmid u$. Then $u=y_2w$ where $w\in K[z_1,\ldots,z_{n_2}]$ is a squarefree monomial of degre $j-1$.
\item $y_1y_2\mid u$. Then $u=y_1y_2w$ where $w\in K[x_1,\ldots,x_{n_1},z_1,\ldots,z_{n_2}]$ is a squarefree monomial of degre $j-2$.
\end{itemize}
The conclusion follows easily.
\end{proof}

\begin{prop}\label{p12}
We have that:
\begin{enumerate}
\item[(1)] For all $0\leq d\leq N$ we have $\beta_0^d(I)=\beta_1^d(I)=0$ and for all $2\leq k\leq d\leq N$ we have
           $$\beta_k^d(I)=\binom{n_1-d+k-1}{k-1}+\binom{n_2-d+k-1}{k-1}+\binom{n_1+n_2-d+k-1}{k-2} + 2\cdot(-1)^k\binom{d-1}{k-1}.$$
\item[(2)] For all $0\leq d\leq N$ we have $\beta_0^d(S/I)=1$, $\beta_1^d(S/I)=N-d$. Also, we have
           \begin{align*} 
					 \beta_k^d(S/I) &= \binom{n_1+n_2-d+k+1}{k}-\binom{n_1+n_2-d+k-1}{k-2}-\binom{n_1-d+k-1}{k-1}\\
					              &-\binom{n_2-d+k-1}{k-1}+ 2\cdot(-1)^{k-1}\binom{d-1}{k-1},\text{ for all }2\leq k\leq d\leq N.
           \end{align*}												
\end{enumerate}
\end{prop}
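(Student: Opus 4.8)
The plan is purely computational: feed the values of $\alpha_j(I)$ from Lemma \ref{starl} into the defining formula \eqref{betak} for $\beta_k^d$, and collapse the resulting alternating sums by repeated use of the identity \eqref{minune}, after suitable index shifts.

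For part (1), the cases $k=0,1$ are immediate from \eqref{betak}: $\beta_0^d(I)=\alpha_0(I)=0$ and $\beta_1^d(I)=-d\,\alpha_0(I)+\alpha_1(I)=0$. For $2\le k\le d$, I would observe that the closed form $\binom{n_1}{j-1}+\binom{n_2}{j-1}+\binom{n_1+n_2}{j-2}$ of Lemma \ref{starl} reproduces $\alpha_j(I)$ for $j=0$ and for $j\ge 2$, but at $j=1$ it takes the value $1+1+0=2$ instead of $\alpha_1(I)=0$. Hence
$$\beta_k^d(I)=\sum_{j=0}^k(-1)^{k-j}\binom{d-j}{k-j}\left(\binom{n_1}{j-1}+\binom{n_2}{j-1}+\binom{n_1+n_2}{j-2}\right)-2(-1)^{k-1}\binom{d-1}{k-1},$$
the last summand correcting the spurious $j=1$ term. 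I then split the main sum into three pieces. In the first, putting $i=j-1$ (the $j=0$ term vanishes since $\binom{n_1}{-1}=0$) turns it into $\sum_{i=0}^{k-1}(-1)^{(k-1)-i}\binom{(d-1)-i}{(k-1)-i}\binom{n_1}{i}$, which by \eqref{minune} equals $\binom{n_1-d+k-1}{k-1}$; the $n_2$-piece gives $\binom{n_2-d+k-1}{k-1}$ the same way; in the third piece, $i=j-2$ (the $j=0,1$ terms vanish) yields $\binom{n_1+n_2-d+k-1}{k-2}$ via \eqref{minune} with parameters $(n_1+n_2,d-2,k-2)$. Collecting the three contributions and using $-2(-1)^{k-1}=2(-1)^k$ gives exactly the stated formula.

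For part (2), again $\beta_0^d(S/I)=\alpha_0(S/I)=1$ and $\beta_1^d(S/I)=-d+\alpha_1(S/I)=N-d$, since $S$ has $N=n_1+n_2+2$ variables, none of which lies in $I$, so $\alpha_1(S/I)=N$. For $2\le k\le d$ I use $\alpha_j(S/I)=\binom{N}{j}-\alpha_j(I)$; by linearity of \eqref{betak} this gives $\beta_k^d(S/I)=\sum_{j=0}^k(-1)^{k-j}\binom{d-j}{k-j}\binom{N}{j}-\beta_k^d(I)$, and \eqref{minune} rewrites the first sum as $\binom{N-d+k-1}{k}=\binom{n_1+n_2-d+k+1}{k}$. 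Substituting the formula from part (1) and rewriting $-2(-1)^k=2(-1)^{k-1}$ produces the claimed expression.

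The whole argument is routine once set up; the only place that genuinely demands attention is the $j=1$ boundary correction in part (1), because the binomial identity of Lemma \ref{starl} is valid only for $j\ge 2$ and the summation range must not be extended without accounting for the discrepancy at $j=1$. A secondary, purely notational point is to keep track of the shifts $j\mapsto j-1$ and $j\mapsto j-2$ so that \eqref{minune} is invoked with the correct parameters.
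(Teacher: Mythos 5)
Your proposal is correct and follows essentially the same route as the paper: substitute the values of $\alpha_j$ from Lemma \ref{starl} into \eqref{betak}, split into three sums, shift indices, and apply \eqref{minune}, with part (2) reduced to part (1) via $\alpha_j(S/I)=\binom{N}{j}-\alpha_j(I)$. The only (cosmetic) difference is where the term $2(-1)^k\binom{d-1}{k-1}$ is accounted for — you extract it up front as a spurious $j=1$ contribution of the closed form, while the paper picks it up as the missing $j=0$ terms of the two shifted sums after applying \eqref{minune}; these are the same bookkeeping.
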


\begin{proof}
(1) Since $\alpha_0(I)=\alpha_1(I)=0$ it follows that $\beta_0^d(I)=\beta_1^d(I)=0$. Assume $d\geq 2$. From \eqref{betak} and Lemma \ref{starl} it follows that
\begin{equation}\label{ee1}
\beta_k^d(I) = \sum_{j=2}^{k} (-1)^{k-j} \binom{d-j}{k-j} \left( \binom{n_1}{j-1} + \binom{n_2}{j-1} + \binom{n_1+n_2}{j-2}\right).
\end{equation}
We have
$$\sum_{j=2}^{k} (-1)^{k-j} \binom{d-j}{k-j} \binom{n_1}{j-1} = \sum_{j=1}^{k-1} (-1)^{(k-1)-j} \binom{(d-1)-j}{(k-1)-j} \binom{n_1}{j}.$$
Hence, from \eqref{minune} it follows that 
\begin{equation}\label{ee2}
\sum_{j=2}^{k} (-1)^{k-j} \binom{d-j}{k-j} \binom{n_1}{j-1} = \binom{n_1-d+k-1}{k-1} + (-1)^k \binom{d-1}{k-1}.
\end{equation}
Similarly, we have
\begin{equation}\label{ee3}
\sum_{j=2}^{k} (-1)^{k-j} \binom{d-j}{k-j} \binom{n_2}{j-1} = \binom{n_2-d+k-1}{k-1} + (-1)^k \binom{d-1}{k-1}.
\end{equation}
On the other hand, we have
\begin{equation}\label{ee4}
\sum_{j=2}^{k} (-1)^{k-j} \binom{d-j}{k-j} \binom{n_1+n_2}{j-2} = \sum_{j=0}^{k-2} (-1)^{k-2-j} \binom{d-2-j}{k-2-j}\binom{n_1+n_2}{j}.
\end{equation}
The conclusion follows from \eqref{ee1}, \eqref{ee2}, \eqref{ee3}, \eqref{ee4} and \eqref{minune}.
\end{proof}

\begin{obs}\rm
Proposition \ref{p12} and Theorem \ref{d1} gives formulas for $\hdepth(I)$ and $\hdepth(S/I)$. 
From Theorem \ref{teor2} of Proposition \ref{p12} we note that $\hdepth(S/I)\geq 2$.
Also, from Proposition \ref{p10} we deduce that $\hdepth(I)\geq \left\lceil \frac{n_1+n_2+3}{2} \right\rceil$.
\end{obs}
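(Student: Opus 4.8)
The plan is to obtain all three assertions of the remark by specializing the results already proved for the double broom to the double star case $n=2$, where $N=n_1+n_2+2$. The first assertion requires no argument beyond combining two existing tools: Theorem \ref{d1} characterizes $\hdepth(I)$ (respectively $\hdepth(S/I)$) as the largest $d$ for which $\beta_k^d(I)\geq 0$ (respectively $\beta_k^d(S/I)\geq 0$) for all $0\leq k\leq d$, while Proposition \ref{p12} supplies closed-form expressions for these $\beta_k^d(I)$ and $\beta_k^d(S/I)$. Substituting the formulas of Proposition \ref{p12} into the max-characterization of Theorem \ref{d1} therefore yields explicit descriptions of both invariants, and this is all the first sentence claims.

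For the bound $\hdepth(S/I)\geq 2$ I would proceed in either of two ways. The quickest route is to invoke Theorem \ref{teor2}: the double star is precisely the case $n=2$ of the double broom, so $\left\lceil \frac{n-2}{3} \right\rceil=\left\lceil \frac{0}{3} \right\rceil=0$ and Theorem \ref{teor2} gives at once $\hdepth(S/I)\geq 2+0=2$. Alternatively, as a self-contained derivation through Proposition \ref{p12}, I would verify that $\beta_k^2(S/I)\geq 0$ for $k=0,1,2$ and then apply Theorem \ref{d1}. Here $\beta_0^2(S/I)=1$ and $\beta_1^2(S/I)=N-2=n_1+n_2\geq 0$ are immediate from Proposition \ref{p12}(2), while the same formula at $d=k=2$ collapses, after combining the binomial terms and the $-2$ correction, to
$$\beta_2^2(S/I)=\binom{n_1+n_2+1}{2}-(n_1+n_2)-1=\frac{(n_1+n_2-2)(n_1+n_2+1)}{2},$$
which is nonnegative because $n_1,n_2\geq 2$ forces $n_1+n_2\geq 4$.

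For the bound on $\hdepth(I)$ I would simply read it off from Proposition \ref{p10}(3), which states $\hdepth(I)\geq\left\lceil \frac{n_1+n_2+n+1}{2} \right\rceil$ for an arbitrary double broom. Setting $n=2$ turns the right-hand side into $\left\lceil \frac{n_1+n_2+3}{2} \right\rceil$, which is exactly the claimed inequality; no separate computation with the formulas of Proposition \ref{p12}(1) is needed.

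The only step involving genuine calculation is the evaluation of $\beta_2^2(S/I)$ in the second paragraph, and that is routine: it amounts to expanding $\binom{n_1+n_2+1}{2}$ and subtracting the three lower-degree binomial coefficients together with the $2\cdot(-1)^{k-1}\binom{d-1}{k-1}=-2$ term. Since both headline bounds can instead be deduced instantly from Theorem \ref{teor2} and Proposition \ref{p10} at $n=2$, I anticipate no real obstacle; the substance of the remark is precisely that these two general theorems already determine the double star case.
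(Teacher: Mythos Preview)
Your proposal is correct and follows exactly the reasoning the paper intends: the remark in the paper has no separate proof because it merely points to Theorem \ref{teor2} and Proposition \ref{p10} specialized at $n=2$, together with the observation that Proposition \ref{p12} feeds directly into Theorem \ref{d1}. Your optional explicit verification of $\beta_2^2(S/I)=\frac{(n_1+n_2-2)(n_1+n_2+1)}{2}\geq 0$ via Proposition \ref{p12}(2) is a legitimate alternative route (and is presumably what the paper has in mind by also citing Proposition \ref{p12} for the bound $\hdepth(S/I)\geq 2$), but it is not needed once Theorem \ref{teor2} is invoked.
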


\subsection*{Data availability}

Data sharing not applicable to this article as no data sets were generated or analyzed during the current study.

\subsection*{Conflict of interest}

The authors have no relevant financial or non-financial interests to disclose.


\begin{thebibliography}{9}

\bibitem{ali} A.\ Alipour, A.\ Tehranian, \emph{Depth and Stanley depth of edge ideals of star graphs}, 
             Int. J. Appl. Math. Stat. \textbf{56(4)} (2017), 63--69.

\bibitem{apel} J. Apel, \textit{On a conjecture of R. P. Stanley; Part II - Quotients Modulo Monomial Ideals}, 
J. Algebr. Comb. \textbf{17(1)} (2003), 57--74.


\bibitem{bruns} W.\ Bruns, C.\ Krattenthaler, J.\ Uliczka, \emph{Stanley decompositions and Hilbert depth
in the Koszul complex}, J. Commut. Algebra \textbf{2(3)} (2010), 327-357.

\bibitem{lucrare2} S.\ B\u al\u anescu, M.\ Cimpoea\c s, C.\ Krattenthaller, 
\emph{On the Hilbert depth of monomial ideals}, arXiv:2306.09450v4 (2024).

\bibitem{lucrare5} S.\ B\u al\u anescu, M.\ Cimpoea\c s, 
\emph{On the Hilbert depth of certain monomial ideals and applications}, arXiv:2306.11015v5 (2024).

\bibitem{lucrare3} S.\ B\u al\u anescu, M.\ Cimpoea\c s, \emph{Remarks on the Hilbert depth of squarefree monomial ideals},
arXiv:2310.12339v5 (2024).

\bibitem{lucrare4} S.\ B\u al\u anescu, M.\ Cimpoea\c s, \emph{Several combinatorial inequalities related to squarefree 
monomial ideals}, arXiv:2307.03018v3 (2024).

\bibitem{mir} M.\ Cimpoeas, \emph{Stanley depth of monomial ideals with small number of generators},
              Cent. Eur. J. Math. \textbf{7(3)} (2009), 629--634.

\bibitem{mirci} M.\ Cimpoea\c s, \emph{Several inequalities regarding Stanley depth}, Rom. J. Math. Comput. Sci. 
\textbf{2(1)} (2012), 28--40.
							
\bibitem{mir2} M.\ Cimpoeas, \emph{On the Stanley depth of edge ideals of line and cyclic graphs},
               Rom. J. Math. Comput. Sci. \textbf{5(1)} (2015), 70--75.




\bibitem{duval} A.\ M.\ Duval, B.\ Goeckneker, C.\ J.\ Klivans, J.\ L.\ Martine, \emph{A non-partitionable Cohen-Macaulay simplicial complex}, Adv. Math. \textbf{299} (2016), 381--395.

\bibitem{hvz} J.\ Herzog, M.\ Vladoiu, X.\ Zheng, \textit{How to compute the Stanley depth of a monomial ideal},
           J. Algebra \textbf{322(9)}, (2009), 3151--3169.
           
\bibitem{her} J.\ Herzog, \emph{A survey on Stanley depth}, In Monomial Ideals, Computations and Applications, Springer, 2013, 3--45.

\bibitem{mor} S.\ Morey, \emph{Depths of powers of the edge ideal of a tree}, Commun. Algebra \textbf{38(11)} (2010), 4042--4055.

\bibitem{okazaki} R.\ Okazaki, \textit{A lower bound of Stanley depth of monomial ideals}, J. Commut. Algebra \textbf{3(1)} (2011), 83--88.

\bibitem{anda} A.\ Olteanu, \emph{Edge ideals of squares of trees}, Osaka J. Math. \textbf{59} (2022), 369--386.

\bibitem{asia} A.\ Rauf, \textit{Depth and Stanley depth of multigraded modules}, Commun. Algebra, \textbf{38(2)}, (2010), 773--784.

\bibitem{shen} Y.\ Shen, \textit{Stanley depth of complete intersection monomial ideals and upper-discrete partitions}, 
               J. Algebra \textbf{321} (2009), 1285--1292.

\bibitem{stan} R.\ P.\ Stanley, \emph{Linear Diophantine equations and local cohomology}, Invent. Math. \textbf{68} (1982), 175--193. 

\bibitem{stef} A.\ \c Stefan, \emph{Stanley depth of powers of the path ideal}, Sci. Bull., Ser. A, Appl. Math. Phys., Politeh. Univ. Buchar. \textbf{85(2)} 
               (2023), 69--76.

   
\bibitem{real} R.\ H.\ Villarreal, \emph{Monomial algebras. Second edition}, Monographs and Textbooks in Pure and
Applied Mathematics, Chapman \& Hall, New York, 2018.

\bibitem{uli} J.\ Uliczka, \emph{Remarks on Hilbert series of graded modules over polynomial rings}, Manuscr. Math. \textbf{132} (2010),
              159--168.
	
\end{thebibliography}
\end{document}